\newtheorem{theorem}{Theorem}
\newtheorem{lemma}{Lemma}
\newtheorem{corollary}{Corollary}
\newtheorem{observation}{Observation}
\theoremstyle{definition}
\newtheorem{definition}{Definition}
\newtheorem{question}{Question}
\newtheorem{cor}{Corollary}
\newcommand{\ul}[1]{\underline{#1}}
\newcommand{\GS}{{S(G)}}
\newcommand{\SCR}{{\mathbb{CR}}}
\newcommand{\SCS}{{\mathbb{R}}}
\newcommand{\ocr}{o}
\newcommand{\LL}{{\mathcal{L}}}
\newcommand{\RR}{{\mathcal{R}}}
\newcommand{\TT}{{\mathcal{D}}}
\begin{document}
\title{ Cheating Robot Games: A model for insider information}
\begin{center}
\uppercase{\bf Cheating Robot Games: A model for insider information}
\vskip 20pt
{\bf Melissa A. Huggan\footnote{Supported by NSERC PDF-532564-2019 while the research took place.}}\\
\textit{\small{Department of Mathematics and Computer Science, Mount Allison University\\ 
Sackville, New Brunswick, Canada}}\\
{\tt mhuggan@mta.ca}\\ 
\vskip 10pt
{\bf Richard J. Nowakowski\footnote{Supported by NSERC grant 2019-04914.}}\\
\textit{\small{Department of Mathematics and Statistics, Dalhousie University\\
Halifax, Nova Scotia, Canada}}\\
{\tt r.nowakowski@dal.ca}\\ 
\end{center}

\begin{abstract} 
Combinatorial games are two-player games of pure strategy where the players,
usually called Left and Right, move alternately.  In this paper, we introduce Cheating Robot games.  
These arise from simultaneous-play combinatorial games where one player has insider information (`cheats'). Play occurs in rounds.
 At the beginning of a round, both players know the moves that are available to them. Left chooses a move. Knowing Left's move, Right then chooses a move. Right's move is not constrained by Left's choice. The round is not completed until both players have made a choice. A game is finished only when one or both players do not have a move at the beginning of a round. Right choosing a move, knowing Left's, makes the games deterministic, distinguishing them from simultaneous games. Also, the ending condition distinguishes this class of games from combinatorial games, since the outcomes are now Left-win, Right-win and draw. 
 
 The basic theory and properties are developed, including showing that there is an equivalence relation and partial order on the games. Whilst there are no inverses in the class of all games, we show that there is a sub-class, simple hot games, in which the integers have inverses.  In this sub-class, the optimal strategies are obtained by the solutions to a minimum-weight matching problem on a graph whose number of vertices equals the number of summands in the disjunctive sum.
\end{abstract}

\section{Introduction}

 The Cheating Robot model is inspired by two approaches to simultaneous games. One, and where the name originates, is the Japanese robot that always wins \emph{Rock}-\emph{Paper}-\emph{Scissors} against humans  (see~\cite{ItoSYI2016},  \cite{ShortHVS2010}). When played between humans, \emph{Rock}-\emph{Paper}-\emph{Scissors} is a simultaneous game. However, in Human-versus-Robot, the robot `cheats' by having reflexes fast enough to see what the human hand is forming and then responds correctly. This changes the simultaneous game into one in which the players play alternately but complete their moves in the same round. This is the same situation for simultaneous Human-versus-Human games where one player, by some means, has insider knowledge of what their opponent will play immediately before he plays and has time to react. 
 
  In this paper, we restrict the underlying games to the zero-sum combinatorial games presented in  \cite{AlberNW2019, BerleCG2001, Siege2013}. These are two-player games of perfect information and no chance devices, where the players move alternately. Examples include  \textsc{checkers}, \textsc{chess}, and \textsc{go}. The players are \textit{Left} and \textit{Right}. The usual winning convention is normal play, where the player who cannot move is the loser. The theory of alternating-play combinatorial games
  is particularly useful when applied to games that decompose into components. Our theory is also based on games which decompose. For example, consider the simple \textsc{toppling dominoes} position consisting of three components, 
  \[
\includegraphics[width=0.3\textwidth]{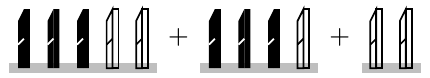}
\]
Left can topple a black domino and Right a white domino. Each move topples everything in the same direction in that component. 
Now we add the same-round, Cheating Robot aspect. As Left is choosing a domino to topple, the robot (Right) sees Left's choice and has time to react. When two dominoes are chosen in the
same component, if they topple towards each other then only the dominoes between them are toppled. In this example, if Right moves in the same component as Left, then after two rounds, the situation is
  \[
\includegraphics[width=0.3\textwidth]{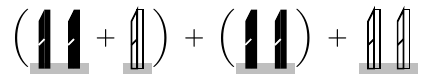}
\]
Left has four moves remaining and Right only three, so Left wins.  A little work shows that there is a Nash equilibrium: in the first round, Left should play in the first component, and Right in whichever of the first two components Left doesn't play. This results in,
  \[
\includegraphics[width=0.2\textwidth]{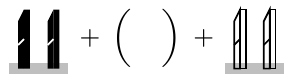}
\]
which is a draw. Players will lose if they deviate from their Nash strategies. Section \ref{subsec:toppling dominoes} illustrates how to find a Nash equilibrium for a class of \textsc{cheating robot toppling dominoes} positions.
  
More formally, in the Cheating Robot model, games have the following properties. There are two players, called \textit{Left} (female) and \textit{Right} (male). One player is the \textit{robot}; throughout this paper, it will be Right. Left is aware that Right is cheating and thus the games are deterministic. They play in rounds which consists of one move by each player, Left playing first. In a single round, Right's availability of moves is not affected by Left's choice. If one, or both, of the players cannot move, the game is over and a winner is declared according to some convention. In this paper, a player wins if they can move but their opponent cannot, and the game is a draw if neither has a move.

We start the development of the theory of combinatorial games played under the Cheating Robot model. There are several ways combinatorial games can be extended to Cheating Robot play. In keeping with combinatorial game theory, our guiding philosophy is twofold: (i) \textit{at the beginning of each round, the players must know when they have a move and what these are;} and (ii) \textit{if a player does not have a move they cannot win}. 
 
 The first part of the philosophy impacts the structure of the games. At the beginning of a same-round move, the players each have a set of available moves. Given our model, even though the robot (Right) can react to Left's move, his available moves are independent of her choice. This means, all pairs of moves are legal, and this must be reflected in redefining the alternating-play rules to same-round-play. Since the move sets are understood from the beginning of a same-round move, this allows us to use a matrix to define a position in terms of its moves.  The second part of the philosophy is the extension of the normal play winning convention to the Cheating Robot model.

 This is the approach  the authors of \cite{BahriK2010} suggested as a way of approximating values for simultaneous play in \textsc{domineering} \cite{CincoI}. Further, in~\cite{Huggan}, several combinatorial rulesets are extended to simultaneous rulesets which can then be reinterpreted in terms of Cheating Robot games (\emph{cr-games}). 

Combinatorial games, under normal play, have the following properties. They form an equivalence relation under a notion of `equality'. The equivalence classes have unique smallest games (smallest game tree) taken as the representative for the class called the \emph{canonical form}. There is a natural notion of addition and additive inverse. The  canonical forms are a partially ordered, abelian group. The main question is therefore, how much of the mathematical structure is carried over? The basic notions of disjunctive sum and equality do carry over, therefore
 some of this structure will be present in the Cheating Robot theory.

Our paper proceeds as follows. We develop the basic theory for all short (finite number of positions and no repeated positions) cr-games in Section \ref{sec:CR_theory}. First, we introduce the concepts of moves and options. 
Theorem \ref{thm:FTCR} proves there are only three outcome classes: Left-win, Right-win, and draw. The disjunctive sum induces an equivalence relation and a partial order on the games (see Theorem \ref{thm:equiv_po}). Theorems \ref{thm:dom2}, \ref{thm:Rightdom2}, and \ref{thm:dom1} provide simplifications of cr-games using the partial order. 
In Section~\ref{sec: simple hot games}, we investigate a class of cr-games that typically appear as endgame positions. Essentially, this class consists of combinations of games in which each game has one same-round move remaining in which the players can gain an advantage. To find the best order in which to play these positions, Left only needs a simple search, however, Right has to solve a quadratic-time optimization problem, Theorem \ref{thm:strategy2}. In Section \ref{subsec:toppling dominoes},  we give an example of solving the endgame of \textsc{cheating robot toppling dominoes}.

%%%%%%%%%%%%%%%%%%%%%%%%%%%%%%%%%
%%%%%%%%%%%%%%%%%%%%%%%%%%%%%%%%%
%%%%%%%%%%%%%%%%%%%%%%%%%%%%%%%%%
%%%%%%%%%%%%%%%%%%%%%%%%%%%%%%%%%
%%%%%%%%%%%%%%%%%%%%%%%%%%%%%%%%%
%%%%%%%%%%%%%%%%%%%%%%%%%%%%%%%%%
%%%%%%%%%%%%%%%%%%%%%%%%%%%%%%%%%
%%%%%%%%%%%%%%%%%%%%%%%%%%%%%%%%%
%%%%%%%%%%%%%%%%%%%%%%%%%%%%%%%%%
%%%%%%%%%%%%%%%%%%%%%%%%%%%%%%%%%

\section{Rules and Approaches for the Cheating Robot Model}\label{sec:CR_theory}

One important aspect of combinatorial games is that they frequently decompose into components. Players get to choose the component in which to play, independent of their opponent. In alternating play, as in \cite{AlberNW2019,BerleCG2001,Siege2013}, the components are analyzed first and 
this information is used to reduce the analysis needed for the total game. We use this approach in the Cheating Robot model. We copy, as much as is reasonable, the notation of normal play combinatorial games from \cite{AlberNW2019,BerleCG2001,Siege2013}.

Care is needed when defining the concepts that are seemingly clear when generalizing from alternating combinatorial game theory.
We differentiate between a move and an option. This is necessary. For example, in \textsc{domineering}, individually both Left and Right may be able to place a domino. If they overlap, then the same-round move cannot be the same as the Left move followed by the Right move. 
In a graph theoretic context, the moves correspond to the arcs and the options to the nodes.

Our philosophy, that both players must know when they have a move, demands that both Left and Right have a move if and only if there is a same-round move corresponding to both.  This is formalized in the next definition. Note that an \emph{out-arc} refers to an arc emanating from a vertex. For example, if $G$ has two vertices $x$ and $y$ and there is an arc from $x$ to $y$, then $x$ has an out-arc.

 \begin{definition}\label{def: ruleset_philosophy_1} 
 The \emph{game graph} of a cr-game is a directed multi-graph with coloured arcs. Each node is a \textit{position}, each arc is a \textit{move}.
 From a node, an out-arc is blue, red or black. Moreover,
 there is a bijection between the pairs of blue and red out-arcs and the black out-arcs. The blue out-arcs are the \textit{Left moves}, the red out-arcs are the \textit{Right moves},
and the black out-arcs are the  \textit{same-round moves} corresponding to the associated blue, red arcs. The head of a blue (red, black) out-arc is a Left (Right, same-round) \textit{option}. A \textit{follower} of a node $G$, is any node that can be reached by a sequence of out-arcs starting at $G$.
 \end{definition}
 
The blue and red arcs from Definition~\ref{def: ruleset_philosophy_1} are necessary to describe positions that have decomposed into components. 

To illustrate Definition~\ref{def: ruleset_philosophy_1}, consider \textsc{domineering}.  The game is played on a partial checkerboard, where Left places a $2\times1$ domino and Right, a $1\times 2$ domino. In the combinatorial game, overlaps are not allowed, but in the cr-game, the dominoes played on the same-round may overlap. See Figure~\ref{fig: game graph} for an example of a node and its out-arcs in the game graph of a \textsc{cheating robot domineering} position. 

\begin{figure}[htb]
\begin{center}

\begin{tikzpicture}[decoration = {snake, pre length=3pt,post length=7pt,
                    }]
\draw [->, color = blue, line width=1.5pt] (0,0.)-- (-3,-2.);
\draw [decorate,->, color = black, line width=1.5pt] (0,0.)-- (-1.,-2.);
\draw [decorate,->, color = black, line width=1.5pt] (0,0.)-- (1.,-2.);
\draw [dashed, ->, color = red, line width=1.5pt] (0,0.)-- (3.,-2.);
\draw [dashed, ->, color = red, line width=1.5pt] (0,0.)-- (5.,-2.);

\draw [line width=1.pt] (-3.5,-2.25)-- (-3.25,-2.25);
\draw [line width=1.pt] (-3.5,-2.75)-- (-3.5,-2.25);
\draw [line width=1.pt] (-3.25,-2.75)-- (-3.25,-2.25);
\draw [line width=1.pt] (-3.5,-2.75)-- (-2.75,-2.75);
\draw [line width=1.pt] (-3.5,-2.5)-- (-2.75,-2.5);
\draw [line width=1.pt] (-3.,-2.5)-- (-3.,-2.75);
\draw [line width=1.pt] (-2.75,-2.5)-- (-2.75,-2.75);

\draw [line width=1.pt] (-1.25,-2.75)-- (-1.25,-2.25);
\draw [line width=1.pt] (-1.,-2.75)-- (-1.,-2.25);
\draw [line width=1.pt] (-1.25,-2.25)-- (-1.,-2.25);
\draw [line width=1.pt] (-1.25,-2.75)-- (-0.5,-2.75);
\draw [line width=1.pt] (-1.25,-2.5)-- (-0.5,-2.5);
\draw [line width=1.pt] (-0.5,-2.5)-- (-0.5,-2.75);
\draw [line width=1.pt] (-0.75,-2.5)-- (-0.75,-2.75);

\draw [line width=1.pt] (0.75,-2.75)-- (0.75,-2.25);
\draw [line width=1.pt] (1,-2.75)-- (1,-2.25);
\draw [line width=1.pt] (0.75,-2.25)-- (1,-2.25);
\draw [line width=1.pt] (0.75,-2.75)-- (1.5,-2.75);
\draw [line width=1.pt] (0.75,-2.5)-- (1.5,-2.5);
\draw [line width=1.pt] (1.5,-2.75)-- (1.5,-2.5);

\draw [line width=1.pt] (2.75,-2.75)-- (3.5,-2.75);
\draw [line width=1.pt] (2.75,-2.5)-- (3.5,-2.5);
\draw [line width=1.pt] (2.75,-2.25)-- (2.75,-2.75);
\draw [line width=1.pt] (3,-2.25)-- (3,-2.75);
\draw [line width=1.pt] (2.75,-2.25)-- (3,-2.25);
\draw [line width=1.pt] (3.5,-2.5)-- (3.5,-2.75);

\draw [line width=1.pt] (4.75,-2.75)-- (5.5,-2.75);
\draw [line width=1.pt] (4.75,-2.5)-- (5.5,-2.5);
\draw [line width=1.pt] (4.75,-2.25)-- (4.75,-2.75);
\draw [line width=1.pt] (5,-2.25)-- (5,-2.75);
\draw [line width=1.pt] (4.75,-2.25)-- (5,-2.25);
\draw [line width=1.pt] (5.5,-2.75)-- (5.5,-2.5);

\draw [line width=1.pt] (-0.25,0.25)-- (0.5, 0.25);
\draw [line width=1.pt] (-0.25,0.5)-- (0.5, 0.5);
\draw [line width=1.pt] (-0.25,0.25)-- (-0.25, 0.75);
\draw [line width=1.pt] (0,0.25)-- (0, 0.75);
\draw [line width=1.pt] (0.25,0.25)-- (0.25, 0.5);
\draw [line width=1.pt] (0.5,0.25)-- (0.5, 0.5);
\draw [line width=1.pt] (-0.25,0.75)-- (0, 0.75);
\begin{scriptsize}
\draw [fill=black] (0.,0.) circle (3.0pt);
\draw [fill=black] (-3.,-2.) circle (2.0pt);
\draw [fill=black] (-1.,-2.) circle (2.0pt);
\draw [fill=black] (1.,-2.) circle (2.0pt);
\draw [fill=black] (3.,-2.) circle (2.0pt);
\draw [fill=black] (5.,-2.) circle (2.0pt);

\path [fill=black] (-3.5,-2.25) rectangle (-3.25,-2.5);
\path [fill=black] (-3.5,-2.5) rectangle (-3.25,-2.75);

\path [fill=black] (-1.25,-2.25) rectangle (-1,-2.5);
\path [fill=black] (-1.25,-2.5) rectangle (-1,-2.75);
\path [fill=black] (-1,-2.5) rectangle (-0.75,-2.75);

\path [fill=black] (0.75,-2.25) rectangle (1,-2.5);
\path [fill=black] (0.75,-2.5) rectangle (1,-2.75);
\path [fill=black] (1,-2.5) rectangle (1.5,-2.75);

\path [fill=black] (2.75,-2.5) rectangle (3,-2.75);
\path [fill=black] (3,-2.5) rectangle (3.25,-2.75);

\path [fill=black] (5,-2.5) rectangle (5.25,-2.75);
\path [fill=black] (5.25,-2.5) rectangle (5.5,-2.75);
\end{scriptsize}
\end{tikzpicture}
\caption{Example of a \textsc{cheating robot domineering} position with one move for Left (solid, straight, blue arc) and two moves for Right (dashed, straight, red arcs). The squiggly black arcs represent the same-round moves. }\label{fig: game graph}
\end{center}
\end{figure}

In the rest of the paper, we will use the terms positions, options, and moves instead of nodes and arcs.
Note that Definition \ref{def: ruleset_philosophy_1} distinguishes between a `cr-game' and a `position'.  Following the imprecise usage in English, when it will not create ambiguity, we will be lax about applying the terms position and game (instead of cr-game).
 Typically, a node, $G$, might be referred to as a `game' instead of `position' when the followers of $G$ need to be considered. That is, the cr-game restricted to $G$ and the followers of $G$.

\begin{definition}\label{def: game}
Let $G$ be a position in a cr-game. Let $G(L)$ be the list of \emph{Left moves} and $G^{L}$ be the list of \emph{Left options}. A single Left move is denoted by $G(i,\cdot)$, with the corresponding single Left option denoted by $G^{i,\cdot}$. Similarly, let $G(R)$ be the list of \emph{Right moves}, and  $G^{R}$ the list of \emph{Right options}. A single Right move is denoted by $G(\cdot,j)$, with the corresponding single Right option denoted by $G^{\cdot,j}$. The associated \emph{same-round move} will be written as $G(i,j)$ and the option as $G^{i,j}$. 

The \textit{same-round matrix}, $S(G)$, is the matrix where the rows are indexed by the elements of $G(L)$ and the columns by the elements of 
$G(R)$. The $(i,j)$ entry of $S(G)$ is $G^{i,j}$. See Figure~\ref{fig: game matrix_S_G}.

  \end{definition}
 \begin{figure}[ht]
\begin{center}
$
\begin{matrix}
\\
\\
G(1,\cdot)\\
\\
 G(2,\cdot)\\
 \\
\vdots\\
\\
G(m,\cdot)\\

\\
\end{matrix}
\begin{matrix}
\begin{matrix}
&$$\quad G(\cdot,1)\quad$$&$$  G(\cdot,2)\quad $$&$$\ldots\quad \quad$$&$$ G(\cdot,n) \quad\quad$$\\
\end{matrix}\\
\left\llbracket
\begin{matrix}
&&&&\\
$$\,\,G^{{1,1}}\,\, $$&$$\,\,G^{{1,2}}\,\,&$$\,\,\ldots\,\,&$$\,\, G^{{1, n}}$$\\
&&&&&\\
$$G^{{2,1} }$$&$$\quad G^{{2,2}}\quad&$$\quad\ldots\quad &$$\,\, G^{{2,n}} $$\\
&&&&\\
$$\vdots & $$\vdots & $$\ddots & $$\vdots \\
&&&&\\
$$G^{{m,1}}$$ & $$G^{{m,2}} & $$\ldots & $$\,\, G^{{m,n}}$$ \\
&&&&&\\
\end{matrix}
\right\rrbracket\\
\end{matrix}
$
\caption{The matrix, $S(G)$, of a game $G$ with all the moves and options, where Left has $m$ moves and Right has $n$ moves.}\label{fig: game matrix_S_G}
\end{center}
\end{figure}
%%%%%%%%%%%%
%%%%%%%%%%%%

If only $G$ is being considered, then $S(G)$ contains all the needed information. However, when a game has split into components, players do not have to move in the same component. The information for a component must include the results of only one player playing in it. This extra information is $G^{L}$ and $G^{R}$. We expand $S(G)$ by adding an extra row and column, each indexed by $\emptyset$, representing that the player did not play. The entries of the row will be $G^R$ and the column $G^L$. The entry indexed by $(\emptyset,\emptyset)$ is the result of neither player playing, i.e., $G$ itself.
See Figure \ref{fig: game matrix_2}. We can refer to $G$ by $S^+(G)$ but $G=\{G^{L} \mid S(G) \mid G^{R}\}$ is frequently more useful, since analysis will
 need to consider the individual Left and Right options. 

\begin{figure}[ht]
\begin{center}
$
\begin{matrix}
\\
\\
G(1,\cdot)\\
\\
 G(2,\cdot)\\
 \\
\vdots\\
\\
G(m,\cdot)\\
\\
\emptyset\\
\\
\end{matrix}
\begin{matrix}
\begin{matrix}
&$$\quad G(\cdot,1)\quad$$&$$  G(\cdot,2)\quad $$&$$\ldots\quad \quad$$&$$ G(\cdot,n) \quad\quad$$&$$ \emptyset \quad\quad\quad$$\\
\end{matrix}\\
\left\llbracket
\begin{matrix}
&&&&&\\
$$\,\,G^{{1,1}}\,\, $$&$$\,\,G^{{1,2}}\,\,&$$\,\,\ldots\,\,&$$\,\,G^{{1, n}}\,\,&$$\,\,G^{1,\cdot}\,\,$$\\
&&&&&\\
$$G^{{2,1} }$$&$$\quad G^{{2,2}}\quad&$$\quad\ldots\quad &$$\quad G^{{2,n}}\quad &$$\quad G^{2,\cdot}\quad$$ \\
&&&&&\\
$$\vdots & $$\vdots & $$\ddots & $$\vdots & $$\vdots$$\\
&&&&&\\
$$G^{{m,1}}$$ & $$G^{{m,2}} & $$\ldots & $$G^{{m,n}} & $$G^{m,\cdot}$$ \\
&&&&&\\
$$G^{\cdot,1}$$ & $$G^{\cdot,2}$$ & $$\ldots $$& $$G^{\cdot,n}$$ & $$G$$ \\
&&&&&\\
\end{matrix}
\right\rrbracket\\
\end{matrix}
$
\caption{The matrix, $S^+(G)$, of a game $G$ with all the moves and options.}\label{fig: game matrix_2}
\end{center}
\end{figure}

It is tempting to regard the lists of options as sets because then duplications would be eliminated, as is the case in CGT. However, in any simplification the connection between the Left and the Right moves and the resulting same-round options must be maintained. 

Here we demonstrate Definition~\ref{def: game} using the position in Figure \ref{fig: game graph}. We have the following lists: 
 $G(L) =\{G(1,\cdot)\}$, $G(R)=\{G(\cdot,1), G(\cdot,2)\}$, $G^{L} =\{G^{1,\cdot}\}$,  $G^{R}=\{G^{\cdot,1}, G^{\cdot,2}\}$. The matrices of $S(G)$ and $S^+(G)$ are shown in Figure~\ref{fig: ex_mat}. The double bracket notation for $S(G)$ and $S^+(G)$ is a visual aid to remind readers that the choices are not made simultaneously.  
 
\begin{figure}
\begin{center}
\begin{minipage}[t]{0.37\textwidth}
$\begin{matrix}
\\
$$G(1,\cdot)$$\\
\end{matrix}
\begin{matrix}
\begin{matrix}
&\,\,$$G(\cdot,1)\,\,$$&&$$\,\,G(\cdot,2)\,\,$$\\
\end{matrix}\\
\left\llbracket
\begin{matrix}
$$\quad G^{1,1}\quad $$&&$$\quad G^{1,2}\quad $$ \\
\end{matrix}
\right\rrbracket,
\end{matrix}$
\end{minipage}
\begin{minipage}[t]{0.5\textwidth}
$
\begin{matrix}
\\
$$\quad G(1,\cdot)$$\\
$$\emptyset$$\\
\end{matrix}
\begin{matrix}
\begin{matrix}
$$\quad G(\cdot,1)\qquad$$&&$$G(\cdot,2)\,\,$$&&$$\qquad\emptyset\qquad$$\\
\end{matrix}\\
\left\llbracket
\begin{matrix}
&$$\quad G^{1,1}\quad $$&&$$\quad G^{1,2}\quad $$&&$$\qquad G^{1,\cdot}\quad$$ \\
&$$\quad G^{\cdot,1}\quad $$&&$$\quad G^{\cdot,2}\quad $$&&$$\quad G\quad$$ \\
\end{matrix}
\right\rrbracket.
\end{matrix}$
\end{minipage}
\end{center}
\caption{The matrices $S(G)$ (left) and $S^+(G)$ (right) of the position $G$ from Figure~\ref{fig: game graph}.}\label{fig: ex_mat}
\end{figure}
%%%%%%%%%%%%%%%
%%%%%%%%%%%%%%% 

\begin{observation}
Note that the label on the Right moves is irrelevant. For example, if $A$ and $B$ are both Right moves, denoting $A$ by $G(\cdot, 1)$ and $B$ by $G(\cdot, 2)$ could have just as easily been denoted as $A$ by $G(\cdot, 2)$ and $B$ by $G(\cdot, 1)$. Similarly for Left. Observe that the effect of the relabeling on $S(G)$ is that the corresponding rows and/or columns will be permuted. For the game graph, this corresponds to permuting indices via  functions $\sigma$ for Left and $\pi$ for Right.  This implies a corresponding reordering on the same-round moves as well. For example, $G(i,\cdot)$, $G(\cdot,j)$, and $G(i,j)$ become 
$G(\sigma(i),\cdot)$, $G(\cdot,\pi(j))$, and $G(\sigma(i),\pi(j))$, respectively.
\end{observation}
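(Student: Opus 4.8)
The plan is to treat Definition~\ref{def: ruleset_philosophy_1} as the primary object and the index labels as secondary bookkeeping. The genuine data at a node $G$ is the coloured directed multi-graph together with the bijection between pairs of blue/red out-arcs and black out-arcs; the indices $1,\dots,m$ on the Left moves and $1,\dots,n$ on the Right moves are merely a chosen enumeration of those out-arcs and carry no intrinsic meaning. First I would make this explicit by fixing, at $G$, an enumeration of the blue out-arcs (the Left moves) and of the red out-arcs (the Right moves). A \emph{relabeling} is then nothing more than a second choice of enumeration of the \emph{same} arcs, recorded by permutations $\sigma$ of $\{1,\dots,m\}$ and $\pi$ of $\{1,\dots,n\}$, under which the arc formerly called $G(i,\cdot)$ is renamed $G(\sigma(i),\cdot)$ and the arc formerly called $G(\cdot,j)$ is renamed $G(\cdot,\pi(j))$.

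Next I would track how this renaming propagates to the same-round moves, which is the one point that needs care. By Definition~\ref{def: ruleset_philosophy_1}, the black out-arc written $G(i,j)$ is not independent data: it is the image, under the fixed bijection, of the pair consisting of the blue arc $G(i,\cdot)$ and the red arc $G(\cdot,j)$. Since the bijection is a feature of the graph itself and never refers to the labels, the black arc attached to the pair $(G(\sigma(i),\cdot),\,G(\cdot,\pi(j)))$ is exactly the one previously called $G(i,j)$; in the new enumeration it is therefore named $G(\sigma(i),\pi(j))$, and likewise the option $G^{i,j}$ becomes $G^{\sigma(i),\pi(j)}$. This is precisely the asserted reordering of the same-round moves.

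Finally I would read off the effect on $S(G)$. Its rows are indexed by $G(L)$ and its columns by $G(R)$, with $(i,j)$-entry $G^{i,j}$. Relabeling by $\sigma$ on the rows and $\pi$ on the columns moves the entry $G^{i,j}$ from position $(i,j)$ to position $(\sigma(i),\pi(j))$, which is exactly a row permutation by $\sigma$ together with a column permutation by $\pi$; equivalently, $S(G)$ is replaced by $P_\sigma\,S(G)\,P_\pi^{\top}$ for the associated permutation matrices. Applying the same argument to the bordered matrix $S^+(G)$ fixes the $\emptyset$-row and $\emptyset$-column, whose entries $G^R$, $G^L$, and $G$ do not reference the internal labels, and permutes the remaining rows and columns identically. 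Because the underlying graph is literally unchanged, every derived notion (followers, and hence outcomes) is unaffected, which is the sense in which the labels are irrelevant.

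There is no substantial obstacle here: the content is definitional. The only subtlety to flag at the outset is that the same-round move must be understood as the bijective image of a (Left, Right) pair of arcs rather than as independently posited data, so that once the pair is relabeled its black arc is forced to follow. Stating this dependency early reduces the remainder to a routine check on indices.
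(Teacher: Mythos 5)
Your argument is correct and is exactly the definitional unwinding the paper implicitly relies on: the Observation is stated without proof precisely because its content follows from Definition~\ref{def: ruleset_philosophy_1}, and you correctly isolate the one point needing care, namely that the black arc $G(i,j)$ is the bijective image of the pair of blue and red arcs and is therefore forced to be renamed $G(\sigma(i),\pi(j))$ once those arcs are relabeled. Nothing is missing; your description of the effect on $S(G)$ and $S^+(G)$ as a row permutation by $\sigma$ and column permutation by $\pi$ matches the paper's claim.
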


 In terms of the notation from Definition~\ref{def: game}, $\{G^{L} \mid S(G) \mid G^{R}\}$ is unambiguous since changing the orders of the lists $G(L)$ and $G(R)$  only results in permuting $G^{L}$ and $G^{R}$ and also permuting
 the rows and columns of $S(G)$. This does not change the position.

A cr-game is \textit{short} if  it is has finitely many positions and there is no sequence of moves that repeat a position (loop-free). Here, we only consider short cr-games. The set of short cr-games can be defined recursively based on the depth of the game graph. The \emph{depth}, or maximum length, of a game graph is the longest directed path.\\

Note that if there is a dot as a placeholder for a list of options, this implies that the list is empty.
\begin{definition}\label{def:zero}  Let $0=\{\,\cdot\mid \cdot\mid\cdot\,\}$ and put \text{Day} $0=\{0\}$. The \textit{birthday} of $0$ is $0$. Let $G=\{G^{L} \mid S(G) \mid G^{R}\}$. If all options of $G$ have birthdays less than $n$, and there exists at least one option in \text{Day} $n-1$, then $G$ is in the set \text{Day} $n$ and its birthday, $b(G)$, is $n$. Let $\SCR=\cup_{n=0}^{\infty}\text{Day\,}\, n$.
\end{definition}

 Induction is a common tool used in proofs and is often referred to as `induction on the birthdays' or `induction on the options'. This is in lieu of saying `induction on the longest directed path of the game graph'.  The set of short cr-games, $\SCR$, consists of positions whose game graphs are of bounded length.

The aim of any competitive game is to win. In the alternating-play literature, there are three main winning conventions. 
For this paper, we only use the normal play convention: \textit{a player wins if they have moves and their opponent doesn't.} 
Cr-games have three possible outcomes, not the four of alternating play.

\begin{definition}\label{def:outcomes}$[\normalfont{\textrm{Outcomes}}]$   Let $G \in \SCR$. If $S(G)=\emptyset$, then Left wins if $G(L)\ne \emptyset$, denoted $\ocr(G)=\LL$; Right wins if $G(R)\ne \emptyset$, denoted $\ocr(G)=\RR$; otherwise $G$ is a draw, $\ocr(G)=\TT$. 
 If $S(G)\ne \emptyset$,  then $\ocr(G)=\mathcal{L}$,  if Left can force a win; $\ocr(G)=\mathcal{R}$, if Right can force a win; otherwise $G$ is a draw, $\ocr(G)=\mathcal{D}$. 

Alternately, we sometimes write
$G\in \LL$, $G\in\RR$, and $G\in \TT$, for a Left-win, Right-win, or draw, respectively.

\end{definition}

As is usual with combinatorial games, the outcome of a game can be evaluated by backtracking from the terminal positions of the game graph. This gives the Fundamental Theorem of cr-games. 

\begin{theorem}\label{thm:FTCR} Let $G \in \SCR$. The outcome class of $G$ is one of $\LL$, $\TT$, or $\RR$.
\end{theorem}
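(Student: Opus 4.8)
The plan is to prove the statement by induction on the birthday $b(G)$ (equivalently, on the depth of the game graph), backtracking from terminal positions exactly as suggested in the paragraph preceding the theorem. What must be shown is that Definition \ref{def:outcomes} assigns to every $G \in \SCR$ one and only one of $\LL$, $\TT$, $\RR$; that is, the three possibilities are mutually exclusive and jointly exhaustive.

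For the base case I would take the positions with $S(G) = \emptyset$. Here the key is the bijection in Definition \ref{def: ruleset_philosophy_1} between the same-round moves and the pairs of Left/Right moves: an empty same-round matrix means there is no pair consisting of a blue and a red out-arc, so $S(G) = \emptyset$ forces $G(L) = \emptyset$ or $G(R) = \emptyset$. Hence at most one of $G(L), G(R)$ is nonempty, exactly one of the three clauses of Definition \ref{def:outcomes} applies, and $\ocr(G) \in \{\LL, \TT, \RR\}$ is unambiguous.

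For the inductive step, consider $G$ with $S(G) \ne \emptyset$, so that both $G(L) \ne \emptyset$ and $G(R) \ne \emptyset$ and a round must be completed by both players. Every same-round option $G^{i,j}$ is a follower of strictly smaller birthday, so by the induction hypothesis $\ocr(G^{i,j})$ is well-defined and lies in $\{\LL, \TT, \RR\}$. I would then make the round mechanics precise: Left selects a row $i$; Right, seeing $i$, selects a column $j$; play proceeds to $G^{i,j}$. Equipping the outcomes with the total order $\RR < \TT < \LL$ (Left prefers larger, Right smaller), optimal play within the round yields $\ocr(G) = \max_i \min_j \ocr(G^{i,j})$. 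Because $G(L)$ and $G(R)$ are finite and nonempty, this $\max$-$\min$ over a finite nonempty set of values in $\{\LL, \TT, \RR\}$ exists and is single-valued, giving $\ocr(G) \in \{\LL, \TT, \RR\}$. Mutual exclusivity and exhaustiveness are then immediate, since the order is total.

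The main obstacle is not the induction itself, which is routine backward induction, but justifying that the informal phrases ``Left can force a win'', ``Right can force a win'', and ``draw'' in Definition \ref{def:outcomes} are faithfully captured by this $\max$-$\min$ expression, so that the sequential structure of a single round (Left, then Right reacting) together with the total order on $\{\LL, \TT, \RR\}$ leaves no fourth possibility and no ambiguity. The care needed is to confirm that Right's within-round reaction never gains by declining an available move, which is impossible once $S(G) \ne \emptyset$ because the rules require both players to complete the round, so the recursion is exactly $\max_i \min_j$ with no separate ``pass'' branch.
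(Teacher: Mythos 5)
Your proof is correct and follows essentially the same route as the paper: induction on the options, with the base case handled by noting that $S(G)=\emptyset$ forces at most one of $G(L)$, $G(R)$ to be nonempty, and the inductive step resolved by a round-level case analysis. Your $\max_i \min_j$ formulation is just a compact restatement of the paper's three explicit cases (some row all Left wins; every row admits a Right-winning reply; otherwise a drawing reply exists in some row), so the two arguments coincide.
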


  \begin{proof} If $\GS=\emptyset$, then at most one of $G(L)$ and $G(R)$ is non-empty. From Definition \ref{def:outcomes}: if $G(L)=\emptyset=G(R)$ then $G$ is in $\TT$; if $G(L)\ne \emptyset$ then $G\in \LL$; 
  and if  $G(R)\ne \emptyset$, then $G\in\RR$.
   
   We may now assume that $\GS\ne \emptyset$. If, for some $G(i,\cdot)$, every $G^{i,j}$ is a Left win, then 
   $G\in \LL$ since Left can force a win.
     If, for each Left move, $G(i,\cdot)$, there exists a Right move $G(\cdot, j_i)$ such that the option $G^{i,j_i}$ is a Right win, then Right can force a win, i.e., $G\in \RR$. 
   Otherwise,  there exists $G(i,\cdot)$ where there is no $G(\cdot,j)$ such that $G^{i,j}$ is a Right win, but there is  $G(\cdot,k)$  such that $G^{i,k}$ is a draw, then $G \in \TT$.  The result now follows by induction on the options. 
  \end{proof}

Next, we formalize the definition of the disjunctive sum, $G+H$, in terms of moves and options.
We follow the standard CGT notation: 
If $R$ and $S$ are lists, then $R, S$ is the concatenation of the two lists;
if $B$ is a position and $A=A_1,A_2,\ldots,A_n$ is a list of positions, then $A+B$ is the list $A_1+B,A_2+B,\ldots,A_n+B$.

\begin{definition}\label{def: disjunctive sum} 
Let $G$ and $H$ be positions. For $G+H$,
the Left and Right moves are, respectively, $(G+H)(L)= G(L)+H, G+H(L)$ and $(G+H)(R)= G(R)+H, G+H(R)$;
 the Left and Right options are, respectively, $(G+H)^{L}=G^{L}+H,G+H^{L}$ and $(G+H)^{R}=G^{R}+H,G+H^{R}$.

The \emph{disjunctive sum} is written as
\begin{align*}
G+H &= \left\{ (G+H)^{L} \mid S(G+H)\mid (G+H)^{R}\right\}\\
&=\left\{ G^L+H, G+H^{L} \mid S(G+H)\mid G^R+H, G+H^R\right\},
\end{align*}
where $S(G+H)$ is obtained by Definition~\ref{def: game} and $S^+(G+H)$ is shown in Figure \ref{fig: game matrix_G+H}.
\end{definition}

\begin{sidewaysfigure}
$
\begin{matrix}
\\
\\
$$G(1,\cdot)+H$$\\
\\
$$ G(2,\cdot)+H$$\\
\\
$$\vdots$$\\
\\
$$G(m,\cdot)+H$$\\
\\
$$G+H(1,\cdot)$$\\
\\
$$G+H(2,\cdot)$$\\
\\
$$\vdots$$\\
\\
$$G+H(p,\cdot)$$\\
\\
$$\emptyset$$\\
\\
\end{matrix}
\begin{matrix}
\begin{matrix}
$$G(\cdot,1)+H$$&$$G(\cdot,2)+H$$&$$\ldots$$&$$ G(\cdot,n)+H\quad$$&$$ G+H(\cdot,1)$$&$$\quad G+H(\cdot,2)$$&$$\ldots$$&$$\quad G+ H(\cdot,q)\quad \quad$$&$$\quad\emptyset \quad \quad$$\\
\end{matrix}\\
\left\llbracket
\begin{matrix}
&&&&&&&&\\
$$G^{1,1}+H$$&$$ G^{1,2}+H$$&$$ \ldots$$&$$ G^{1,n}+H$$&$$ G^{1,\cdot}+H^{\cdot,1}$$&$$ G^{1,\cdot}+H^{\cdot,2}$$&$$ \ldots $$&$$\quad G^{1,\cdot}+H^{\cdot,q}$$&$$\quad G^{1,\cdot}+H \quad$$ \\
&&&&&&&&\\
$$G^{2,1}+H$$&$$G^{2,2}+H$$&$$\ldots$$&$$G^{2,n}+H$$&$$G^{2,\cdot}+H^{\cdot,1}$$&$$G^{2,\cdot}+H^{\cdot,2}$$&$$\ldots$$&$$G^{2,\cdot}+H^{\cdot,q}$$&$$G^{2,\cdot}+H$$\\
&&&&&&&&\\
$$\vdots$$&$$\vdots$$&$$\ddots$$&$$\vdots$$&$$\vdots$$&$$\vdots$$&$$\ddots$$&$$\vdots$$&$$\vdots$$\\
&&&&&&&&\\
$$G^{m,1}+H$$&$$G^{m,2}+H$$&$$\ldots$$&$$G^{m,n}+H$$&$$G^{m,\cdot}+H^{\cdot,1}$$&$$G^{m,\cdot}+H^{\cdot,2}$$&$$\,\,\ldots\,\,$$&$$G^{m,\cdot}+H^{\cdot,q}$$&$$G^{m,\cdot}+H$$\\
&&&&&&&&\\
$$G^{\cdot,1}+H^{1,\cdot}$$&$$G^{\cdot,2}+H^{1,\cdot}$$&$$\,\,\ldots\,\,$$&$$G^{\cdot,n}+H^{1,\cdot}$$&$$G+H^{1,1}$$&$$G+H^{1,2}$$&$$\ldots$$&$$G+H^{1, q}$$&$$G+H^{1, \cdot}$$ \\
&&&&&&&&\\
$$G^{\cdot,1}+H^{2,\cdot}$$&$$G^{\cdot,2}+H^{2,\cdot}$$&$$\,\,\ldots\,\,$$&$$G^{\cdot,n}+H^{2,\cdot}$$&$$G+H^{2,1}$$&$$G+H^{2,2}$$&$$\ldots$$&$$G+H^{2,q}$$&$$G+H^{2,\cdot}$$\\
&&&&&&&&\\
$$\vdots$$&$$\vdots$$&$$\ddots$$&$$\vdots$$&$$\vdots$$&$$\vdots$$&$$\ddots$$&$$\vdots$$&$$\vdots$$\\
&&&&&&&&\\
$$G^{\cdot,1}+H^{p,\cdot}$$&$$G^{\cdot,2}+H^{p,\cdot}$$&$$\,\,\ldots\,\,$$&$$G^{\cdot,n}+H^{p,\cdot}$$&$$G+H^{p,1}$$&$$G+H^{p,2}$$&$$\ldots$$&$$G+H^{p,q}$$&$$G+H^{p,\cdot}$$\\
&&&&&&&&\\
$$G^{\cdot,1}+H$$&$$G^{\cdot,2}+H$$&$$\ldots$$&$$G^{\cdot,n}+H$$&$$G+H^{\cdot,1}$$&$$G+H^{\cdot,2}$$&$$\ldots$$&$$G+H^{\cdot,q}$$&$$G+H$$\\
&&&&&&&&\\
\end{matrix}
\right\rrbracket\\
\end{matrix}
$
\caption{The matrix $S^+(G+H)$ which represents the disjunctive sum $G+H$, where Left has $m$ moves in $G$ and $p$ moves in $H$ and Right has $n$ moves in $G$ and $q$ moves in $H$.}\label{fig: game matrix_G+H}
\end{sidewaysfigure}

If, for all $X$, the outcome of $G+X$ is the same as the outcome for $H+X$, then there is no situation in which either player has a preference for $G$ or $H$. 
 In other words, both players are always indifferent to including either $G$ or $H$ in any sum. This indifference leads to the concepts of equality, and equivalence, which are defined relations. 
 
 Again, following the CGT convention, outcomes are totally ordered from Left's point of view, that is, $\LL > \TT > \RR$. Comparing outcomes leads to inequalities of games.

\begin{definition}\label{def: equality}
 Let $G$, $H \in \SCR$. 
 \begin{enumerate}
 \item  $G$ and $H$ are \textit{equal}, $G=H$,  if 
 $\ocr(G+X) = \ocr(H+X)$ for all $X\in \SCR$.
  \item $G$ is \textit{greater or equal} to $H$,  $G\succeq H$, if $\ocr(G+X)\geq  \ocr(H+X)$ for all $X\in \SCR$. 
  \item $G$ is \textit{greater} than $H$,  $G\succ H$, if $\ocr(G+X)\geq  \ocr(H+X)$ for all $X\in \SCR$ and $G\ne H$; $H\prec G$ if $G\succ H$.
  \item  Let $\SCS\subset \SCR$ then $G$ and $H$ are \textit{equivalent} modulo $\SCS$, $G\equiv_\SCS H$,
 if $\ocr(G+X) = \ocr(H+X)$ for all $X$ in $\SCS$.
 \end{enumerate}
\end{definition}

The next result shows that $0=\{\,\cdot\mid \cdot\mid\cdot\,\}$, (Definition \ref{def:zero}), is appropriately named.
\begin{lemma}\label{lem:zero}
For any position $G$, $G+0=G$.
\end{lemma}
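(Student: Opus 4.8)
The plan is to prove the stronger, purely structural fact that $G+0$ is \emph{the same position} as $G$ (their game graphs coincide), from which the equality demanded by Definition~\ref{def: equality}(1), namely $\ocr(G+0+X)=\ocr(G+X)$ for all $X\in\SCR$, is immediate: identical positions have identical outcomes in every disjunctive sum, so there is nothing left to check once identity is established.

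First I would unpack $G+0$ using Definition~\ref{def: disjunctive sum}. Since $0=\{\,\cdot\mid\cdot\mid\cdot\,\}$ has no Left moves, no Right moves, and empty same-round matrix, every term in Figure~\ref{fig: game matrix_G+H} recording play in the second component vanishes: the lists $G+0(L)$ and $G+0(R)$ are empty, so $(G+0)(L)=G(L)+0$ and $(G+0)(R)=G(R)+0$, and correspondingly $(G+0)^L=G^L+0$ and $(G+0)^R=G^R+0$. Reading off the top-left block of Figure~\ref{fig: game matrix_G+H} with $H=0$, the matrix $S(G+0)$ has the same row and column index sets as $S(G)$ (one row per Left move of $G$, one column per Right move of $G$) and $(i,j)$ entry $G^{i,j}+0$. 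Thus $G+0$ has exactly the same combinatorial shape of moves and options as $G$, with every option replaced by that option plus $0$.

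Then I would run a strong induction on the birthday $b(G)$, with induction hypothesis: for every position $A$ with $b(A)<b(G)$, the position $A+0$ is identical to $A$. For the base case $b(G)=0$ we have $G=0$, and $0+0=\{\,\cdot\mid\cdot\mid\cdot\,\}=0$ directly. For the inductive step, each entry appearing in $(G+0)^L$, $(G+0)^R$, and $S(G+0)$ is of the form $A+0$ with $A$ an option of $G$, hence $b(A)<b(G)$; so by the hypothesis $G^{i,\cdot}+0=G^{i,\cdot}$, $G^{\cdot,j}+0=G^{\cdot,j}$, and $G^{i,j}+0=G^{i,j}$. Substituting these back, the option lists and the same-round matrix of $G+0$ become identical to those of $G$, and the bijection of Definition~\ref{def: ruleset_philosophy_1} pairing the blue/red move pair $(i,j)$ with its associated black same-round move is carried over unchanged. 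Hence $G+0=G$ as positions, and the equality required by Definition~\ref{def: equality} follows at once.

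The step I expect to require the most care is the bookkeeping in the preceding paragraph: one must verify directly from Figure~\ref{fig: game matrix_G+H} that adjoining the empty component $0$ neither creates phantom moves (all rows and columns indexed by moves in the $0$-component are vacuous) nor disturbs the blue/red-to-black arc correspondence, so that the move-to-same-round-move bijection of $G+0$ agrees with that of $G$ exactly, rather than merely up to the relabeling permitted by the Observation. Everything else reduces to a routine application of the induction hypothesis.
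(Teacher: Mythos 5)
Your proof is correct and follows essentially the same route as the paper's: unpack the disjunctive sum with the empty component, observe that every move in the $0$-summand is vacuous, and apply induction on the options (birthday) to replace each $A+0$ by $A$. You are somewhat more careful than the paper in noting that the entries of $S(G+0)$ are $G^{i,j}+0$ before induction is applied (the paper asserts $S(G+0)=S(G)$ immediately), but this is a difference of exposition, not of substance.
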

\begin{proof}
By definition of disjunctive sum we have
\[
G+0 = \left\{(G+0)^{L} \mid S(G+0) \mid (G+0)^{R}\right\}\]
but since 0 has no moves $S(G+0)=S(G)$, and hence 
\begin{align*}
G+0=& \left\{G^{L}+0 \mid S(G)\mid G^{R}+0\right\}\\
=& \left\{G^{L} \mid S(G) \mid G^{R}\right\},
\end{align*}
and the last equality follows by induction. 
\end{proof}

If a game is comparable with $0$, then we know something about its outcome.
\begin{lemma}
Let $G \in \SCR$. If $G \succ0$, then $o(G)\geq \TT$. If $G\prec 0$, then $o(G) \leq \TT$.
\end{lemma}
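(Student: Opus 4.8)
The plan is to reduce everything to a single specialization of the universal quantifier hiding in the definition of the partial order (Definition~\ref{def: equality}). Before touching the inequalities, I would record three facts: that $0 \in \SCR$, since $0 \in \text{Day } 0$; that $\ocr(0) = \TT$, because $0 = \{\,\cdot \mid \cdot \mid \cdot\,\}$ has $S(0) = \emptyset$ and empty move lists, so Definition~\ref{def:outcomes} declares it a draw; and that $\ocr(G + 0) = \ocr(G)$ for every $G$, which is immediate from $G + 0 = G$ (Lemma~\ref{lem:zero}). In particular $0 + 0 = 0$.

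For the first statement, I would unfold $G \succ 0$ to its defining condition $\ocr(G + X) \geq \ocr(0 + X)$ for all $X \in \SCR$, and then simply set $X = 0$. Using $G + 0 = G$ and $0 + 0 = 0$ this collapses to $\ocr(G) \geq \ocr(0) = \TT$, which is the claim. The second statement is symmetric: $G \prec 0$ means $0 \succ G$, so $\ocr(0 + X) \geq \ocr(G + X)$ for all $X \in \SCR$, and again choosing $X = 0$ gives $\TT = \ocr(0) \geq \ocr(G)$, i.e. $\ocr(G) \leq \TT$.

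I do not expect a genuine obstacle. The whole argument is the familiar combinatorial-game-theory trick of instantiating a ``for all test games $X$'' hypothesis at the most economical witness, here $X = 0$; unlike the structural results nearby, it needs no induction on birthdays. The only points requiring a moment's care are correctly reading off $\ocr(0) = \TT$ from the outcome definition and remembering to discharge the trailing $+\,0$ via Lemma~\ref{lem:zero} so that the conclusion is phrased in terms of $\ocr(G)$ rather than $\ocr(G + 0)$.
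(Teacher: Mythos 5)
Your proposal is correct and matches the paper's argument: the paper likewise instantiates the universal quantifier at $X = 0$, notes $\ocr(0+0) = \TT$, and uses $\ocr(G+0) = \ocr(G)$ to conclude. No meaningful differences.
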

\begin{proof}
Consider $G$, where $G\succ 0$. By the definition, $o(G+X)\geq o(0+X)$ for all $X$.  If we let $X$ be $0$, then
$o(G+0)\geq o(0+0)$. Clearly, $o(0+0)=\TT$, and since neither player has a move in $0$, then $o(G+0)=o(G)$.
Thus, if $G\succ 0$, then $\ocr(G) \geq \mathcal{D}$. Similarly, if $G\prec 0$, then $\ocr(G) \leq \mathcal{D}$.
\end{proof}
The converses are not true. Theorem \ref{thm: structure of G} gives a characterization for  $G\succ 0$.\\

In general, to show $G\succeq H$, we need to show both that:
(1) if Right can win $G+X$, then he can win $H+X$; and
(2) if Right can force a draw for $G+X$, then he can either force a draw or win $H+X$. 
To prove $G=H$ it suffices to show that $G\succeq H$ and $H\succeq G$.

The following is a typical approach
to find the outcome of $H+X$.  Start with a move,  $H(j,\cdot)+X$. If the order relation between some $G(i,\cdot)$ and $H(j,\cdot)$ is known, then use Right's known response to $G(i,\cdot)+X$ as a guide to Right's response in $H(j,\cdot)+X$. This approach is used often in CGT. \\

\begin{theorem}\label{thm:equiv_po}
 The equality relation  is an equivalence relation on $\SCR$. The relation $\succeq$ is a partial order on $\SCR/=$.
\end{theorem}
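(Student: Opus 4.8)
The plan is to observe that both relations are defined pointwise over all $X \in \SCR$ in terms of the outcome comparison on the three-element chain $\LL > \TT > \RR$, which is itself a total order. Every property I need will be obtained by lifting a property of this finite total order through the universal quantifier over $X$. The only external facts required are that $\ocr(G+X)$ is well-defined for every $G, X \in \SCR$ (Theorem~\ref{thm:FTCR}, together with closure of $\SCR$ under disjunctive sum) and that $\{\LL, \TT, \RR\}$ is totally ordered.

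First I would establish that $=$ is an equivalence relation. Reflexivity holds because $\ocr(G+X) = \ocr(G+X)$ for every $X$. Symmetry holds because if $\ocr(G+X) = \ocr(H+X)$ for all $X$, then $\ocr(H+X) = \ocr(G+X)$ for all $X$. Transitivity holds because if $\ocr(G+X) = \ocr(H+X)$ and $\ocr(H+X) = \ocr(K+X)$ for every $X$, then $\ocr(G+X) = \ocr(K+X)$ for every $X$. Each of these merely transports the corresponding property of equality of outcome values, one value of $X$ at a time.

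Next I would turn to $\succeq$. Before checking the order axioms, I would verify that $\succeq$ descends to $\SCR/=$: if $G = G'$ and $H = H'$, then for every $X$ we have $\ocr(G'+X) = \ocr(G+X)$ and $\ocr(H+X) = \ocr(H'+X)$, so $G \succeq H$ forces $\ocr(G'+X) \geq \ocr(H'+X)$ for all $X$, i.e.\ $G' \succeq H'$; hence the relation is well-defined on equivalence classes. Reflexivity ($G \succeq G$) and transitivity follow pointwise from reflexivity and transitivity of $\geq$ on the outcome chain, exactly as for $=$ above. The crucial step is antisymmetry: if $G \succeq H$ and $H \succeq G$, then for each fixed $X$ we have both $\ocr(G+X) \geq \ocr(H+X)$ and $\ocr(H+X) \geq \ocr(G+X)$; since $\geq$ is a total order on $\{\LL, \TT, \RR\}$, antisymmetry at the level of outcomes yields $\ocr(G+X) = \ocr(H+X)$, and as this holds for every $X$ it is precisely the definition of $G = H$. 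On the quotient this reads $[G] = [H]$, which is antisymmetry of $\succeq$ on $\SCR/=$.

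I do not expect a genuine obstacle here; the content is essentially bookkeeping, and the definitions in Definition~\ref{def: equality} are arranged so that antisymmetry of the game order collapses exactly onto the definition of equality. The one point deserving a line of care is the implicit closure claim — that $G+X \in \SCR$ whenever $G, X \in \SCR$ — so that $\ocr(G+X)$ is defined and Theorem~\ref{thm:FTCR} applies; this follows because the disjunctive sum of two finite loop-free game graphs is again finite and loop-free, so no position is revisited.
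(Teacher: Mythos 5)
Your proposal is correct and follows essentially the same route as the paper's proof: every axiom is obtained pointwise from the corresponding property of the total order $\LL > \TT > \RR$ on outcomes, with antisymmetry of $\succeq$ collapsing onto the definition of $=$. The only additions — explicitly checking that $\succeq$ descends to $\SCR/=$ and that $\SCR$ is closed under disjunctive sum — are careful touches the paper leaves implicit, not a different argument.
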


\begin{proof} For any position $G$, let $G \in \SCR$, clearly $G= G$. We know $\ocr(G+X) = \ocr(H+X)$ for all $X \in \SCR$ if and only if $ \ocr(H+X)=\ocr(G+X)$ for all $X \in \SCR$, and hence $G = H$ if and only if $H = G$. 

Let $G,H,K \in \SCR$.  If $\ocr(G+X) = \ocr(H+X)$ and $\ocr(H+X)=\ocr(K+X)$ for all $X\in \SCR$, then $\ocr(G+X) = \ocr(K+X)$. Thus $G= K$ and it follows that $=$ is an equivalence relation.

Next we prove that the relation $\succeq$ is a partial order on $\SCR/=$. Clearly, $G\succeq G$. If $\ocr(G+X) \geq \ocr(H+X)$ and $\ocr(H+X) \geq \ocr(G+X)$ for all $X\in \SCR$, then $\ocr(G+X) =\ocr(H+X)$ for all $X\in \SCR$ and hence, $G = H$ by definition. Lastly, for every $X\in\SCR$, if $\ocr(G+X) \geq \ocr(H+X)$ and $\ocr(H+X)\geq \ocr(K+X)$, then $\ocr(G+X) \geq \ocr(K+X)$. Thus $G\succeq K$ and it follows that $\succeq$ is a partial order.\hfill 
\end{proof}

\begin{theorem}\label{thm: properties} Let $G,H,K \in \SCR$. We have 
(1) $G+H= H+G$ and (2) $(G+H)+K = G+(H+K)$. 
\end{theorem}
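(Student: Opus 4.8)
The plan is to prove both statements by induction on the total birthday $b(G)+b(H)$ (for commutativity) and $b(G)+b(H)+b(K)$ (for associativity), using the recursive structure of the disjunctive sum from Definition~\ref{def: disjunctive sum}. Since the definition of equality (Definition~\ref{def: equality}) is extensional — $G+H=H+G$ means $\ocr((G+H)+X)=\ocr((H+G)+X)$ for all $X$ — the cleanest route is to prove the stronger \emph{identity} statement: that $G+H$ and $H+G$ are literally the same position (isomorphic game graphs), from which equality follows trivially. The Observation following Definition~\ref{def: game} is the key tool here: it tells us that permuting the rows and columns of $S(G)$, together with the corresponding relabelling of $G^L$ and $G^R$, does not change the position. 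Commutativity of $+$ will amount to exactly such a relabelling.

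First I would unfold $G+H$ and $H+G$ using Definition~\ref{def: disjunctive sum}. For the Left moves, $(G+H)(L)=G(L)+H,\,G+H(L)$ while $(H+G)(L)=H(L)+G,\,H+G(L)$; these are the same two lists concatenated in the opposite order. The same holds for the Right moves and for the same-round matrices: examining Figure~\ref{fig: game matrix_G+H}, the matrix $S^+(H+G)$ is obtained from $S^+(G+H)$ by swapping the two blocks of rows (the $G$-moves block and the $H$-moves block) and the two blocks of columns, and by applying commutativity recursively inside each entry (each entry is a smaller disjunctive sum). The block-swap is precisely a permutation of rows and columns of the form covered by the Observation, so it does not change the position; the entrywise swaps are handled by the induction hypothesis, since every entry of $S^+(G+H)$ is a sum of positions of strictly smaller total birthday. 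This establishes (1).

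For associativity (2), I would again prove the identity $(G+H)+K=G+(H+K)$ by induction on $b(G)+b(H)+b(K)$, unfolding both sides with Definition~\ref{def: disjunctive sum}. The Left moves of both sides reduce to the concatenation $G(L)+H+K,\;G+H(L)+K,\;G+H+K(L)$ (up to the associativity of list concatenation and of $+$ inside each summand, the latter supplied by the induction hypothesis), and likewise for Right moves. The same-round matrix of each side decomposes into a $3\times 3$ arrangement of blocks indexed by which of $G$, $H$, $K$ each player moves in (including the ``no move'' $\emptyset$ option that appears in $S^+$), and the two sides produce the same block structure once one invokes the induction hypothesis entrywise and the trivial associativity of list concatenation for the ordering of moves. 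I would note that commutativity (1) and Lemma~\ref{lem:zero} ($G+0=G$) together confirm that $+$ behaves as an abelian monoid operation at the level of positions.

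The main obstacle is bookkeeping rather than mathematical depth: one must carefully match the block decomposition of the (large) matrix in Figure~\ref{fig: game matrix_G+H} on both sides and verify that the induced row/column permutation is of the benign type sanctioned by the Observation, while simultaneously feeding the induction hypothesis into each matrix entry. The subtle point to get right is that the disjunctive sum must preserve the \emph{pairing} between a same-round move $G(i,j)$ and its associated Left and Right moves (the caution flagged in the paragraph ``It is tempting to regard the lists of options as sets\ldots''); so I would be explicit that the relabelling permutations $\sigma,\pi$ act consistently on the blue/red out-arcs and the black out-arcs together, ensuring the bijection in Definition~\ref{def: ruleset_philosophy_1} is respected throughout. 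Once that consistency is checked, both identities follow, and equality in the sense of Definition~\ref{def: equality} is immediate.
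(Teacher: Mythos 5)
Your proposal is correct and follows essentially the same route as the paper: induction on birthdays, unfolding Definition~\ref{def: disjunctive sum}, using the row/column permutation observation for commutativity, and matching the $3\times 3$ block decomposition of the same-round matrix for associativity. The paper's proof is exactly this argument, written out with the explicit matrices.
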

\begin{proof}

(1) By definition, 
\begin{align}
G+H &= \left\{ (G+H)^{L} \mid S(G+H)\mid (G+H)^{R}\right\}\label{eq: definition}\\
&=\left\{ G^L+H, G+H^L \mid S(G+H)\mid G^R+H, G+H^R \right\}\\
&=\left\{ H+G^L, H^L+G \mid S(H+G)\mid H+G^R, H^R +G\right\}\label{eq: induction}\\
&=H+G,
\end{align}
where Equation~\ref{eq: induction} follows by induction and the fact that permuting the rows and columns of the matrix leaves the resulting game unchanged. Hence, $G+H = H+G$.

(2) To show $(G+H)+K=G+(H+K)$, we consider Definition~\ref{def: disjunctive sum} and each of $((G+H)+K)^L$,   $((G+H)+K)^R$, and $S((G+H)+K)$ in turn.  
\begin{align*}
((G+H)+K)^L&=(G+H)^L+K, (G+H)+K^L\\
&=(G^L+H)+K, (G+H^L)+K,  (G+H)+K^L\\
&=G^L+(H+K), G+(H^L+K),  G+(H+K^L)\quad\text{(by induction)}\\
&=G^L+(H+K), G+(H+K)^L\\
&=(G+(H+K))^L.
\end{align*}

\begin{align*}((G+H)+K)^R&=(G+H)^R+K, (G+H)+K^R\\
&=(G^R+H)+K, (G+H^R)+K,  (G+H)+K^R\\
&=G^R+(H+K), G+(H^R+K),  G+(H+K^R)\quad\text{(by induction)}\\
&=G^R+(H+K), G+(H+K)^R\\
&=(G+(H+K))^R.
\end{align*}
The same-round options follow from the previous two equalities. Examining the matrices we have:

\begin{center}
$
\begin{matrix}
\\
$$(G+H)(L)+K$$\\
\phantom{stuff}\\
$$(G+H)+K(L)$$\\
\end{matrix}
\begin{matrix}
\begin{matrix}
$$\quad (G+H)(R)+K\quad$$&$$ (G+H)+K(R)\quad $$\\
\end{matrix}\\
     \left\llbracket
     \begin{matrix}
$$S(G+H)+K\quad$$&$$(G+H)^{L}+K^{R}$$\\
&\\
$$(G+H)^R+K^L\quad$$&$$(G+H)+S(K)$$\\
\end{matrix}
\right\rrbracket
\end{matrix},$
\end{center}

and 
\begin{center}
$
\begin{matrix}
\\
$$G(L)+(H+K)$$\\
\phantom{stuff}\\
$$G+(H+K)(L)$$\\
\end{matrix}
\begin{matrix}
\begin{matrix}
$$\quad G(R)+(H+K)\quad $$&$$ G+(H+K)(R)\quad$$\\
\end{matrix}\\
     \left\llbracket
     \begin{matrix}
$$S(G)+(H+K)\quad $$&$$G^L+(H+K)^R$$\\
&\\
$$G^R+(H+K)^L\quad $$&$$G+S(H+K)$$\\
\end{matrix}
\right\rrbracket
\end{matrix}.$
\end{center}

Expanding out the moves, gives that the indexing of the rows and columns respectively are identical in both matrices, up to permutation. In particular, in both cases, we have that 
\begin{center}
$
\begin{matrix}
\\
$$G(L)+H+K$$\\
\phantom{stuff}\\
$$G+H(L)+K$$\\
\phantom{stuff}\\
$$G+H+K(L)$$\\
\end{matrix}
\begin{matrix}
\begin{matrix}
$$\quad G(R)+H+K$$&$$\quad G+H(R)+K\quad$$&$$ G+H+K(R)\quad$$\\
\end{matrix}\\
     \left\llbracket
     \begin{matrix}
$$S(G)+H+K\quad$$&$$G^{L}+H^{R}+K\quad$$&$$G^{L}+H+K^{R}$$\\
&&\\
$$G^{R}+H^{L}+K\quad$$&$$G+S(H)+K\quad$$&$$G+H^{L}+K^{R}$$\\
&&\\
$$G^{R}+H+K^{L}\quad$$&$$G+H^{R}+K^{L}\quad$$&$$G+H+S(K)$$\\
\end{matrix}
\right\rrbracket
\end{matrix}.$
\end{center}

 Note that $G(L)+H+K$ is shorthand for the Left moves in $G$ summed with the other components. Similarly for the other matrix indices. Hence \[S((G+H)+K) =S(G+(H+K)).\] Thus, $(G+H)+K= G+(H+K)$.
\end{proof}

\begin{theorem}\label{thm:order preserving}
Let $A,B,C,D\in \SCR$.
\begin{enumerate}
\item If $A\succeq B$, then $A+C\succeq B+C$;
\item If $A\succeq B$ and $C\succeq D$, then $A+C\succeq B+D$.
\end{enumerate}
\end{theorem}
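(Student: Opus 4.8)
The plan is to reduce both parts to the definition of $\succeq$ (Definition~\ref{def: equality}) together with the associativity and commutativity established in Theorem~\ref{thm: properties} and the transitivity of $\succeq$ coming from Theorem~\ref{thm:equiv_po}. The defining condition for $A\succeq B$ is a \emph{universal} statement, $o(A+X)\geq o(B+X)$ over all test positions $X\in\SCR$, and the key observation is that the associative law lets me absorb the extra summand into that test position. So the whole argument amounts to a reindexing of the universal quantifier, and no new case analysis on game graphs is needed.

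For part (1), I would fix an arbitrary $X\in\SCR$ and set $Y := C+X$, which lies in $\SCR$ since the class is closed under disjunctive sum. Using Theorem~\ref{thm: properties}, $(A+C)+X = A+(C+X) = A+Y$ and likewise $(B+C)+X = B+Y$ as genuine equalities of positions, so their outcomes coincide exactly. Applying the hypothesis $A\succeq B$ to the particular test position $Y$ gives $o(A+Y)\geq o(B+Y)$, that is, $o((A+C)+X)\geq o((B+C)+X)$. Since $X$ was arbitrary, this is exactly $A+C\succeq B+C$.

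For part (2), I would apply part (1) twice and then invoke transitivity. From $A\succeq B$, part (1) yields $A+C\succeq B+C$. From $C\succeq D$, part (1) (adding $B$) gives $C+B\succeq D+B$, and commutativity rewrites this as $B+C\succeq B+D$. Transitivity of the partial order $\succeq$ (Theorem~\ref{thm:equiv_po}) then chains these into $A+C\succeq B+C\succeq B+D$, hence $A+C\succeq B+D$.

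I do not expect a genuine obstacle here: the substantive work was already carried out in proving associativity and commutativity and in establishing $\succeq$ as a partial order. The only points requiring care are that the identities $(A+C)+X = A+(C+X)$ are equalities of positions (so that the two sides have identical outcomes, not merely equal ones), which is precisely what Theorem~\ref{thm: properties} supplies, and that $\SCR$ is closed under $+$ so that $C+X$ is a legitimate test position in the universal condition defining $\succeq$.
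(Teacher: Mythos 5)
Your proposal is correct and follows essentially the same route as the paper: part (1) by absorbing $C+X$ into the test position via associativity, and part (2) by applying part (1) twice (using commutativity) and chaining the resulting inequalities. The only cosmetic difference is that you invoke transitivity of $\succeq$ abstractly where the paper combines the two outcome inequalities directly for each fixed $X$; these are the same argument.
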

\begin{proof}
By definition, if $A\succeq B$ then for all $Y \in \SCR$, $\ocr(A+Y)\geq \ocr(B+Y)$. Letting $Y$ be the game $C+X$, we obtain $\ocr(A+C+X)\geq \ocr(B+C+X)$ which means $A+C\succeq B+C$.

 In a similar fashion, letting  $Y$ be the game $B+X$ in the inequality $\ocr(C+Y)\geq \ocr(D+Y)$ 
gives $\ocr(C+B+X)\geq \ocr(D+B+X)$. Combining inequalities, and by commutativity of Theorem~\ref{thm: properties}, we have
 $\ocr(A+C+X)\geq \ocr(B+D+X)$ and the result follows. 
\end{proof}

%%%%%%%%%%%%%%%%%%%%%%%%
The partial order can be used to identify poor options and, since we are assuming best play, these can then be eliminated, as shown in the next three results. The first two are, respectively, what Left can eliminate and what Right can eliminate.  The third, Theorem~\ref{thm:dom1}, shows that given a list of same-round options associated with a single Left move, Right can replace all of the dominated options by his best option. 

\begin{theorem}\label{thm:dom2}
 For $G \in \SCR$, suppose that, for some $i$ and $j$, 
\begin{enumerate}
\item  $G^{i, \cdot}\succeq G^{j, \cdot}$; and
\item  for all $r$ there exists  $s$ with $G^{i,r}\succeq G^{j,s}$.
\end{enumerate}

Let $H$ be the game $G$ without Left's move $G(j,\cdot)$,  then $H=G$.
\end{theorem}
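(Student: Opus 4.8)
The plan is to prove $G=H$ by establishing the two inequalities $G\succeq H$ and $H\succeq G$ separately, each by induction on $b(X)$, and then invoking antisymmetry (Theorem~\ref{thm:equiv_po}). Throughout I will use that, by the backtracking of Theorem~\ref{thm:FTCR}, whenever both players have a move in a position $P$ (that is, $S(P)\neq\emptyset$) its outcome is $\ocr(P)=\max_{\mu_L}\min_{\mu_R}\ocr(P^{\mu_L,\mu_R})$, where Left first selects a move $\mu_L$ and Right, seeing it, selects a response $\mu_R$ either in the same summand or in another one, and $P^{\mu_L,\mu_R}$ is the resulting follower; outcomes are ordered $\LL>\TT>\RR$. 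The terminal case $S(G+X)=\emptyset$ is harmless: since $H$ retains Left's move $G(i,\cdot)$ and has exactly the same Right moves as $G$, we have $S(G+X)=\emptyset \iff S(H+X)=\emptyset$, and both positions are then Left wins.

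First I would prove $G\succeq H$; this needs neither hypothesis. The Left moves of $H+X$ form a sublist of those of $G+X$ (only $G(j,\cdot)$ is deleted), the Right moves coincide, and after any pair of moves available to both positions the two followers are \emph{identical} unless both players played inside $X$, in which case they are $G+X'$ and $H+X'$ for a strict follower $X'$. By the induction hypothesis $\ocr(G+X')\ge\ocr(H+X')$, while in all other cases the followers coincide; hence $\min_{\mu_R}\ocr((G+X)^{\mu_L,\mu_R})\ge\min_{\mu_R}\ocr((H+X)^{\mu_L,\mu_R})$ for every Left move $\mu_L$ of $H+X$. Maximizing over the larger move set of $G+X$ gives $\ocr(G+X)\ge\ocr(H+X)$.

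The substantive direction is $H\succeq G$, and this is where the main obstacle lies. Set $v=\ocr(G+X)$ and fix a Left move $\mu_L^\ast$ that is optimal in $G+X$, so that every Right response to it yields outcome at least $v$. If $\mu_L^\ast\neq G(j,\cdot)$, then $\mu_L^\ast$ is still legal in $H+X$ and the induction above shows it still guarantees $v$. The crux is $\mu_L^\ast=G(j,\cdot)$: Left must give up the deleted move, and I would have her play $G(i,\cdot)$ in $H+X$. The obstacle is that Right now reacts to a \emph{different} Left move, so his old response is unavailable and I must instead bound \emph{every} response to $G(i,\cdot)$. The decisive point is that these responses fall into exactly two types, one per hypothesis. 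If Right answers inside the component, reaching $G^{i,r}+X$, hypothesis~(2) furnishes an $s$ with $G^{i,r}\succeq G^{j,s}$; then Theorem~\ref{thm:order preserving} gives $\ocr(G^{i,r}+X)\ge\ocr(G^{j,s}+X)\ge v$, the last step because $G(j,\cdot)$ was optimal. If Right answers in another summand, reaching $G^{i,\cdot}+X^{\cdot,\rho}$, hypothesis~(1), $G^{i,\cdot}\succeq G^{j,\cdot}$, together with Theorem~\ref{thm:order preserving} gives $\ocr(G^{i,\cdot}+X^{\cdot,\rho})\ge\ocr(G^{j,\cdot}+X^{\cdot,\rho})\ge v$. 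Thus every response to $G(i,\cdot)$ leaves Left with outcome at least $v$, so $\ocr(H+X)\ge v=\ocr(G+X)$.

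Combining the two inequalities yields $\ocr(G+X)=\ocr(H+X)$ for all $X\in\SCR$, i.e.\ $G=H$. I expect essentially all of the difficulty to be concentrated in the substitution step above: checking that Right's two response types really are exhausted by hypotheses~(2) (response in the component) and~(1) (response elsewhere), and that the single assumption that $G(j,\cdot)$ is optimal simultaneously bounds both $\ocr(G^{j,s}+X)$ and $\ocr(G^{j,\cdot}+X^{\cdot,\rho})$ below by $v$. The disjunctive-sum bookkeeping and the recursive reappearance of $H$ versus $G$ (handled by the induction on $b(X)$) are routine.
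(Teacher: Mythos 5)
Your proof is correct and follows essentially the same route as the paper's: the easy inequality $G\succeq H$ from the move-set inclusion, and for $H\succeq G$ the substitution of $G(i,\cdot)$ for the deleted move $G(j,\cdot)$, with hypothesis~(2) handling Right's in-component responses and hypothesis~(1) his responses in another summand. The only difference is presentational — you argue from Left's guaranteed outcome $v$ in $H+X$, while the paper has Right translate his winning strategy from $H+X$ into $G+X$ — and these are dual narrations of the identical case analysis.
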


\begin{proof}
Note that  $S^+(H)$ is $S^+(G)$ but without the row labeled $G(j, \cdot)$. Left's moves in $H$ are a subset of the moves in $G$, and Right's moves are the same in both games. This implies $G \succeq H$.   

Now we show $H\succeq G$. Let $X\in\SCR$. Suppose Right can win (draw) $H+X$.  In $G+X$, Right mimics the winning (drawing) moves 
from $H+X$. The exception is if Left moves to $G^{j, \cdot}+X$. Right now considers Left moving 
from $G+X$ to  $G^{i, \cdot}+X$. If the winning (drawing) Right move is to $G^{i, \cdot}+X^{\cdot, r}$ then in $G^{j, \cdot}+X$
Right moves to $G^{j, \cdot}+X^{\cdot, r}$. Since $G^{i, \cdot}\succeq G^{j, \cdot}$, then Right wins (at least draws) $G^{j, \cdot}+X^{\cdot, r}$. If instead, a winning move is to $G^{i,r}+X$ then there is some $s$ with 
$G^{i,r}\succeq G^{j,s}$
and Right moves from $G(j, \cdot)+X$ to $G^{j,s}+X$ and wins (or at least draws). Therefore $H\succeq G$ thus proving
$H= G$.
\end{proof}

\begin{theorem}\label{thm:Rightdom2}  Let $G = \{G^{L} \mid \GS \mid G^{R}\}$, with $m$ Left moves and $n$ Right moves.

Let $H$ be the game obtained from $G$ by deleting the move $G(\cdot,n)$. In $G$, if for every  $i$, there exists $j_i\ne n$ such that $G^{i,n}\succeq G^{i,j_i}$, then $G= H$.
\end{theorem}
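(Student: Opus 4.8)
The plan is to prove $G=H$ by establishing $H\succeq G$ and $G\succeq H$ separately, dualizing the proof of Theorem~\ref{thm:dom2}. First I would record how $H$ sits inside $G$: the matrix $S^+(H)$ is obtained from $S^+(G)$ (Figure~\ref{fig: game matrix_2}) by deleting the column indexed by $G(\cdot,n)$ together with the lone Right option $G^{\cdot,n}$, while Left's moves and options are untouched and Right's moves in $H$ form a subset of those in $G$. Since extra moves can only help the player who owns them, any winning or drawing line for Right in $H+X$ is still available to him in $G+X$, so $o(G+X)\le o(H+X)$ for every $X\in\SCR$, that is, $H\succeq G$. This is the free direction.

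For $G\succeq H$ I would use the fact that it suffices to show that any Right win (respectively, draw) in $G+X$ can be realized as a Right win (respectively, win or draw) in $H+X$. Fix a winning or drawing strategy for Right in $G+X$ and have him replay it in $H+X$, substituting whenever the strategy calls for the deleted move $G(\cdot,n)$ in the $G$-summand. The first case is when, in the current round, Left has moved inside $G$ to $G^{i,\cdot}$ and the strategy answers with $G(\cdot,n)$, reaching $G^{i,n}+W$ for the remaining position $W$: here Right instead plays $G(\cdot,j_i)$, reaching $G^{i,j_i}+W$, and since $G^{i,n}\succeq G^{i,j_i}$, Theorem~\ref{thm:order preserving} gives $G^{i,j_i}+W\preceq G^{i,n}+W$, so Right does at least as well, while $j_i\ne n$ keeps the move legal in $H$. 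In every round where the strategy does not call for $G(\cdot,n)$ the same option is reached, the one exception being the entries where both players move inside $X$, which change from $G+X^{k,l}$ to $H+X^{k,l}$; these are handled by induction on birthdays, since after any number of rounds played in $X$ the two positions still differ only by the single missing Right move, so the substitution rule reapplies unchanged.

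The hard part, and the step I expect to be the main obstacle, is the second case: Left has moved in a summand other than $G$ while Right's strategy answers with $G(\cdot,n)$, landing in $G^{\cdot,n}+W$. Now it is the lone option $G^{\cdot,n}$, not a same-round option $G^{i,n}$, that must be replaced, and the domination hypothesis as indexed over the Left moves $i=1,\dots,m$ says nothing directly about it; to substitute, Right needs some $j\ne n$ with $G^{\cdot,j}\preceq G^{\cdot,n}$. This is exactly the instance of the domination condition for the empty Left move, namely the $\emptyset$-row of $S^+(G)$, whose entries are the lone options $G^{\cdot,j}$ (Figure~\ref{fig: game matrix_2}): reading the quantifier ``for every $i$'' as also ranging over that $\emptyset$-row supplies the required $j$, and Theorem~\ref{thm:order preserving} again yields $G^{\cdot,j}+W\preceq G^{\cdot,n}+W$, closing the case. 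With both cases settled the mimicking argument gives $G\succeq H$, and together with $H\succeq G$ this proves $G=H$.
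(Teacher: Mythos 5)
Your decomposition into $H\succeq G$ (free, since deleting $G(\cdot,n)$ only removes a Right move) and $G\succeq H$ (replay Right's winning or drawing strategy, substituting $G(\cdot,j_i)$ whenever it calls for the deleted move in reply to Left's $G(i,\cdot)$, using $G^{i,n}\succeq G^{i,j_i}$ and Theorem~\ref{thm:order preserving}) is exactly the paper's argument, and your handling of that substitution case matches the published proof. The interesting part is your ``second case.'' The paper asserts that the reply to a Left move $G(i,\cdot)$ is the \emph{only} place where mimicry can break down, and never discusses the round in which Left moves in $X$ while Right's winning reply in $G+X$ is the lone move $G(\cdot,n)$, landing at $G^{\cdot,n}+X^{k,\cdot}$. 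You are right that the hypothesis, indexed over the Left moves $i=1,\dots,m$, says nothing about the lone option $G^{\cdot,n}$, and right that a condition of the form $G^{\cdot,n}\succeq G^{\cdot,j}$ for some $j\ne n$ is what is needed to close that case.

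This is not a cosmetic omission: without such a condition the statement as written fails. Take $G$ with one Left move and two Right moves, $G^{1,\cdot}=G^{\cdot,1}=G^{1,1}=G^{1,2}=\ul{0}$ and $G^{\cdot,2}=\ul{-10}$; the hypothesis holds with $n=2$ and $j_1=1$ since $G^{1,2}\succeq G^{1,1}$. Let $X=\{\ul{5}\mid\ul{1}\mid\ul{0}\}$. In $G+X$, Left's move in $X$ is answered by $G(\cdot,2)$, reaching $\ul{-10}+\ul{5}$, a Right win, while her move in $G$ is answered by Right moving in $X$, reaching $\ul{0}+\ul{0}$; hence $\ocr(G+X)=\TT$. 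In $H+X$, Right's only replies to Left's move in $X$ are to $\ul{0}+\ul{5}$ and to $H+\ul{1}$, both Left wins, so $\ocr(H+X)=\LL$ and $G\ne H$. So your instinct to impose the $\emptyset$-row instance of the domination condition is the correct repair, and with it your argument is complete; just be aware that this is a strengthening of the theorem's hypothesis rather than a reading of it, and that the obstacle you flagged is a genuine gap in the result as stated, not merely in your attempt.
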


\begin{proof} Since Right has more options in $G$ than in $H$ and Left has all the same options in both games, we have $H \succeq G$. 

To show $G \succeq H$, let $X\in \SCR$ and suppose that Right can win (or draw) $G+X$. We claim that Right can win (or at least draw) $H+X$. There is only one case where the moves in $H+X$ cannot mimic those in $G+X$. That is, consider for a Left move  $H(i, \cdot)+X$, this is also a Left move in $G+X$, and a winning Right response is $G^{i,n}+X$. In $H(i, \cdot)+X$, Right moves to $G^{i,j_i}+X$. By assumption,  $G^{i,j_i}\preceq G^{i,n}$, so since Right can win (or draw) $G^{i,n} + X$, Right can win (at least draw) $G^{i,j_i}+X$, which is identical to $H^{i,j_i}+X $. That is, Right can win (at least draw) $H + X$. 
\end{proof}

\begin{theorem}\label{thm:dom1}  For a game $G$, 
suppose $G^{i,1}\succeq G^{i,2}$. Let $G'$ be the game where $S^+(G')=S^+(G)$ except $G^{i,1}$ is replaced by $G^{i,2}$. Then $G= G'$.
\end{theorem}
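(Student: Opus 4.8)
The plan is to prove directly the stronger statement that $\ocr(G+X)=\ocr(G'+X)$ for every $X\in\SCR$, which is exactly $G=G'$, by induction on the birthday $b(X)$. First I would record the structural facts that make the argument local: $G$ and $G'$ have identical Left moves, identical Right moves, and identical Left and Right \emph{options}; only the single same-round entry at position $(i,1)$ was changed from $G^{i,1}$ to $G^{i,2}$. In particular $S(G+X)=\emptyset$ if and only if $S(G'+X)=\emptyset$, and in that degenerate case the outcome is decided (Definition~\ref{def:outcomes}) solely by which players possess a move, information that is the same for $G$ and $G'$. So I may assume $S(G+X)\ne\emptyset$ and evaluate the outcome by the backtracking of Theorem~\ref{thm:FTCR}, viewed as Left choosing her move to maximize and Right then choosing his response to minimize among the same-round options listed in Figure~\ref{fig: game matrix_G+H}.

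Next I would compare, entry by entry, the same-round options of $G+X$ and $G'+X$. Because $G^{L}$ and $G^{R}$ are untouched, every cross-term (Left moving in one component, Right in the other) is literally identical in the two sums. The only options that can differ are (i) the ``both players move in the $G$-component'' entry at $(i,1)$, namely $G^{i,1}+X$ in $G+X$ versus $G^{i,2}+X$ in $G'+X$; and (ii) the ``both players move in $X$'' entries $G+X^{k,l}$ versus $G'+X^{k,l}$. For (ii), each $X^{k,l}$ is a same-round option of $X$ and so has strictly smaller birthday, whence the induction hypothesis applied to the fixed pair $(G,G')$ gives $\ocr(G+X^{k,l})=\ocr(G'+X^{k,l})$.

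The crux is (i), and this is where equality rather than a mere inequality emerges. For Left's move corresponding to row $i$, Right minimizes over the columns, and the substitution affects only column $1$. In $G+X$ Right's responses in that row include both $G^{i,1}+X$ and $G^{i,2}+X$, whereas in $G'+X$ columns $1$ and $2$ both yield $G^{i,2}+X$. Since $G^{i,1}\succeq G^{i,2}$ gives $\ocr(G^{i,1}+X)\ge \ocr(G^{i,2}+X)$ directly from Definition~\ref{def: equality}, the entry $G^{i,1}+X$ is never strictly better for Right than $G^{i,2}+X$, and this latter option survives at column $2$ in both matrices. Hence Right's minimum over row $i$ is unchanged by the replacement. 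Every other row is identical between the two games, so for each Left move Right's best response outcome agrees; taking the maximum over Left's moves yields $\ocr(G+X)=\ocr(G'+X)$, completing the induction.

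I expect the main obstacle to be bookkeeping rather than a deep idea. One must verify carefully that the change at $(i,1)$ is genuinely local: it alters no cross-term and no entry of any other row, and the replacement value $G^{i,2}$ still appears at position $(i,2)$ in both matrices. It is precisely this surviving copy at column $2$ that upgrades the expected one-sided comparison $G\succeq G'$ into the equality $G=G'$. The recursive coupling through the ``both in $X$'' block, where the untouched summand is itself $G$ in one sum and $G'$ in the other, is the reason the argument must be organized as an induction on $X$ rather than a single first-move comparison.
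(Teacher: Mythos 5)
Your proof is correct, and it rests on the same key observation as the paper's: the replacement is entirely local, the value $G^{i,2}$ survives at column $2$ of row $i$ in both matrices, and $G^{i,1}\succeq G^{i,2}$ guarantees that Right never strictly prefers the deleted entry. The organization, however, is genuinely different. The paper proves the two inequalities separately: $G'\succeq G$ is read off from the observation that every Right response available in $G'$ is still effectively available in $G$ (Right only gains by having $G^{i,1}$ as an extra choice), and $G\succeq G'$ is proved by a strategy-mimicking argument in which Right transports his winning (drawing) strategy from $G+X$ to $G'+X$, the single exceptional response being handled directly by the hypothesis $G^{i,1}\succeq G^{i,2}$ --- which already quantifies over all $X$, so no explicit induction is invoked. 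You instead compute $\ocr(G+X)$ and $\ocr(G'+X)$ as a minimax over the same-round matrix and match the two computations entry by entry, using an explicit induction on $b(X)$ to couple the ``both players move in $X$'' block, where the untouched summand is $G$ in one sum and $G'$ in the other. Your route obtains both inequalities in a single pass and makes explicit the recursion that the paper's ``mimic the strategy'' phrasing leaves implicit; its only cost is that it leans on the minimax reading of the outcome, which the paper states only implicitly inside the proof of Theorem~\ref{thm:FTCR}. Both arguments are sound.
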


\begin{proof}
 In $G$, Left has the same moves and Right has the same plus more moves than in $G'$, thus $G'\succeq G$. To show $G\succeq G'$, let $X\in \SCR$ and suppose that Right can win (draw) $G+X$. In $G'+X$, Right mimics the winning (drawing) moves from $G+X$. The exception is if Left moves to $G^{i,\cdot}$ and Right's winning (drawing) move in $G+X$ is to $G^{i,1}+X$. In $G'+X$, Right now responds to $G^{i,2}+X$ and since $G^{i,1}\succeq G^{i,2}$ Right wins (at least draws) $G^{i,2}+X$. Thus $G= G'$.
\end{proof}

\begin{definition}$[\normalfont{\textrm{Integers}}]$  Set $\ul{0}=\{\,\cdot\mid \cdot\mid\cdot\,\}$. For $n>0$, let $\ul{n}$ be the game $\left\{\ul{n-1} \mid \cdot \mid \cdot\,\right\}$; for $n<0$, let $\ul{n}$ be the game $ \left\{\,\cdot \mid \cdot \mid \ul{n+1}\right\}$. \end{definition}

For $n>0$, $\ul{n}$ is the game in which Left has $n$ moves and Right has none. Also, for brevity, we write $\left\{\ul{r} \mid \ul{s} \mid \ul{t}\right\}$ for the position where the same-round move is to $\ul{s}$.

\begin{theorem}\label{thm: structure of G} For a position $G$,  
 $G\succ 0$ if and only if $G(L)\ne\emptyset$ and $G(R) = \emptyset$.
\end{theorem}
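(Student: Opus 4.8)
The plan is to prove the two implications separately, handling the converse by its contrapositive and splitting on whether $G(R)$ is empty. For the forward direction, assume $G(L)\neq\emptyset$ and $G(R)=\emptyset$. I would actually establish the stronger statement that $\ocr(G+X)=\LL$ for every $X$ with $\ocr(X)\geq\TT$; together with the trivial bound $\ocr(G+X)\geq\RR$ when $\ocr(X)=\RR$ this gives $G\succeq 0$, and since $S(G)=\emptyset$ with $G(L)\neq\emptyset$ forces $\ocr(G)=\LL\neq\TT=\ocr(0)$ (take $X=0$) we also get $G\neq 0$, hence $G\succ 0$. The key point is that, because $G(R)=\emptyset$, every Right move in $G+X$ lies in the $X$-component so long as the $G$-component is untouched. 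So Left plays a non-losing strategy for $X$ entirely inside the $X$-component and never initiates in $G$; then $G$ stays fixed, Right is forced to answer inside $X$ every round, and the $X$-component plays out exactly as a play of $X$ in which Left never loses. Hence the first round at which a player has no move is one in which Right is stuck in $X$, and at that instant Right has no move at all (since $G$ is untouched and $G(R)=\emptyset$) while Left still has her move in $G$; so Left wins.

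For the converse I would prove the contrapositive: if it is \emph{not} the case that $G(L)\neq\emptyset$ and $G(R)=\emptyset$, then $G\not\succ 0$. If $G(R)=\emptyset$ then the hypothesis forces $G(L)=\emptyset$, whence $G=0$ and $G\not\succ 0$. The substantive case, and the main obstacle, is to show that $G(R)\neq\emptyset$ alone already forces $G\not\succeq 0$. The idea is that a Right move in $G$ is precisely a \emph{pass} that Right can use to decline to answer a Left threat elsewhere. To exploit this, fix any Right move $G(\cdot,j_0)$ with option $P=G^{\cdot,j_0}$ and set $X=\{\,\ul{-M}\mid \ul{N}\mid \ul{-N}\,\}$ for large integers $M,N$. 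The unique same-round option of $X$ is $\ul{N}$, a Left win, so $\ocr(X)=\LL$ by Theorem~\ref{thm:FTCR}. I claim that, with $M,N$ large enough, $\ocr(G+X)=\RR$, contradicting $G\succeq 0$.

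To verify the claim I examine Left's two kinds of opening move in $G+X$. If Left plays in $X$, Right answers with his pass $G(\cdot,j_0)$, reaching the same-round option $P+\ul{-M}$. If Left plays a move $G(i,\cdot)$ in $G$, Right answers with his big move in $X$, reaching $G^{i,\cdot}+\ul{-N}$. So it suffices to pick $N$ with $\ocr(G^{i,\cdot}+\ul{-N})=\RR$ for each of the finitely many Left options $G^{i,\cdot}$, and $M$ with $\ocr(P+\ul{-M})=\RR$. This rests on an auxiliary fact: for any short game $H$ there is a threshold beyond which $\ocr(H+\ul{-N})=\RR$. Indeed, Right may ignore $H$ and move only in the chain $\ul{-N}$, spending one move per round, while Left can only ever move inside $H$ and so is forced along a strictly decreasing chain of Left options; after at most $b(H)$ rounds Left has no move, and once $N>b(H)$ Right is still moving, so Left is the first player stuck. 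Granting this, every Left opening in $G+X$ is met by a Right move into a $\RR$-position, whence $\ocr(G+X)=\RR$ by the backtracking argument of Theorem~\ref{thm:FTCR}.

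The difficulty is concentrated entirely in the converse. The insight I expect to be hardest to find is that the operative consequence of $G(R)\neq\emptyset$ is not any favourable value inside $G$ but merely the existence of a pass, and that this can be weaponised by a \emph{single} test game $X$ whose same-round option rewards Left so richly that $\ocr(X)=\LL$, yet whose Left and Right options are both so punishing that neither cashing in $G$ nor threatening in $X$ can survive Right's pass. Calibrating $M$ and $N$ against the finitely many options of $G$, via the auxiliary threshold fact, is the technical heart of the argument; the forward direction and the degenerate cases are then routine.
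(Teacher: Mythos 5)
Your proof is correct, and its harder half rests on the same device as the paper's. For the direction $G(R)\ne\emptyset\Rightarrow G\not\succ 0$, the paper uses the test game $X=\{\ul{-n-1}\mid\ul{0}\mid\ul{-n-1}\}$ with $n=b(G)$, so that $\ocr(X)=\TT$ while Right's cross-responses (answer a $G$-move by moving in $X$, answer an $X$-move with the ``pass'' in $G$) drive $G+X$ to a Right win; your $X=\{\ul{-M}\mid\ul{N}\mid\ul{-N}\}$ is the same construction with $\ocr(X)=\LL$, and your threshold lemma ($\ocr(H+\ul{-N})=\RR$ once $N>b(H)$, since Left is forced down a chain of Left options) is exactly the counting step the paper leaves implicit in ``he plays through his $n+1$ moves.'' Where you genuinely diverge is the other direction: the paper argues by induction on options that a Right win/draw of $G+X$ translates into one of $X$, whereas you give a single global strategy argument --- Left confines herself to a non-losing strategy in the $X$-component, Right can never move in $G$, so the first player stuck is Right --- which yields the stronger conclusion $\ocr(G+X)=\LL$ whenever $\ocr(X)\succeq\TT$ and dispenses with the induction. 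The only point worth stating explicitly there is that Left's non-losing play in $X$ guarantees the $X$-component never reaches a position where Left is stuck while Right is not, so Left is never forced to disturb $G$; with that said, your argument is complete.
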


\begin{proof}
Suppose that $G \succ 0$.  

Suppose $G(R)\ne \emptyset$. Let $b(G) = n$, meaning the depth of the game graph is $n$, and let $X = \left\{\underline{-n-1} \mid  \underline{0} \mid \underline{-n-1}\right\}$. We will show that $\ocr(G+X)<  \ocr(X)$, contrary to Definition~\ref{def: equality}. Since $\ocr(X)= D$, we need to show that Right wins $G+X$. 

Consider Left's moves:

(i) Left plays in $G$, Right plays in $X$ and wins (since he plays through his $n+1$ moves and Left has at most $n$ moves); or

(ii) Left plays in $X$, Right plays in $G$ and wins (since $b(G^{\cdot,j}) < n$ and Right has $n+1$ moves in $X^{1,\cdot}$). 

By assumption, at least one of $G(L)$, $G(R)$ is non-empty. Hence we must have $G(R)=\emptyset$, and $G(L) \neq \emptyset$.\\

Now suppose $G(L)\ne\emptyset$ and $G(R)= \emptyset$. In this case, $o(G)=\LL$, and thus $G \not\preceq 0$. It now suffices to show that if Right can win (draw) $G+X$ then Right can win (at least draw) $X$ for all $X \in \SCR$. 
 
Left playing in $0+X$ means Left plays in $X$; i.e., $X(i,\cdot)$, for some $i$. In $G+X$, with Left playing the move 
$G+X(i,\cdot)$, since $G(R) = \emptyset$, Right responds to $G+X^{i,j}$, for some $j$. Since Right can win (draw) $G+X^{i,j}$, he can win (at least draw) $X^{i,j}$ by induction.
\end{proof}

From Theorem~\ref{thm: structure of G}, it follows that if $G(R) \neq \emptyset$ then $G \not= 0$ and if $G(R) = \emptyset$ and $G(L) \neq \emptyset$ then again $G \not= 0$. That is, the equivalence class of $0$ contains only $0$. A further implication is that if there is a move in either $G$ or $H$, then $G+H\not= 0$.

\begin{cor}\label{cor:0unique}
If $G= 0$, then $G$ is identically $0$. Moreover, if $G+H= 0$, then $G=H=0$.
\end{cor}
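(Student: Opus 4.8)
The plan is to show that the equivalence-class equality $G = 0$ forces $G$ to be literally arc-free, and then to bootstrap this to the additive claim using the definition of the disjunctive sum. For the first assertion I would run a three-case analysis on the moves of $G$, mirroring the three facts recorded immediately after Theorem~\ref{thm: structure of G}. Assume $G = 0$. If $G(R) \neq \emptyset$, I would reuse the distinguishing game $X = \{\underline{-n-1} \mid \underline{0} \mid \underline{-n-1}\}$ with $n = b(G)$ from the proof of Theorem~\ref{thm: structure of G}: the outcome count carried out there shows Right wins $G + X$ (if Left plays in $G$, Right exhausts his $n+1$ moves in $X$; if Left plays in $X$, Right answers in $G$), whereas $\ocr(X) = \mathcal{D}$. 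Thus $\ocr(G + X) = \mathcal{R} \neq \mathcal{D} = \ocr(0 + X)$, contradicting $G = 0$; hence $G(R) = \emptyset$, which already forces $S(G) = \emptyset$. If in addition $G(L) \neq \emptyset$, then Theorem~\ref{thm: structure of G} yields $G \succ 0$ and therefore $G \neq 0$, again a contradiction. These three cases being exhaustive, we conclude $G(L) = G(R) = \emptyset$, and by the pair/same-round bijection of Definition~\ref{def: ruleset_philosophy_1} the matrix $S(G)$ is empty too, so $G = \{\,\cdot \mid \cdot \mid \cdot\,\}$ is identically $0$.

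For the \emph{moreover} statement I would argue as follows. Suppose $G + H = 0$. By the part just established, $G + H$ is identically $0$; in particular $(G + H)(L) = \emptyset$ and $(G + H)(R) = \emptyset$. By Definition~\ref{def: disjunctive sum}, $(G + H)(L)$ is the concatenation $G(L) + H,\, G + H(L)$, and a concatenation of lists is empty only when each list is empty; moreover $G(L) + H$ is empty exactly when $G(L)$ is, and $G + H(L)$ exactly when $H(L)$ is. Hence $G(L) = H(L) = \emptyset$, and the identical reasoning applied to $(G + H)(R)$ gives $G(R) = H(R) = \emptyset$. So neither summand has a Left or a Right move; both therefore have empty same-round matrices and are identically $0$, giving $G = H = 0$.

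The only step carrying genuine content is the first case---excluding a Right move while $G = 0$---and there I would rely on the explicit distinguishing game and move count already performed for Theorem~\ref{thm: structure of G} rather than on its statement alone, since that statement characterizes $G \succ 0$ and not $G = 0$ directly. Everything else is forced by the shape of the disjunctive sum and the move--option bijection, so I anticipate no further obstacle.
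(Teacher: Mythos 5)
Your proposal is correct and follows essentially the same route as the paper, which justifies the corollary via the remark preceding it: the case $G(R)\neq\emptyset$ is excluded by the distinguishing position $X=\{\underline{-n-1}\mid\underline{0}\mid\underline{-n-1}\}$ from the proof of Theorem~\ref{thm: structure of G}, the case $G(R)=\emptyset$, $G(L)\neq\emptyset$ by the theorem's statement, and the \emph{moreover} part by noting that any move in a summand yields a move in the sum. Your explicit observation that the first case needs the proof of Theorem~\ref{thm: structure of G} rather than just its statement is accurate and slightly more careful than the paper's phrasing.
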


 A game $G$ is a \textit{dicot}\footnote{In normal play these are called all-small but the term refers to the fact that the value of the game is an infinitesimal. This is not true under other winning conventions, and is unknown if they are infinitesimal in cr-games. The term `dicotic' was first coined for this class of games but in the literature authors have shortened it to dicot.} if, in any position, either both players have a move or the game is over \cite{Siege2013}. Dicot games are examples of cases where $0\succ G$ does not mean that $\ocr(G) = \mathcal{R}$.

\begin{lemma}\label{lem:dicot} Let $G\in\SCR$ be a  dicot game and $G\ne 0$, then $\ocr(G)=\TT$ and $0\succ G$.
\end{lemma}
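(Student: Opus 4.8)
The statement has two parts, and I would dispatch the outcome claim first since it is the cleaner induction. I claim that \emph{every} dicot (including $0$) has outcome $\TT$, proved by induction on birthday. If $G=0$ this is immediate. Otherwise $G$ has at least one move (the only moveless position is $\{\,\cdot\mid\cdot\mid\cdot\,\}=0$), and the dicot condition, which requires each position to have both players to move or neither, then forces both $G(L)\ne\emptyset$ and $G(R)\ne\emptyset$, so $S(G)\ne\emptyset$. Every same-round option $G^{i,j}$ is a follower of $G$, hence again a dicot (a follower of a follower is a follower) of strictly smaller birthday, so by the induction hypothesis $\ocr(G^{i,j})=\TT$. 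Thus every entry of $S(G)$ is a draw, and the casework of Theorem~\ref{thm:FTCR} immediately yields $\ocr(G)=\TT$: no Left move has all its responses being Left-wins, and no Left move admits a Right response to a Right-win, so $G$ falls into the drawing case.

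For the order claim, since $G\ne 0$, Corollary~\ref{cor:0unique} gives $0\ne G$ as equivalence classes, so it suffices to prove $0\succeq G$, i.e. $\ocr(X)\ge\ocr(G+X)$ for every $X\in\SCR$. Using the criterion stated just before Theorem~\ref{thm:equiv_po}, I must show Right does at least as well in $G+X$ as in $X$. The plan is a mirroring strategy for Right: fix a Right strategy $s$ that is optimal in $X$ alone; in $G+X$, whenever Left moves in the $X$-component Right replies in $X$ according to $s$, and whenever Left moves in the $G$-component Right replies in $G$. The dicot property is exactly what makes the second option always available: if Left has a move in the current $G$-component $G'$ then $G'(L)\ne\emptyset$, and since $G'$ is a dicot (a follower of $G$) this forces $G'(R)\ne\emptyset$. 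Under this strategy the $G$-component remains a dicot at every round-start (it advances only through same-round options $G^{i,j}$, which are the top-left block of Figure~\ref{fig: game matrix_G+H}), and the $X$-component evolves exactly as in an $s$-play of $X$ alone, merely paused on the rounds in which Left plays inside $G$.

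The verification is the heart of the matter, and the ending condition is the main obstacle. The key invariant is that at every round-start the $X$-component sits at a position reachable immediately after one of Right's $s$-replies (or is the initial $X$), so its $X$-alone outcome is still $\le\ocr(X)$; when $\ocr(X)=\RR$ this outcome stays exactly $\RR$. From this invariant I would extract two facts. First, Right never gets stuck before Left: if Left has a move it is either in $G'$, where the dicot property supplies a Right reply, or in $X'$ with $X'(L)\ne\emptyset$, and in the latter case $X'(R)=\emptyset$ would make $X'$ a Left-win terminal, contradicting the invariant, so Right has a reply. This already rules out $\ocr(G+X)=\LL$, which settles the case $\ocr(X)=\TT$. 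Second, when $\ocr(X)=\RR$ I must upgrade ``not a loss'' to ``a win'': a drawn ending would require $G'(L),G'(R),X'(L),X'(R)$ all empty at some round-start, but $X'(R)=\emptyset$ is incompatible with the invariant keeping the $X$-component at a Right-win position (which necessarily has $X'(R)\ne\emptyset$). Hence no drawn ending occurs and $\ocr(G+X)=\RR$.

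The delicate point throughout, and where I expect to spend the most care, is that the $X$-subgame is interleaved with and repeatedly paused by play in $G$, so one cannot simply quote the value of $X$; the argument succeeds only because a dicot component is a draw on its own and never hands Left a free tempo, so answering Left inside $G$ costs Right nothing in the $X$-race. Combining the two cases (and the trivial case $\ocr(X)=\LL$) gives $\ocr(G+X)\le\ocr(X)$ for all $X$, hence $0\succeq G$; together with $0\ne G$ this yields $0\succ G$, and with the first paragraph we conclude $\ocr(G)=\TT$ and $0\succ G$.
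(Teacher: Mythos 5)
Your proof is correct, and its core --- Right mirroring by replying in whichever summand Left just played, using the dicot property to guarantee a reply inside the $G$-component and an optimal $X$-strategy to guarantee a reply inside $X$, then checking that the forced ending is a draw or a Right win --- is exactly the paper's argument, just carried out with much more care about the ending condition and the invariant on the $X$-component. The one genuine difference is your first paragraph: you establish $\ocr(G)=\TT$ by a separate induction showing every entry of $S(G)$ is a draw and then invoking the casework of Theorem~\ref{thm:FTCR}, whereas the paper's proof only yields $\ocr(G)\leq\TT$ (from the mirroring argument with $X=0$) and leaves the reverse inequality --- which needs either Left mirroring by symmetry or your induction --- unstated. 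Your treatment of that half is the more complete one.
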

\begin{proof} Let $G\in \SCR$ be a dicot game.
For any $X \in \SCR$, if Right wins or draws $X$,
then by replying in the same summand of $G+X$, Right will always have a response to Left's move, thus $\ocr(G+X)\leq \TT$.
 In addition, if Right wins $X$, then in $G+X$, eventually Right will have a move in a follower of $X$ but Left will not. Therefore, $\ocr(G+X)=\RR$ and thus $0\succ G$.
\end{proof}

\begin{theorem}\label{thm:int_ineq} Let $t,j,k$ be positive integers. 
\begin{enumerate}
\item $\ul{j}+\ul{k} = \ul{j+k}$ and the same is true if $j$ and $k$ are negative integers. 
\item  $\ul{k} \succ \ul{k-1}$. Also $\ul{-k+1}\succ \ul{-k}$; 
\item $\{\ul{k-1}\mid \ul{k}\mid \ul{k+t}\}= \ul{k}$ and $\{\ul{-k-t}\mid \ul{-k}\mid \ul{-k+1}\}= \ul{-k}$
\end{enumerate}
\end{theorem}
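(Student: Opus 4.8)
The plan is to prove the three parts in sequence, since each feeds the next. For part (1), I would induct on $b(\ul{j})+b(\ul{k})$ (equivalently on $|j|+|k|$), handling $j,k>0$ and $j,k<0$ as Left--Right mirrors of each other. The base cases $j=0$ or $k=0$ are Lemma \ref{lem:zero}. For the inductive step with $j,k>0$, the game $\ul{j}+\ul{k}$ has exactly the two Left moves to $\ul{j-1}+\ul{k}$ and $\ul{j}+\ul{k-1}$, no Right move, and no same-round move; by induction both options equal $\ul{j+k-1}$. Since there is no Right move, condition (2) of Theorem \ref{thm:dom2} is vacuous, so the two equal Left options allow me to delete one, leaving a single Left option equal to $\ul{j+k-1}$ and nothing else, i.e. $\ul{j+k}=\{\ul{j+k-1}\mid\cdot\mid\cdot\}$. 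The one routine point is that replacing a Left option by an equal game leaves the game unchanged; this substitution of equals is not stated separately but follows from the same comparison technique, and I would record it in passing.

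For part (2), I would obtain $\ul{k}\succeq\ul{k-1}$ algebraically: by part (1), $\ul{k}=\ul{1}+\ul{k-1}$, and Theorem \ref{thm: structure of G} gives $\ul{1}\succ 0$, so Theorem \ref{thm:order preserving} together with Lemma \ref{lem:zero} yields $\ul{1}+\ul{k-1}\succeq 0+\ul{k-1}=\ul{k-1}$. For strictness I would exhibit a separating game. A short induction, using that $\ul{a}+\ul{-b}$ has a single $1\times 1$ same-round option reducing $(a,b)$ to $(a-1,b-1)$, shows that $\ocr(\ul{a}+\ul{-b})$ equals $\LL$, $\TT$, or $\RR$ according as $a>b$, $a=b$, or $a<b$. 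Taking $X=\ul{-(k-1)}$ then gives $\ocr(\ul{k}+X)=\LL$ while $\ocr(\ul{k-1}+X)=\TT$, so $\ul{k}\neq\ul{k-1}$ and hence $\ul{k}\succ\ul{k-1}$. The statement $\ul{-k+1}\succ\ul{-k}$ is the mirror image, using $\ul{-k}=\ul{-1}+\ul{-(k-1)}$, the mirror of Theorem \ref{thm: structure of G} to get $\ul{-1}\prec 0$, and the separating game $X=\ul{k-1}$.

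For part (3), write $W=\{\ul{k-1}\mid\ul{k}\mid\ul{k+t}\}$; I want $W=\ul{k}$, and I note this is exactly the claim that Right's single move (to $\ul{k+t}$) is redundant, since deleting it removes the same-round move and turns $W$ into $\{\ul{k-1}\mid\cdot\mid\cdot\}=\ul{k}$. I would prove the two inequalities separately. The direction $\ul{k}\succeq W$ is clean: assuming Right wins or draws $\ul{k}+X$, I give Right's strategy in $W+X$ --- if Left moves inside $W$, Right answers inside $W$, and the same-round option is $\ul{k}+X$, which he wins or draws by hypothesis; if Left moves in $X$, Right copies his $\ul{k}+X$ response in $X$ and applies induction on $X$. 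For the reverse direction $W\succeq\ul{k}$, assuming Right wins or draws $W+X$ I build Right's strategy in $\ul{k}+X$ by mimicking: when Left plays in $X$ and Right's winning answer in $W+X$ was an $X$-move, induction ($W\succeq\ul{k}$ at the follower) transfers the outcome, and when Left plays inside $W$, Right's answer inside $W$ already lands in $\ul{k}+X$, giving the goal at once.

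The hard part is the remaining ``escape'' case of $W\succeq\ul{k}$: Left plays in $X$ and Right's winning or drawing answer in $W+X$ is his $W$-move, reaching $\ul{k+t}+X^{i,\cdot}$, a position that never occurs in $\ul{k}+X$. Intuitively this cannot help Right, since moving in $W$ hands Left the large Left-only component $\ul{k+t}=\ul{k}+\ul{t}$ with $\ul{t}\succ 0$ (part (1) and Theorem \ref{thm: structure of G}), so $\ul{k+t}+X^{i,\cdot}\succeq\ul{k}+X^{i,\cdot}$; and note that Theorem \ref{thm:Rightdom2} does not apply to delete this $W$-move directly because Right has no alternative move inside $W$. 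The obstacle is converting this into a usable $X$-response for Right in $\ul{k}+X$, because the ``Right-passed'' follower $X^{i,\cdot}$ is not directly comparable to the same-round followers $X^{i,j}$ that $\ul{k}+X$ actually reaches. I expect to close this gap by strengthening the induction hypothesis so that the comparison of $W$ with $\ul{k}$ is carried together with the bookkeeping that dominates Right's $W$-move by his $X$-moves; this is the step I would spend the most care on. The negative identity $\{\ul{-k-t}\mid\ul{-k}\mid\ul{-k+1}\}=\ul{-k}$ then follows as the Left--Right mirror.
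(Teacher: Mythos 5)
Parts (1) and (2) of your proposal are essentially sound. Part (1) follows the paper's own route (expand the disjunctive sum, apply induction to both Left options, then delete the duplicate via Theorem \ref{thm:dom2}), and you are right to flag that both you and the paper silently use a ``replace an option by an equal game'' substitution step. Part (2) is a genuinely different and cleaner derivation of the inequality: the paper argues directly with mimicking strategies, whereas you get $\ul{k}\succeq\ul{k-1}$ algebraically from $\ul{k}=\ul{1}+\ul{k-1}$, $\ul{1}\succ 0$ (Theorem \ref{thm: structure of G}) and Theorem \ref{thm:order preserving}; your separating game $X=\ul{-(k-1)}$ works just as well as the paper's $X=\ul{-k}$. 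One small repair: the paper never states a ``mirror'' of Theorem \ref{thm: structure of G}, so for $0\succ\ul{-1}$ you should argue it directly (as the paper does): in $\ul{-1}+X$ Right has all his moves from $X$ plus one extra and Left has the same moves, so $\ocr(X)\geq\ocr(\ul{-1}+X)$, with strictness from a separating integer.

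The genuine gap is the one you flag yourself in part (3), and it is not optional to close it. In proving $W\succeq\ul{k}$, the case where Left plays $X(i,\cdot)$ and Right's only winning (drawing) reply in $W+X$ is his move inside $W$, reaching $\ul{k+t}+X^{i,\cdot}$, is precisely the content of the theorem --- it is where Right's extra move in $W$ must be shown useless --- and you leave it with only a hope of ``strengthening the induction hypothesis.'' You have already derived the inequality the paper uses to finish: $\ul{k+t}+X^{i,\cdot}\succ\ul{k}+X^{i,\cdot}$ (from parts (1), (2) and Theorem \ref{thm:order preserving}), hence since Right wins (at least draws) $\ul{k+t}+X^{i,\cdot}$ he wins (at least draws) $\ul{k}+X^{i,\cdot}$, and the paper takes this --- reasoning at the level of the Left option $\ul{k}+X^{i,\cdot}$ rather than exhibiting a specific same-round option $\ul{k}+X^{i,j}$ --- as settling Right's reply to $\ul{k}+X(i,\cdot)$. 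Your scruple about the difference between the Right-passed follower $X^{i,\cdot}$ and the same-round followers $X^{i,j}$ is legitimate, but as written your argument simply stops there; either take the paper's final step or supply the bridge you allude to. The other direction $\ul{k}\succeq W$ via the local response landing back on $\ul{k}+X$ is fine, and the $\ul{-k}$ identity inherits the same unresolved case.
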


\begin{proof}  (1) In the next equations,
the second equality is from the definition of disjunctive sum and the third by induction and domination (Theorem \ref{thm:dom2}).

\begin{eqnarray*}
\ul{j}+\ul{k}&=&\{\ul{j-1}\mid  \cdot\mid  \cdot\,\}+\{\ul{k-1}\mid \cdot\mid \cdot\,\}\\
&=&\{\ul{j-1}+ \ul{k}, \ul{j}+ \ul{k-1}\mid  \cdot\mid  \cdot\,\}\\
&=&\{\ul{j+k-1}\mid  \cdot\mid  \cdot\,\}\\
&=&\ul{j+k}
\end{eqnarray*}

A analogous argument holds if $j$ and $k$ are negative.\\

(2) Let $X \in \SCR$. In $\ul{-1}+X$, Right has the same moves as in $X$ plus the extra move of playing in $\ul{-1}$, whereas Left has the same moves. 
Therefore $o(X)\geq o(\ul{-1}+X)$.

If Right can win (draw) $\ul{1}+X$, we claim Right can win (at least draw) $X$. Suppose in $X$,  Left 
plays $X(i,\cdot)$. Now in $\ul{1}+X$, with Left playing the move $\ul{1}+X(i,\cdot)$, Right has a winning (drawing) move to the option $\ul{1}+X^{i,j}$ for some $j$,
and, by induction, Right can win (at least draw) $X^{i,j}$.

Let $k>1$. In $\ul{-k}+X$ Right has one more move than in $\ul{-k+1}+X$ and Left has the same moves.  Therefore, $o(\ul{-k+1}+X)\geq o(\ul{-k}+X)$.

 Suppose Right can win (draw) $\ul{k}+X$. In response to Left's move to $\ul{k-1}+X$, Right has a winning (drawing) 
same-round response to $\ul{k-1}+X^{\cdot,j}$, for some $j$. 
Now in $\ul{k-1}+X$, if Left moves to $\ul{k-2}+X$ then Right moves to $\ul{k-2}+X^{\cdot,j}$. By induction, 
$\ul{k-1}\succ\ul{k-2}$ and so Right wins (at least draws) $\ul{k-2}+X^{\cdot,j}$. If Left's move is $\ul{k-1}+X(i,\cdot)$, then in
 $\ul{k}+X(i,\cdot)$ Right has a winning (drawing) move to $\ul{k}+X^{i,j}$ for some $j$. By induction Right wins (at least draws) $\ul{k-1}+X^{i,j}$ if he wins 
 (draws) $\ul{k}+X^{i,j}$. Therefore, $o(\ul{k}+X^{i,j})\geq o(\ul{k-1}+X^{i,j})$

Note, $\ul{k}\not= \ul{k-1}$ since $\ul{k}-\ul{k}$ is a draw but $\ul{k-1}-\ul{k}$ is a Right win.\\

(3) We want to show $\{\ul{k-1}\mid \ul{k}\mid \ul{k+t}\}= \ul{k}$. Fix $k,t>0$. Let $G=\{\ul{k-1}\mid \ul{k}\mid \ul{k+t}\}$.
We first show that $G\succeq \ul{k}$. Suppose Right can win (draw) $G+X$. Now, in $ \ul{k}+X$, Left can move in either component:
\begin{enumerate}
\item[i.] Suppose Left's move is $\ul{k}(1,\cdot)+X$. Now in $G+X$, Right considers that Left has played the move $G(1,\cdot)+X$ and Right has winning (at least drawing) responses in this round of moves. 
If one is 
$G(1,\cdot)+X(\cdot,j)$ giving the option $\ul{k-1}+X^{\cdot,j}$, then in $\ul{k}(1,\cdot)+X$ there is an option $\ul{k-1}+X^{\cdot,j}$ for Right, which he wins (at least draws). If there are no such winning (drawing) moves then, in $G(1,\cdot)+X$ Right wins (at least draws) by playing the move $G(1,1)+X$, that is, Right wins (at least draws) $\ul{k}+X$.
\item[ii.] Suppose Left's move is $\ul{k}+X(i,\cdot)$. By assumption, Right has a winning (drawing) response in $G+X(i,\cdot)$. If this is playing to $G+X^{i,j}$, then,
by induction, Right wins (at least draws) $\ul{k}+X^{i,j}$ since he wins (draws) $G+X^{i,j}$. Suppose, instead, the winning (drawing) move is  $G(\cdot,1)+X(i,\cdot)$, that is, to the option
$\ul{k+t}+X^{i,\cdot}$. However, $\ul{k+t}+X^{i,\cdot} \succ  \ul{k}+X^{i,\cdot}$ and thus, since Right can win (draw) $\ul{k+t}+X^{i,\cdot}$, then Right can win (at least draw) $\ul{k}+X^{i,\cdot}$.
\end{enumerate}

\noindent
Now we show that $G\preceq \ul{k}$. Suppose Right can win (draw) $\ul{k}+X$. In $G+X$, Left can move in either summand.
 \begin{enumerate}
\item[i.] Suppose Left's move is $G(1,\cdot)+X$. First note that $\ul{k-1}+X\preceq \ul{k}+X$. Since  Right wins (draws) $\ul{k}+X$, by assumption, then in $\ul{k-1}+X$ Right has a winning (at least drawing) move to $\ul{k-1}+X^{\cdot,j}$. Now, in $G(1,\cdot)+X$, Right can also move to 
$G^{1,\cdot}+X^{\cdot,j}$, and therefore Right wins (at least draws).
\item[ii.]  Suppose Left's move is $G+X(i,\cdot)$.  In $\ul{k}+X(i,\cdot)$, Right has a winning (drawing) move to $\ul{k}+X^{i,j}$. Now, by induction, Right wins (at least draws) $G+X^{i,j}$.
\end{enumerate}
\noindent
To prove the second part of $(3)$,  let $G=\{\ul{-k-i}\mid \ul{-k}\mid \ul{-k+1}\}$. To show $G\preceq \ul{-k}$, suppose Right can win (draw) $\ul{-k}+X$. In $G+X$, Left can move in either component.
 \begin{enumerate}
\item[i.] Suppose Left's move is $G(1,\cdot)+X$. Here Right moves  to $\ul{-k}+X$ and wins (at least draws) by assumption. 
\item[ii.] Suppose Left's move is $G+X(i,\cdot)$.  If Right wins  (draws) $\ul{-k}+X(i,\cdot)$ by moving to $\ul{-k}+X^{i,j}$ then Right wins  (at least draws) 
$G+X(i,\cdot)$, by induction, by playing to $G+X^{i,j}$.  If Right wins  (draws) $\ul{-k}+X(i,\cdot)$ by moving to $\ul{-k+1}+X^{i,\cdot}$, he wins  (at least draws) 
$G+X(i,\cdot)$ by moving in $G$ to
$\ul{-k+1}+X^{i,\cdot}$.
\end{enumerate}

 To show $G\succeq \ul{-k}$, suppose Right can win  (draw) $G+X$. In $\ul{-k}+X$, Left can only move in $X$, to $\ul{-k}+X(i,\cdot)$. If Right wins  (draws) $G+X(i,\cdot)$ by moving to $G+X^{i,j}$ then Right wins  (at least draws) $\ul{-k}+X^{i,j}$ by induction. If Right wins  (draws) $G+X(i,\cdot)$ by moving to $-\ul{k+1}+X^{i,\cdot}$, then he wins  (at least draws) $\ul{-k}+X(i,\cdot)$ by also moving to $\ul{-k+1}+X^{i,\cdot}$.
\end{proof}

\begin{observation}
The $\SCR$ games born by Day 
$1$ are: $0 = \{\cdot\mid\cdot \mid\cdot\}$, $\ul{1}=\{0\mid \cdot\mid\cdot\}$, 
$\ul{-1} = \{\cdot \mid \cdot \mid 0\}$, and $\overline{*} = \{0\mid 0\mid 0\}$. This list is exhaustive as Day 
 $1$ games can only have options born by Day 
  $0$ and so options are either empty or 0. 
  
The  partial order on these games is $1 \succ 0 \succ \overline{*} \succ -1$. 
Theorem \ref{thm:int_ineq} shows that $\ul{1} \succ 0 \succ\ul{-1}$. Lemma \ref{lem:dicot} shows that $0\succ \overline{*}$. 
Suppose Right can win (draw) $\overline{*} +X$, then Right can win (at least draw) $\ul{-1}+X$ by playing the corresponding moves.
The exception is if the winning (drawing) response in $\overline{*} +X(1,\cdot)$ is to $0+X^{1,\cdot}$ but then in $\ul{-1}+X^{1,\cdot}$
Right also moves to $0+X^{1,\cdot}$ and wins (at least draws).
\end{observation}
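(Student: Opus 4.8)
The plan is to verify the two assertions of the Observation separately: first that the four listed games exhaust, up to equality, the games of birthday at most one, and then that they form the chain $\ul{1} \succ 0 \succ \overline{*} \succ \ul{-1}$.

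For exhaustiveness I would argue directly from Definition~\ref{def:zero}. A game born by Day $1$ has every option in Day $0$, and Day $0 = \{0\}$, so every Left option, every Right option, and every same-round option is identically $0$. I would then split into cases according to whether $G(L)$ and $G(R)$ are empty. By the bijection of Definition~\ref{def: ruleset_philosophy_1}, the same-round matrix $S(G)$ is nonempty precisely when both $G(L)$ and $G(R)$ are nonempty, in which case all its entries are $0$. When both lists are empty the game is $0$; when only $G(L)$ (respectively only $G(R)$) is nonempty, deleting the duplicated moves via Theorem~\ref{thm:dom2} (respectively Theorem~\ref{thm:Rightdom2}) collapses the game to $\ul{1}$ (respectively $\ul{-1}$); and when both are nonempty, Theorems~\ref{thm:dom2} and~\ref{thm:Rightdom2} reduce the all-zero matrix to a single entry, leaving $\{0\mid 0\mid 0\}=\overline{*}$. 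This accounts for exactly the four games, with $0$ being the lone Day $0$ game included in ``born by Day $1$''.

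For the chain, three of the four consecutive relations are immediate from earlier results: Theorem~\ref{thm:int_ineq}(2) with $k=1$ gives $\ul{1}\succ 0$ and $0\succ \ul{-1}$, while Lemma~\ref{lem:dicot} gives $0\succ \overline{*}$, since $\overline{*}$ is a dicot with $\overline{*}\ne 0$. The full ordering then follows by transitivity of $\succeq$ (Theorem~\ref{thm:equiv_po}) once the remaining link $\overline{*}\succ \ul{-1}$ is in hand.

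The link $\overline{*}\succeq \ul{-1}$ is the step I expect to require the most care, and I would prove it directly using the criterion recalled before Theorem~\ref{thm:equiv_po}: assuming Right can win (draw) $\overline{*}+X$, show Right can win (at least draw) $\ul{-1}+X$ for every $X\in\SCR$, by induction on the birthday of $X$. Since $\ul{-1}$ has no Left move, Left's only moves in $\ul{-1}+X$ are of the form $\ul{-1}+X(i,\cdot)$, matching Left's moves $\overline{*}+X(i,\cdot)$ in the $X$ component; Right copies his response. If that response plays inside $X$, reaching $\overline{*}+X^{i,j}$, then Right moves to $\ul{-1}+X^{i,j}$ and wins (at least draws) by the inductive hypothesis $\overline{*}\succeq \ul{-1}$ applied to the smaller $X^{i,j}$. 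The only other possibility is that Right's response plays inside $\overline{*}$, reaching $0+X^{i,\cdot}$, in which case Right instead plays inside $\ul{-1}$ to the identical position $0+X^{i,\cdot}$. The subtle point is matching same-round options across the two different component structures -- one coming from the single entry of $S(\overline{*})$ and the other from the lone Right move of $\ul{-1}$ -- and checking that these two same-round options genuinely coincide. Finally, strictness $\overline{*}\ne \ul{-1}$ follows by taking $X=0$, where $\ocr(\overline{*})=\TT$ while $\ocr(\ul{-1})=\RR$, so $\overline{*}\succ \ul{-1}$.
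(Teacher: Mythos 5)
Your proposal is correct and takes essentially the same route as the paper: Theorem~\ref{thm:int_ineq} for $\ul{1}\succ 0\succ\ul{-1}$, Lemma~\ref{lem:dicot} for $0\succ\overline{*}$, and the same move-mimicking induction (with the one exceptional case where Right's good response is inside $\overline{*}$, to $0+X^{i,\cdot}$, which Right replicates by playing inside $\ul{-1}$) for $\overline{*}\succeq\ul{-1}$. The extra details you supply --- collapsing duplicated Day-$1$ moves via Theorems~\ref{thm:dom2} and~\ref{thm:Rightdom2}, and checking strictness with $X=0$ --- are points the paper leaves implicit.
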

%%%%%%%%%%%%%%%%%%%%%%%%%%%%%%%%%
%%%%%%%%%%%%%%%%%%%%%%%%%%%%%%%%%

In CGT, the intuitive notion of the negative in normal play or `conjugate' in mis\`ere play, is to turn the board around thereby interchanging the moves, options (and followers) of the two players. 
 For the Cheating Robot model, this is not completely possible. If Right is the Cheating Robot in $G$ and Left the Cheating Robot in $H$, then in $G+H$ play never starts as each waits for the other.

As Corollary \ref{cor:0unique} showed, there are no negatives in $\SCR$ but we use the negative sign for the conjugate for convenience. In $-G$, the Left and Right moves of $G$ are interchanged and negated, thus the Left and Right options are interchanged and negated, but Right remains the Cheating Robot. The same-round options go from $G^{i,j}$ to $-G^{j,i}$.

\begin{definition}$[\normalfont{\textrm{Conjugate of $G$}}]$
Let $G\in \SCR$, then $-G = \left\{-G^{R}\mid S(-G) \mid -G^{L}\right\}$, where Right is still the Cheating Robot. 
\end{definition}

For example, if $G =\{\ul{1}\mid \ul{2}\mid\ul{-2}\}$ and $H=\{\ul{1}\mid \{\ul{-2}\mid \ul{-1}\mid \ul{3}\}    \mid\ul{5}\}$, then 
$-G = \{\ul{2}\mid \ul{-2}\mid\ul{-1}\}$, 
 and $-H = \{\ul{-5}\mid \{\ul{-3}\mid \ul{1}\mid \ul{2}\}    \mid\ul{-1}\}$.

Although, in general, $G-G$ is not $\ul{0}$, we do know the outcome. The next section shows that $G-G$ can equal $\ul{0}$ if the set of games is restricted.
\begin{lemma}\label{lem: G-G}Let $G\in\SCR$ and $G \neq 0$ then $0\succ G-G$.
\end{lemma}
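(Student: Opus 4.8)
The plan is to show $o(G-G)\le\mathcal{D}$ and $G-G\ne 0$, which together give $0\succ G-G$. Since $G-G$ is a dicot (both players have a move whenever one does, because every Left move in $G$ corresponds to a Right move in $-G$ and vice versa), Lemma~\ref{lem:dicot} would immediately yield the result provided $G-G\ne 0$. So the cleanest route is: first argue $G-G$ is a dicot, then invoke Corollary~\ref{cor:0unique} to get $G-G\ne 0$ since $G\ne 0$ guarantees there is a move in $G$ (hence in the sum), and finally apply Lemma~\ref{lem:dicot}.

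First I would verify the dicot claim carefully. A position $K$ is a dicot if in every follower, either both players have a move or neither does. In $G-G$, the Left moves are $G(L)+(-G)$ together with $G+(-G)(L)=G+(-G^R$-conjugated moves$)$; by the conjugate definition, $(-G)(L)=-(G(R))$ and $(-G)(R)=-(G(L))$. Thus Left has a move in $G-G$ exactly when $G(L)\ne\emptyset$ or $G(R)\ne\emptyset$, and symmetrically Right has a move in $G-G$ exactly when $G(R)\ne\emptyset$ or $G(L)\ne\emptyset$. These two conditions coincide, so at the top level both players move or neither does. I would then note this property is preserved under followers: any follower of $G-G$ is a sum of a follower of $G$ with a conjugate of a follower of $G$ (or one summand replaced by a same-round option), and the same correspondence between Left and Right moves via conjugation persists, so the dicot property holds throughout.

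Next I would establish $G-G\ne 0$. Since $G\ne 0$, by Theorem~\ref{thm: structure of G} and the discussion following it (or directly by Corollary~\ref{cor:0unique}), $G$ has at least one move, i.e., $G(L)\cup G(R)\ne\emptyset$. The remark after Theorem~\ref{thm: structure of G} states that if there is a move in either summand then the sum is not $0$; equivalently, Corollary~\ref{cor:0unique} gives that $G+H=0$ forces $G=H=0$. Applying this with $H=-G$: if $G-G=0$ then $G=0$, contradicting $G\ne 0$. Hence $G-G\ne 0$.

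Finally, combining the two facts, $G-G$ is a dicot with $G-G\ne 0$, so Lemma~\ref{lem:dicot} gives $o(G-G)=\mathcal{D}$ and $0\succ G-G$, which is exactly the claim. The main obstacle I anticipate is the dicot verification: I must be sure the Left/Right move correspondence through conjugation survives to all followers, including the same-round options $G^{i,j}\mapsto -G^{j,i}$, so that no follower ends up with a move for only one player. This is a structural induction on the birthday, but it is the step where the argument could go wrong if the conjugate does not interact cleanly with the same-round matrix. Once that is secured, the rest follows directly from the cited results.
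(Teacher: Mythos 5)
The crux of your argument is the claim that $G-G$ is a dicot, and that claim is false in general; this is exactly the step you flagged as the possible weak point, and it is where the proof breaks. At the root the conjugation correspondence does give ``both players can move or neither can,'' but it does not persist to followers. A follower of $G-G$ has the form $K+(-K')$ where $K$ and $K'$ are followers of $G$ that need not be equal (for instance, both players may play a round inside the same copy, or play non-corresponding moves in the two copies), and then Left has a move iff $K(L)\neq\emptyset$ or $K'(R)\neq\emptyset$ while Right has a move iff $K(R)\neq\emptyset$ or $K'(L)\neq\emptyset$ --- conditions that no longer coincide. Concretely, take $G=\{\ul{1}\mid \overline{*}\mid \ul{-1}\}$, which is self-conjugate, so $G-G=G+G$. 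A same-round move with both players in the first copy reaches $\overline{*}+G$, and from there the round in which Left plays in $\overline{*}$ and Right plays in $G$ reaches $0+\ul{-1}=\ul{-1}$, a follower in which Right can move and Left cannot. So $G-G$ is not a dicot and Lemma~\ref{lem:dicot} cannot be invoked.

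The paper avoids this by arguing directly with the mirror (Tweedledum--Tweedledee) strategy: whenever Left moves in one copy, Right answers with the \emph{conjugate} move in the other copy, so that the component retains the symmetric form $K+(-K)$ after every round; in that restricted family of positions the ``both or neither'' property does hold, which is all Right needs to guarantee a response forever, giving $\ocr(X)\geq \ocr(G-G+X)$ and hence $0\succeq G-G$. The invariant that survives play is ``the component is of the form $K+(-K)$,'' not ``the component is a dicot.'' Your second half --- deducing $G-G\neq 0$ from Corollary~\ref{cor:0unique} and upgrading $\succeq$ to $\succ$ --- is correct and matches the paper; you need only replace the dicot argument with the explicit mirroring strategy to obtain $0\succeq G-G$.
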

\begin{proof} For any $X$, if Right can win (draw) $X$, then by playing the corresponding move in the $G$ and $-G$, Right wins (at least draws) $G-G+X$ as well. Therefore $\ocr(X) \geq \ocr(G-G+X)$, which proves $0\succeq G-G$. 

By Corollary \ref{cor:0unique}, $G-G\not= 0$, and the result follows. 
 \end{proof}

%%%%%%%%%%%%%%%%%%%%%%%%%%%%%%%%%
%%%%%%%%%%%%%%%%%%%%%%%%%%%%%%%%%
%%%%%%%%%%%%%%%%%%%%%%%%%%%%%%%%%
%%%%%%%%%%%%%%%%%%%%%%%%%%%%%%%%%
%%%%%%%%%%%%%%%%%%%%%%%%%%%%%%%%%
%%%%%%%%%%%%%%%%%%%%%%%%%%%%%%%%%
%%%%%%%%%%%%%%%%%%%%%%%%%%%%%%%%%
%%%%%%%%%%%%%%%%%%%%%%%%%%%%%%%%%

\section{Simple Hot Games}\label{sec: simple hot games}

In this section we build, and analyze, a subclass of games in which there is, at most, one significant move remaining in each summand of any disjunctive sum. That is, each summand is either an integer or has one move remaining in which a player can gain an advantage.

 \begin{definition}Let $a,b,c$ be integers with $a\geq b\geq c$. A game $G$, in which the players have exactly one move, Left to $\ul{a}$, Right to $\ul{c}$, and $\ul{b}$ is the same-round option,  is a \textit{simple hot}  game. 
The subclass $\SCR_{SH}$ consists of all disjunctive sums of 
 simple hot games and integers.
 \end{definition}
  This subclass is closed under options and has nicer properties than $\SCR$. In particular, integer games add the same as addition for integers and there are non-trivial games which are equivalent to $0$. Note that in this section we will only be considering games in the subclass, $\SCR_{SH}$, and so we will omit the subscript $SH$ when using $\equiv$ and $\succeq$.  The only change in Definition \ref{def: equality}, the definition of equality and inequalities, is that, now,  $X\in \SCR_{SH}$. Since $\SCR_{SH}$ is a subset of $\SCR$, the conclusions of Lemma \ref{lem:zero} and Theorem \ref{thm:int_ineq}
    still hold. In addition, the next result shows that Lemma~\ref{lem: G-G} holds with the possibility of equality.
\begin{lemma}\label{lem:G-G_2}
Let $G\in\SCR_{SH}$ then $0\succeq G-G$.
\end{lemma}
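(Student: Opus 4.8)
The plan is to observe that the non-strict inequality $0\succeq G-G$ is already essentially contained in the proof of Lemma~\ref{lem: G-G}: the first sentence there establishes $0\succeq G-G$ by a fixed strategy for Right, and that part of the argument never uses $G\neq 0$ nor any property of the test game beyond its membership in $\SCR$. Since $\SCR_{SH}\subseteq\SCR$, restricting the class of test games to $X\in\SCR_{SH}$ can only make the defining condition of $\succeq$ easier to satisfy, so the inequality transfers directly. Before invoking it I would record that the statement is well-posed inside this section: $\SCR_{SH}$ is closed under conjugation, since the conjugate of a simple hot game $\{\ul{a}\mid\ul{b}\mid\ul{c}\}$ is $\{\ul{-c}\mid\ul{-b}\mid\ul{-a}\}$, which is again simple hot because $a\geq b\geq c$ gives $-c\geq -b\geq -a$; it is also closed under disjunctive sum; hence $G-G$ and every $G-G+X$ genuinely lie in the subclass.

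The strategic content I would spell out is the mirroring strategy for Right in $G-G+X$. Fix $X\in\SCR_{SH}$ and suppose Right can win or draw $X$. Right pairs the summand $G$ with the summand $-G$ and maintains the invariant that at the start of each round these two summands are a conjugate pair $H$ and $-H$ for a common follower $H$ of $G$. In any round: if Left plays in $X$, Right answers inside $X$ by a fixed winning or drawing strategy, leaving the paired copies untouched; if Left plays in one of $H$ or $-H$, Right plays the conjugate move in the other summand, which is always available because conjugation sends the Left moves of $H$ to the Right moves of $-H$ and vice versa, restoring a pair $H'-H'$. Thus Right always has a reply while Left moves in the paired copies, the paired part stays balanced (Left can move in $H-H$ exactly when Right can), and the $X$-component evolves as in a play of $X$ alone. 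This is precisely the reasoning of Lemma~\ref{lem: G-G} and gives $\ocr(G-G+X)\leq \ocr(X)$ for all $X\in\SCR_{SH}$, that is, $0\succeq G-G$.

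The one genuine difference from Lemma~\ref{lem: G-G}, and the point worth flagging, is why only $\succeq$ and not $\succ$ can be asserted. In $\SCR$ the strict inequality was obtained by combining $0\succeq G-G$ with Corollary~\ref{cor:0unique}, which forces $G-G\neq 0$ whenever $G\neq 0$. That upgrade is unavailable here: the equivalence class of $0$ in $\SCR_{SH}$ is strictly larger than $\{0\}$, since the subclass contains nontrivial games equivalent to $0$, so it may happen that $G-G\equiv 0$ and the comparison becomes an equality. Consequently there is no tension with Lemma~\ref{lem: G-G}, and the most one can claim within $\SCR_{SH}$ is the non-strict bound stated. The main (and modest) obstacle is therefore not the inequality itself but the two bookkeeping checks: that the mirroring reply keeps the position inside $\SCR_{SH}$, and that the weakening from $\succ$ to $\succeq$ is exactly what the enlarged zero-class permits.
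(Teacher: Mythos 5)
Your proposal is correct and takes essentially the same route as the paper, whose entire proof is the one-line observation that the first part of the proof of Lemma~\ref{lem: G-G} (the mirroring argument giving $0\succeq G-G$) never uses $G\neq 0$ and applies verbatim when the test games are restricted to $\SCR_{SH}$. Your additional checks — closure of $\SCR_{SH}$ under conjugation and sums, and the explanation of why the strict inequality is lost because the zero-class is larger in the subclass — are accurate elaborations of what the paper leaves implicit.
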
    
\begin{proof}    
Since the first part of the proof of Lemma~\ref{lem: G-G} holds for all $\SCR$ games, it holds for simple hot games.
\end{proof}
    
We now develop the properties of $\SCR_{SH}$ that do not exist in $\SCR$.

\begin{lemma}\label{lem:addition_of_integers} If $a$ and  $c$ are  integers, then  
$\underline{a}+\underline{c}\equiv \underline{a+c}$.
\end{lemma}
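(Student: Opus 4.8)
The plan is to reduce the general statement to the single mixed-sign identity $\ul{1}+\ul{-1}\equiv \ul{0}$ and then prove that identity by a strategy argument. First observe that $\equiv$ here means $\equiv_{\SCR_{SH}}$, which is coarser than equality on $\SCR$, so the cases where $a,c$ have the same sign (or one of them is $0$) are already settled: Theorem~\ref{thm:int_ineq}(1) gives $\ul{a}+\ul{c}=\ul{a+c}$ in $\SCR$, and Lemma~\ref{lem:zero} disposes of a zero summand. Since $\SCR_{SH}$ is closed under disjunctive sums, the proof of Theorem~\ref{thm:order preserving} applies verbatim with $X$ ranging over $\SCR_{SH}$, so $\equiv$ is a congruence for $+$. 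Using Theorem~\ref{thm:int_ineq}(1) to write $\ul{a}\equiv\ul{1}+\cdots+\ul{1}$ and $\ul{c}\equiv\ul{-1}+\cdots+\ul{-1}$ when $a>0>c$, together with commutativity and associativity (Theorem~\ref{thm: properties}), I would cancel $\min(a,|c|)$ pairs $\ul{1}+\ul{-1}\equiv 0$ and be left with $\ul{a+c}$.

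Writing $Y=\ul{1}+\ul{-1}=\{\ul{-1}\mid \ul{0}\mid \ul{1}\}$, one inequality is free: $0\succeq Y$ is exactly Lemma~\ref{lem:G-G_2} applied to $G=\ul{1}$. All the content is in $Y\succeq 0$. It is worth stressing that this inequality is \emph{false} in $\SCR$: the position $X=\{\ul{-3}\mid \ul{0}\mid \ul{-3}\}$ (the spoiler produced in the proof of Theorem~\ref{thm: structure of G} for $G=Y$, since $b(Y)=2$) has Right winning $Y+X$ while $X$ itself is only a draw, so by Lemma~\ref{lem: G-G} it already separates $Y$ from $0$ in $\SCR$. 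This $X$ violates the ordering $a\geq b\geq c$ and hence does \emph{not} lie in $\SCR_{SH}$; the whole point is that such spoilers are excluded, and the simple-hot ordering is precisely what must be exploited.

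To prove $Y\succeq 0$ I would follow the template used throughout the paper: assume Right can win (draw) $Y+X$ for $X\in\SCR_{SH}$ and show Right can win (at least draw) $X$, by induction on $b(X)$. Given Left's first move $X(i,\cdot)$ in a component $C$ of $X$, I play the same move in $Y+X$ and borrow Right's winning (drawing) reply. If that reply is inside $X$, it is a legal same-round option $X^{i,j}$ and induction finishes. The only other case is that Right answers with his $\ul{-1}$ move, landing in $\ul{1}+X^{i,\cdot}$; since $\ul{1}\succ 0$ (Theorem~\ref{thm:int_ineq}(2)), Right also wins (draws) $X^{i,\cdot}$, and this \lq pass\rq{} option must be converted into a genuine reply in $X$. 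When $C$ is a simple hot game $\{\ul{a}\mid\ul{b}\mid\ul{c}\}$, Right instead replies in $C$ itself, reaching $X^{i,j}=\ul{b}+(\text{rest})$; because $a\geq b$ we get $\ul{a}\succeq\ul{b}$ (Theorem~\ref{thm:int_ineq}(2)) and hence $X^{i,\cdot}=\ul{a}+(\text{rest})\succeq X^{i,j}$ by monotonicity (Theorem~\ref{thm:order preserving}), so Right still wins (at least draws) $X^{i,j}$. This is exactly the step where membership in $\SCR_{SH}$ is used.

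The main obstacle is the remaining case, when Left's move is in a \emph{positive-integer} component $\ul{n}$ (so $n\geq 1$), where Right has no reply inside that component and the naive translation collapses $\ul{1}+X^{i,\cdot}$ back to $X$. Handling this cleanly is the delicate part: I expect to strengthen the induction hypothesis and induct on a finer measure than $b(X)$ alone — for instance the total birthday (equivalently, the total number of remaining moves) of the summands — so that peeling a unit off an integer component counts as a legitimate decrease, and to treat all-integer sub-sums directly via Theorem~\ref{thm:int_ineq}(1). Once the pass-to-same-round conversion and this integer bookkeeping are in place, $Y\succeq 0$ follows, yielding $\ul{1}+\ul{-1}\equiv 0$ and hence the lemma.
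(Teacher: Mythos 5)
Your architecture coincides with the paper's: settle the same-sign cases by Theorem~\ref{thm:int_ineq}(1) and Lemma~\ref{lem:zero}, reduce the mixed case to $\ul{1}+\ul{-1}\equiv 0$ and pair off units, get $0\succeq \ul{1}+\ul{-1}$ for free from Lemma~\ref{lem: G-G}, and prove the reverse inequality by borrowing Right's strategy from $\ul{1}+\ul{-1}+X$, converting a reply in $\ul{-1}$ into a local reply in the simple hot game Left just played --- the step where $a\geq b$, hence membership in $\SCR_{SH}$, is used. That conversion is exactly the paper's argument (the paper compares $\ul{1}+\ul{a}+X'$ with $\ul{1}+\ul{-1}+\ul{b}+X'$ rather than stripping the $\ul{1}$ first, but the content is identical), and your observation that the spoiler from Theorem~\ref{thm: structure of G} violates $a\geq b\geq c$ correctly locates why the inequality survives the restriction to $\SCR_{SH}$.

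The gap is the case you yourself flag as the main obstacle: Left moving in a positive-integer summand of $X$. You do not resolve it; you only say you ``expect to'' pass to a finer induction measure, and it is not clear that this works, since after Right's $\ul{-1}$-reply the position $\ul{1}+X^{i,\cdot}=\ul{1}+\ul{n-1}+(\text{rest})$ has the same total number of remaining moves as $X$, so your proposed measure has not decreased. The paper sidesteps the case entirely: before proving $\ul{1}+\ul{-1}\equiv 0$ it (i) disposes of all-integer test positions $X$ by directly counting each player's moves, and (ii) for mixed $X$ uses Theorem~\ref{thm:int_ineq}(1) to absorb every integer summand of $X$ into $\ul{a}$ and $\ul{c}$, so that in the remaining argument every Left move in $X$ lies in a simple hot game and your conversion always applies. (Alternatively the case closes in one line: if Left moves in $\ul{n}$ and Right's borrowed winning reply is the $\ul{-1}$-move, the resulting position $\ul{1}+\ul{n-1}+(\text{rest})$ equals $\ul{n}+(\text{rest})=X$ by the already-established same-sign addition, so the assumed bound $\ocr(\ul{1}+X^{i,\cdot})\leq \ocr(\ul{1}+\ul{-1}+X)$ immediately gives $\ocr(X)\leq \ocr(\ul{1}+\ul{-1}+X)$ and the whole claim is finished for this $X$.) Either repair is routine, but as written your proof is incomplete at precisely this point, and the all-integer base case still needs the explicit counting argument rather than an appeal to induction.
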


\begin{proof} 
If $a$ and $c$ are both non-positive or both non-negative, the result follows from Lemma \ref{lem:zero} and Theorem~\ref{thm:int_ineq}. 
We now suppose that $a > 0 > c$ and we must prove that for all $X$ in $\SCR_{SH}$, $\ocr(\ul{a}+\ul{c}+X) = \ocr(\ul{a+c}+X)$. We will proceed by induction on the birthday of the disjunctive sum of the simple hot games in $X$.

First, suppose $X$ is a disjunctive sum of integers. By Theorem \ref{thm:int_ineq}, we can combine all positive integer components, and all negative integer components respectively, to obtain $X\equiv \ul{p}+\ul{q}$ for some $p\geq 0\geq q$. In both $\ul{a}+\ul{c}$ and $\ul{a+c}$,  each players can only move in their integers; positive for Left, negative for Right. In  $\ul{a}+\ul{p}+\ul{c}+\ul{q}=\ul{a+p}+\ul{c+q}$, Left has exactly  $a+p$ moves and Right $-c-q$ for a difference of $a+p+c+q$. In the second, the difference is $(a+c)+p+q$. Thus $\ocr(\ul{a}+\ul{c}+X)=\ocr(\ul{a+c}+X)$.
 
Suppose that $X$ is the disjunctive sum of integers and simple hot games.
By Theorem \ref{thm:int_ineq},  $X\equiv \ul{p}+\ul{q}+X'$ where $X'$ is the disjunctive sum of simple hot games.
Now $\ocr(\ul{a}+\ul{c}+X) = \ocr(\ul{a+p}+\ul{c+q}+X')$.
Thus, for the rest of the proof, we may now assume that $X$ has no integer summand.

As the first step, we prove  $G=\underline{1}+\underline{-1}\equiv 0$. Let $X\in \SCR_{SH}$. First, we prove $\ocr(X)\geq \ocr(G+X)$. By Lemma ~\ref{lem: G-G}, $0\succ \ul{1}+\ul{-1}$, thus  $o(0+X)\geq o(G+X)$.

Second, we show $\ocr(G+X)\geq \ocr(X)$. If $\ocr(G+X)=\LL$ there is nothing to show, so we may assume that, in $G+X$, Right has a winning (or drawing) response to every Left move. In $X$, Left moves to $X(i,\cdot)$ and Right considers his winning (drawing) response in $\ul{1}+\ul{-1}+X(i,\cdot)$. Suppose 
 a best response is to
$\ul{1}+\ul{-1}+X^{i,j}$. 
In $X$, he responds to $X^{i,j}$ and $\ocr(\ul{1}+\ul{-1}+X^{i,j})\geq \ocr(X^{i,j})$, by induction. Note, if $\ul{1}+X(i,\cdot)$ is a good response (winning or drawing) for Right, then Right playing in the same hot game as Left is even better. This follows since the first pair of moves  yields $\ul{1}+\ul{p}+X'$ for some integer $p$ and the
 second yields $\ul{1}+\ul{-1}+\ul{q}+X'$. We know $p\geq q$ so that 
$\ul{p}\succeq\ul{q}\succ \ul{-1}+\ul{q}$.

We have shown that for $G = \underline{1}+\underline{-1}$,  $o(G+X) = o(X)$ for all $X \in \SCR_{SH}$ and thus $G \equiv 0$.
  
Now suppose that $a\geq 1$ and $-1 \geq c$, then by Theorem \ref{thm:int_ineq}, again, 
\begin{eqnarray*}
\ul{a}+\ul{c} &\equiv& \sum_{i=1}^a\ul{1}+\sum_{j=1}^{-c}\ul{-1}\\
            &\equiv& \ul{1}+\ul{-1} +\sum_{i=1}^{a-1}\ul{1}+\sum_{j=1}^{-c-1}\ul{-1}
\end{eqnarray*}
By the first part of the proof $\ul{1}+\ul{-1}\equiv 0$ thus
\[\ul{1}+\ul{-1} +\sum_{i=1}^{a-1}\ul{1}+\sum_{j=1}^{-c-1}\ul{-1} \equiv \sum_{i=1}^{a-1}\ul{1}+\sum_{j=1}^{-c-1}\ul{-1}.\]
Pairing off the $\ul{1}$ and $\ul{-1}$ summands gives $\ul{a}+\ul{c}\equiv \ul{a+c}$. 

\end{proof}

Even in simple hot games, conjugates are not necessarily inverses. For example, let $G=\{\ul{2} \mid\ul{0}  \mid\ul{-4} \}$ and $H=\{\ul{3} \mid\ul{0}  \mid\ul{-1} \}$. Now $\ocr(G+\ul{0})=\TT$ but Right wins $G+H-H$ by responding in whichever of $G$ and $H-H$ Left didn't move. Thus $H-H\not\equiv 0$.

 The question for the players is how to play in a disjunctive sum of simple hot games and integers.
For example, consider $\{\ul{10}\mid \ul{8} \mid \ul{-3}\} + \{\ul{5}\mid \ul{-2} \mid \ul{-4}\}+\ul{2}+\ul{-1}$.
It seems clear  that Left should play in the first component and Right in the second, and that neither player wants to play in the integers. We prove this in the next  theorem, which is also an \emph{integer translation} result. We will use Lemma \ref{lem:addition_of_integers}, i.e., integers add, without referencing the Lemma each time.

\begin{theorem}\label{lem:integer translation} Let $H=\{\ul{a}\mid \ul{b} \mid \ul{c}\}$ be a simple hot game. If $d$ is an integer then
$H +\ul{d}= \{\ul{a+d}\mid \ul{b+d}\mid \ul{c+d}\}$.
\end{theorem}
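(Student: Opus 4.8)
The plan is to prove $H + \ul{d} = \{\ul{a+d} \mid \ul{b+d} \mid \ul{c+d}\}$ by showing mutual inequality, i.e. that $H + \ul{d} \succeq \{\ul{a+d} \mid \ul{b+d} \mid \ul{c+d}\}$ and the reverse, since $=$ reduces to $\succeq$ in both directions (noting that in $\SCR_{SH}$ equality means $\equiv$, the restriction of the test games to $\SCR_{SH}$). Denote the target game by $K = \{\ul{a+d} \mid \ul{b+d} \mid \ul{c+d}\}$. The key structural observation is that both $H + \ul d$ and $K$ are single simple-hot positions (well, $H + \ul d$ is a sum, but it has one genuine hot move apiece plus the integer), so I want to track, for an arbitrary test game $X \in \SCR_{SH}$, how Right's winning or drawing responses transfer between $H + \ul d + X$ and $K + X$. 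Throughout I will use Lemma \ref{lem:addition_of_integers} freely to collapse integer summands: an option that lands in $H + \ul d$ after one player moves in the integer, or an option of $K$, simplifies to an explicit integer expression.

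First I would handle the easy direction by comparing options directly. A move by Left in $K$ leads to $\ul{a+d} + X$, which by integer addition equals $\ul a + \ul d + X$; this is exactly the position reached in $H + \ul d + X$ when Left plays her hot move in $H$ and neither player touches the integer. Right's same-round option in $K$ is $\ul{b+d}$, matching $\ul b + \ul d$, the same-round outcome of $H + \ul d$; and Right's move in $K$ gives $\ul{c+d} = \ul c + \ul d$. So every option of $K$ corresponds, after integer collapse, to a reachable option of $H + \ul d$. The subtlety is that $H + \ul d$ has \emph{extra} options that $K$ does not: either player may instead move in the integer summand $\ul d$ (if $d \neq 0$), producing options like $\ul a + \ul{d-1}$ (Left) or $\ul c + \ul{d+1}$ (Right) together with the corresponding same-round entries. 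The argument must show these extra options never help the mover beyond what $K$ already offers, which is where the ordering $a \geq b \geq c$ and the monotonicity $\ul{k} \succ \ul{k-1}$ from Theorem \ref{thm:int_ineq} do the work.

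The heart of the proof is the strategy-stealing / response-copying argument used repeatedly above (as in Theorems \ref{thm:dom2}, \ref{thm:int_ineq}(3)): assume Right can win (draw) one side and construct a winning (drawing) strategy on the other, using induction on the birthday of $X$. For $H + \ul d + X \succeq K + X$: assuming Right wins (draws) $H + \ul d + X$, I take a Left move in $K + X$ and map it to the corresponding Left move in $H + \ul d + X$ (Left's hot move in $K$ maps to Left's hot move in $H$, a Left move in $X$ maps to itself), copy Right's successful response, and translate it back through the integer-addition identifications, invoking induction when Right replies inside $X$. The reverse inequality is analogous but now Right, playing in $K + X$, must also cover Left's moves into the integer $\ul d$ that exist in $H + \ul d + X$; here I would argue that if Left plays $\ul a + \ul{d-1} + X$ in $H + \ul d$, Right does at least as well as against Left's hot move, because $\ul{a+d-1} \preceq \ul{a+d}$ and Right's response to the hot line (landing him in $\ul{b+d}$ or $\ul{c+d}$) dominates, using $b \geq c$ and integer monotonicity.

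I expect the main obstacle to be the bookkeeping when $d \neq 0$ and both the integer summand and the test game $X$ offer simultaneous-move interactions: I must verify that Right's same-round responses in the $S$-matrix of $H + \ul d + X$ (whose entries mix $G^{i,\cdot} + H^{\cdot,j}$-type cross terms from the disjunctive-sum matrix in Figure \ref{fig: game matrix_G+H}) really do collapse, via Lemma \ref{lem:addition_of_integers}, to the clean integer-shifted entries of $K$, and that no cross term between the integer move and an $X$-move creates a response available to the losing player. Phrasing everything as ``win (at least draw)'' and carefully matching each entry of the two same-round matrices after integer collapse should resolve this, but it is the step most prone to a missed case. A cleaner alternative, which I would attempt first to shorten the argument, is to prove the statement only for $\ul d = \ul 1$ and $\ul d = \ul{-1}$ by direct option comparison and then bootstrap to general $d$ by induction on $|d|$ using associativity (Theorem \ref{thm: properties}) together with $\ul d = \ul{d-1} + \ul 1$; this isolates all the real content into the unit-shift case.
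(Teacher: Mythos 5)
Your overall plan coincides with the paper's: both reduce the theorem to showing that the extra moves available in the integer summand $\ul{d}$ are dominated, and both establish domination by the usual strategy-copying induction over a test game $X$. The paper packages this as deleting a dominated row of $S^+(H+\ul{d})$ when $d>0$ (Left's move in the integer) and a dominated column when $d<0$ (Right's move in the integer), after which the surviving matrix is literally $S^+(\{\ul{a+d}\mid\ul{b+d}\mid\ul{c+d}\})$, with the $(\emptyset,\emptyset)$ entry handled by induction.

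However, there is a genuine gap at exactly the step that carries the weight of the proof. You describe Left's move in the integer as ``producing options like $\ul{a}+\ul{d-1}$'' and later argue this move is harmless ``because $\ul{a+d-1}\preceq \ul{a+d}$.'' That is not the right follower: when Left plays in $\ul{d}$, the resulting Left option is $H+\ul{d-1}$, with the hot game $H$ still intact --- Left has banked an integer move while retaining her threat to $\ul{a}$. The comparison actually needed is $\ul{a+d}\succeq H+\ul{d-1}$ (Left's hot-move option versus her integer-move option), together with $\ul{b+d}\succeq \ul{c+d-1}$ for the corresponding same-round entries (the latter covering Right's local response and using $b\geq c$). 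The first of these is not an instance of integer monotonicity; it compares a position containing a hot game against an integer and requires its own strategy argument --- in the paper it occupies a three-case induction showing $\ocr(\ul{a+d}+X)\geq \ocr(H+\ul{d-1}+X)$, one case of which is Left moving in the integer yet again, to $H+\ul{d-2}+X$. The symmetric issue arises for $d<0$, where you need $\ul{c+d}\preceq H+\ul{d+1}$. Your fallback of proving only $d=\pm 1$ and bootstrapping via $\ul{d}=\ul{d-1}+\ul{1}$ is structurally sound given Lemma \ref{lem:addition_of_integers} and associativity, but it does not sidestep the problem: already at $d=1$ Left's integer move leads to $H+\ul{0}=H$, and you must show $\ul{a+1}\succeq H$ by the same kind of argument.
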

\begin{proof}
Note that if a player moves in an integer, the move and the option are identified.\\

 If $d=0$ then there is nothing to prove.

Suppose that $d>0$. Recall that Right has no move in $\ul{d}$. See Figure~\ref{fig: game matrix_G+H_d positive} for the game matrix. We will show that the second row is dominated.

\begin{figure}[ht]
\begin{center}
$
\begin{matrix}
\\
$$H(1,\cdot)+\ul{d}$$\\
\phantom{stuff}\\
$$H+\ul{d-1}$$\\
\phantom{stuff}\\
$$\emptyset$$\\
\end{matrix}
\begin{matrix}
\begin{matrix}
$$\quad\quad H(\cdot,1)+\ul{d}\quad\quad$$&$$\quad \quad\emptyset \quad \quad$$\\
\end{matrix}\\
     \left\llbracket
     \begin{matrix}
$$\quad\ul{b+d}\quad$$&$$\quad\ul{a+d}$$\\
&\\
$$\quad\ul{c+d-1}\quad$$&$$\quad H+\ul{d-1}$$\\
&\\
$$\quad\ul{c+d} \quad$$&$$\quad H+\ul{d}$$\\
\end{matrix}
\right\rrbracket
\end{matrix}$
\caption{The matrix $S^+(H+\ul{d})$, where $d>0$.}\label{fig: game matrix_G+H_d positive}
\end{center}
\end{figure}

By assumption, we have $b\geq c$ and since $d>d-1$, then $\ul{b+d}\succ \ul{c+d-1}$. 

We now prove that   $\ocr(\ul{a+d}+X)\geq  \ocr(H+\ul{d-1}+X)$. If  $\ocr(\ul{a+d}+X)=\LL$ there is nothing to prove.

Suppose Right can win (or draw) $\ul{a+d}+X$ then we claim Right can win (at least draw) $H+\ul{d-1}+X$.

(1) Suppose Left's move is $H+\ul{d-2}+X$. In $\ul{a+d}+X$, Right considers Left's move $\ul{a+d-1}+X$. If Right's best response is in $X$, then
Right plays the same move in $H+\ul{d-2}+X$ and wins (at least draws) by induction. If Right's best response is to $\ul{a+1+d-1} +X=\ul{a+d}+X$,
then, in $H+\ul{d-2}+X$, Right plays to $\ul{c+d-2}+X$, which is less than $\ul{a+d}+X$, and thus Right wins (at least draws). 

(2) Suppose Left's move is $H(1,\cdot)+\ul{d-1}+X$. In $\ul{a+d}+X$, Right considers Left's move $\ul{a+d-1}+X$. If Right's best response is in $X$, then
Right plays the same move in $H(1,\cdot)+\ul{d-1}+X$ and wins by induction. If Right's best response is to $\ul{a+1+d-1}+X$, then in 
$H(1,\cdot)+\ul{d-1}+X$ Right plays to $\ul{b+d-1}+X$, which is less than  $\ul{a+1+d-1}+X$, and so Right wins (at least draws).

(3) Suppose Left's move is $H+\ul{d-1}+X(i,\cdot)$. In $\ul{a+d}+X$, Right considers Left's move $\ul{a+d}+X(i,\cdot)$. If Right's best response is to $\ul{a+1+d}+X^{i,\cdot}$,
then, in $H+\ul{d-1}+X(i,\cdot)$, Right plays to $\ul{c+d-1}+X^{i,\cdot}$ which is less than $\ul{a+1+d}+X^{i,\cdot}$. If Right's best response is in 
$X(i,\cdot)$, then Right plays the same move in $H+\ul{d-1}+X(i,\cdot)$ and wins (at least draws) by induction.

Thus, $\ocr(\ul{a+d}+X)\geq  \ocr(H+\ul{d-1}+X)$.

The second row in Figure~\ref{fig: game matrix_G+H_d positive} is dominated and thus we have that $S^+(H+\ul{d})$ reduces to the matrix in Figure~\ref{fig: game matrix_G+H_dom}.\\

\begin{figure}[h]
\begin{center}
$
\begin{matrix}
\\
$$H(1,\cdot)+\ul{d}$$\\
\phantom{stuff}\\
$$\emptyset$$\\
\end{matrix}
\begin{matrix}
\begin{matrix}
$$\quad H(\cdot,1)+\ul{d}\quad\quad$$&$$\quad\emptyset \quad\quad $$\\
\end{matrix}\\
     \left\llbracket
     \begin{matrix}
$$\quad\quad\ul{b+d}\quad\quad\quad$$&$$\ul{a+d}\quad\quad$$\\
&\\
$$$\quad\quad\ul{c+d}$\quad\quad\quad$$&$$H+\ul{d}\quad\quad$$\\
\end{matrix}
\right\rrbracket
\end{matrix}$
\caption{The simplified matrix $S^+(H+\ul{d})$, where $d>0$.}\label{fig: game matrix_G+H_dom}
\end{center}
\end{figure}

Finally, note that if, in $H+\ul{d}+X$, both players play in a simple hot game in $X$ then the $(\emptyset,\emptyset)$ entry is $H+\ul{d} = \{\ul{a+d}\mid\ul{b+d}\mid\ul{c+d}\}$, by induction.
Therefore, eliminating the second row in $S^+(H+\ul{d})$ gives $S^+(\{\ul{a}+\ul{d}\mid\ul{b}+\ul{d}\mid\ul{c}+\ul{d}\})$. This proves that,
for $d>0$, $H +\ul{d} = \{\ul{a+d}\mid \ul{b+d}\mid \ul{c+d}\}$.
\\

Now suppose $d<0$. See Figure~\ref{fig: G+H_d negative} for the game matrix. We will show that the second column is dominated.
\begin{figure}[ht]
\begin{center}
$
\begin{matrix}
\\
$$H(1,\cdot)+\ul{d}$$\\
\phantom{stuff}\\
$$\emptyset$$\\
\end{matrix}
\begin{matrix}
\begin{matrix}
$$\quad H(\cdot,1)+\ul{d}\quad\quad$$&$$ H+\ul{d+1} \quad\quad $$&$$\quad\emptyset \quad\quad $$\\
\end{matrix}\\
     \left\llbracket
     \begin{matrix}
$$\quad\quad\ul{b+d}\quad\quad\quad$$&$$\ul{a+d+1}\quad\quad$$&$$\ul{a+d}\quad\quad$$\\
&\\
$$$\quad\quad\ul{c+d}$\quad\quad\quad$$&$$H+\ul{d+1}\quad\quad$$&$$H+\ul{d}\quad\quad$$\\
\end{matrix}
\right\rrbracket
\end{matrix}$
\end{center}
\caption{The matrix $S^+(H+\ul{d})$, where $d<0$.}\label{fig: G+H_d negative}
\end{figure}

By assumption, we have $a\geq b$ and since $d<d+1$, then $\ul{b+d}\prec \ul{a+d+1}$. We need to prove that
$\ocr(\ul{c+d} +X)\leq \ocr(H+\ul{d+1}+X)$.

If $\ocr(H+\ul{d+1}+X)=\LL$ then there is nothing to prove. We may then assume that Right has a winning (drawing) response to any Left move in 
$H+\ul{d+1}$.

Let $X\in \SCR_{SH}$.
In $\ul{c+d}+X$, Left has two types of moves.

(1) Left's move is $\ul{c-1+d}+X$. In $H(1,\cdot)+\ul{d+1}+X$ Right has three possible responses but, by induction, Right doesn't play in $\ul{d+1}$ since he has a move in a simple hot game, which is at least as good. 
If a best move is to $\ul{b+d+1}+X$, then, since $b>c-1$ and $d+1>d$, it follows that $\ocr(\ul{c-1+d}+X)\leq \ocr(\ul{b+d+1}+X)$.  
If Right's best response is in $X$, giving $\ul{a+d+1+p}+X'$, for some integer $p$, then playing the same move in $\ul{c-1+d}+X$ yields 
$\ul{c-1+d+p}+X'$. Since $a>c-1$ and $d+1>d$ then $\ocr(\ul{a+d+1+p}+X')\geq \ocr(\ul{c-1+d+p}+X')$. In all cases, Right wins (at least draws).

(2) Left's move is $\ul{c+d}+X(i,\cdot)$. In $H+\ul{d+1}+X(i,\cdot)$, Right has three possible responses but, by induction, Right doesn't play in $\ul{d+1}$ since he has a move in a simple hot game at least as good. Suppose a best move is in $H$, giving the option 
$\ul{c+d+1+p}+X'$, for some integer $p$. In $\ul{c+d}+X(i,\cdot)$, Right plays in $\ul{d}$ giving $\ul{c+d+1+p}+X'$ and the two games are the same.
Suppose a best move is in $X(i,\cdot)$. This gives $H+\ul{d+1+q}+X'$, for some integer $q$, regardless of being in the same or a different hot game.
 Right plays the same move in $\ul{c+d}+X(i,\cdot)$ giving $\ul{c+d+q}+X'$. Now $\ocr(H+\ul{d+1+q}+X')\geq \ocr(\ul{c+d+q}+X')$ by induction.
 In all cases, Right wins (at least draws).

Therefore, the second column is dominated and can be eliminated.

Again, note that if, in $H+\ul{d}+X$, both players play in a simple hot game in $X$ then the $(\emptyset,\emptyset)$ entry is $H+\ul{d} = \{\ul{a+d}\mid\ul{b+d}\mid\ul{c+d}\}$, by induction.

Thus $S^+(H +\ul{d}) = S^+(\{\ul{a+d}\mid \ul{b+d}\mid \ul{c+d}\})$.
\end{proof}
In the proof of  Theorem \ref{lem:integer translation},  the moves in $\ul{d}$ are shown to be dominated. This proves 
the next result.
\begin{corollary}\label{cor:Cor_1}
Let $H=\{\ul{a}\mid \ul{b} \mid \ul{c}\}$ be a simple hot game and $d$ is an integer. 
In $H+\ul{d}$, both players' optimal move is in $H$.
\end{corollary}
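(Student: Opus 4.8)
The plan is to read the conclusion straight off the domination arguments already carried out in the proof of Theorem~\ref{lem:integer translation}, since the phrase ``optimal move lies in $H$'' is exactly the statement that the move played in $\ul{d}$ is dominated and hence may be discarded under best play (Theorems~\ref{thm:dom2} and~\ref{thm:Rightdom2}). I would split into the three cases $d=0$, $d>0$, and $d<0$, mirroring the case split of the theorem.

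For $d=0$ there is nothing to prove: $\ul{0}$ has no moves for either player, so every move in $H+\ul{0}$ is already forced to be in $H$. For $d>0$, I would first note that a positive integer $\ul{d}$ has no Right move, so Right's only available move in $H+\ul{d}$ is in $H$ automatically. For Left, I would point to Figure~\ref{fig: game matrix_G+H_d positive}: the row $H+\ul{d-1}$ is precisely Left's move inside $\ul{d}$, and the theorem's proof establishes that this row is dominated by the row $H(1,\cdot)+\ul{d}$, which is Left's move inside $H$. By Theorem~\ref{thm:dom2}, Left may delete the dominated move, so Left's optimal move is in $H$.

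The case $d<0$ is symmetric. A negative integer $\ul{d}$ has no Left move, so Left's only move is forced into $H$. For Right, Figure~\ref{fig: G+H_d negative} shows that the column $H+\ul{d+1}$ is Right's move inside $\ul{d}$, and the proof of the theorem shows this column is dominated by the column $H(\cdot,1)+\ul{d}$, namely Right's move in $H$. By Theorem~\ref{thm:Rightdom2}, Right may eliminate the dominated move, so Right's optimal move is also in $H$. Combining the three cases gives the claim for both players.

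I expect no genuine obstacle, since the corollary carries no content beyond the two domination computations already performed inside Theorem~\ref{lem:integer translation}; this is why it is phrased as an immediate consequence. The only point requiring care is the bookkeeping that identifies the dominated line in each matrix with ``the move played in the integer $\ul{d}$'': one must check that the $H+\ul{d-1}$ row really is Left decrementing the integer and that the $H+\ul{d+1}$ column really is Right incrementing it, rather than some move inside $H$. Once that identification is confirmed, the corollary follows at once.
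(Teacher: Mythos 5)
Your proposal is correct and matches the paper's own argument, which likewise derives the corollary directly from the domination of the $H+\ul{d-1}$ row (for $d>0$) and the $H+\ul{d+1}$ column (for $d<0$) established inside the proof of Theorem~\ref{lem:integer translation}. Your extra care in identifying those lines with the moves played in $\ul{d}$, and in noting that the opposite player has no move in $\ul{d}$ at all, is exactly the bookkeeping the paper leaves implicit.
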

Lemma \ref{lem:addition_of_integers} and Theorem \ref{lem:integer translation} allows a \textit{normalization} of simple hot games: the same-round move can be taken to be $\ul{0}$ at the cost of adding an integer to the disjunctive sum.
\begin{corollary}\label{cor:Cor_2}
For $i=1\ldots n$, let $G_i=\{\ul{a_i}\mid \ul{b_i} \mid \ul{c_i}\}$,  and 
$G'_i=\{\ul{a_i-b_i}\mid \ul{0} \mid \ul{c_i-b_i}\}$ be simple hot games. If $H=G_1+G_2+\ldots +G_n$, 
then 
\[H=\left(\ul{\sum{_{i=1}^nb_i}}\right)+ G'_1+G'_2+\ldots +G'_n.
\]
\end{corollary}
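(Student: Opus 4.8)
The plan is to derive the corollary directly from Theorem~\ref{lem:integer translation} (integer translation) and Lemma~\ref{lem:addition_of_integers} (integers add), using only the commutativity and associativity of the disjunctive sum. Before invoking integer translation I would first check that each normalized summand $G'_i = \{\ul{a_i-b_i}\mid\ul{0}\mid\ul{c_i-b_i}\}$ is genuinely a simple hot game: since $G_i$ is simple hot we have $a_i \geq b_i \geq c_i$, and subtracting $b_i$ gives $a_i - b_i \geq 0 \geq c_i - b_i$, so the three integer entries are in the required weakly decreasing order.

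The central step is to recover each $G_i$ from its normalized form by translating back. Applying Theorem~\ref{lem:integer translation} to the simple hot game $G'_i$ with the integer $d = b_i$ gives
\[
G'_i + \ul{b_i} = \{\ul{(a_i-b_i)+b_i}\mid\ul{0+b_i}\mid\ul{(c_i-b_i)+b_i}\} = \{\ul{a_i}\mid\ul{b_i}\mid\ul{c_i}\} = G_i .
\]
This identity is valid regardless of the sign of $b_i$, since Theorem~\ref{lem:integer translation} treats $d>0$, $d<0$, and $d=0$ separately.

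With the identity $G_i = G'_i + \ul{b_i}$ in hand, I would substitute into $H$ and regroup the summands. Using commutativity and associativity (Theorem~\ref{thm: properties}),
\[
H = \sum_{i=1}^n G_i = \sum_{i=1}^n \left(G'_i + \ul{b_i}\right) = \left(\sum_{i=1}^n G'_i\right) + \left(\sum_{i=1}^n \ul{b_i}\right).
\]
The integer part then collapses: by Lemma~\ref{lem:addition_of_integers}, applied inductively on $n$, $\sum_{i=1}^n \ul{b_i} \equiv \ul{\sum_{i=1}^n b_i}$. Finally, since equality respects addition (apply Theorem~\ref{thm:order preserving} in both directions to see that $=$ is a congruence for $+$), I may replace $\sum_{i=1}^n \ul{b_i}$ by $\ul{\sum_{i=1}^n b_i}$ inside the disjunctive sum, obtaining $H = \ul{\sum_{i=1}^n b_i} + G'_1 + \cdots + G'_n$, as required.

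I do not expect a genuine obstacle, as the statement is a bookkeeping consequence of the two preceding results; the only points needing care are verifying the simple-hot ordering of each $G'_i$ so that Theorem~\ref{lem:integer translation} may be applied, and justifying that the integer collapse $\sum \ul{b_i} \equiv \ul{\sum b_i}$ can be performed inside the larger sum, which is exactly what the congruence property of $+$ from Theorem~\ref{thm:order preserving} provides. Recall also that throughout this section $\equiv$ and $=$ are taken over $\SCR_{SH}$, so no mismatch between the two relations arises.
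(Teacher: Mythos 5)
Your proof is correct and follows exactly the route the paper intends: the corollary is stated there without proof as an immediate consequence of Theorem~\ref{lem:integer translation} and Lemma~\ref{lem:addition_of_integers}, and your writeup (translating each summand by $\ul{b_i}$, regrouping via Theorem~\ref{thm: properties}, collapsing the integers, and invoking the congruence property of $+$) is precisely the intended derivation. Your extra checks --- that each $G'_i$ satisfies the simple-hot ordering and that substitution inside a sum is justified by Theorem~\ref{thm:order preserving} --- are worthwhile details the paper leaves implicit.
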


Corollaries \ref{cor:Cor_1} and \ref{cor:Cor_2} show that to find the optimal strategies for a disjunctive sum in 
$\SCR_{SH}$ we need only focus on the simple hot games where the same round move is to $\ul{0}$.

Another consequence of these Corollaries, is that the game is essentially over when the position has been reduced to an integer: if the integer is positive then Left wins, if $0$, then it is a draw, and, if it is negative, then Right wins. Once the games in the disjunctive sum have been normalized, the starting integer can be regarded as arbitrary. Consequently, Left's and Right's strategies must maximize, respectively minimize, the sum of the integers obtained when playing the simple hot games. If Right responds in the same game, we say that Right is playing \textit{locally}. For Right, the \textit{local strategy} is to respond in the same simple hot game as Left for all games in the disjunctive sum.

The local strategy is not always optimal. For example, let $H=\{\ul{6}\mid \ul{0} \mid \ul{-55}\}+\{\ul{10}\mid \ul{0} \mid \ul{-36}\}$. %
If Right plays locally, the result is $\ul{0}$. If Right responds in the other summand then the result is either $\ul{6-36}=-\ul{30}$, or $\ul{10-55}=\ul{-45}$, depending on which summand Left plays.

 The optimal strategies must: for Left, give the order in which she should play the simple hot games;
for Right, decide in which simple hot games he should respond. 

First, an observation that helps clarify Left's ordering of the summands in the disjunctive sum.

\begin{observation}\label{obs:index} Let $G_1=\{\ul{a_1}\mid \ul{0} \mid \ul{c_1}\}$ and $G_2=\{\ul{a_2}\mid \ul{0} \mid \ul{c_2}\}$ be simple hot games and suppose $a_1-c_1>a_2-c_2$. Consider $H=G_1+G_2$.
Suppose Right does not play locally.   
Left playing in $G_1$ results in $\ul{a_1+c_2}$, and playing in $G_2$ results in 
$\ul{a_2+c_1}$. However, reordering $a_1-c_1>a_2-c_2$ gives $a_1+c_2>a_2+c_1$.
Left maximizes the integer by playing $G_1$. 
\end{observation}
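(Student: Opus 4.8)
The plan is to treat the two possible non-local lines of play separately and compare the resulting integers directly. First I would pin down the hypothesis: since $H=G_1+G_2$ has only two summands, ``Right does not play locally'' means Right responds in whichever summand Left did not choose, so there are exactly two cases to check. In each simple hot game $G_i=\{\ul{a_i}\mid \ul{0}\mid \ul{c_i}\}$, a lone Left move sends $G_i$ to its Left option $\ul{a_i}$ and a lone Right move sends $G_i$ to its Right option $\ul{c_i}$; the same-round option $\ul{0}$ is reached only when both players play in the \emph{same} summand, which is precisely the local response being excluded here.

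Next I would compute the two outcomes. If Left plays in $G_1$, then in that round $G_1$ becomes $\ul{a_1}$ while Right's non-local reply turns $G_2$ into $\ul{c_2}$, so the position becomes $\ul{a_1}+\ul{c_2}$; by Lemma~\ref{lem:addition_of_integers} this is equivalent to $\ul{a_1+c_2}$. Symmetrically, if Left plays in $G_2$ the position becomes $\ul{a_2}+\ul{c_1}\equiv \ul{a_2+c_1}$.

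Finally, I would compare these two integers. Rearranging the hypothesis $a_1-c_1>a_2-c_2$ gives $a_1+c_2>a_2+c_1$, and by Theorem~\ref{thm:int_ineq}(2) a strictly larger integer is strictly greater in the partial order, so $\ul{a_1+c_2}\succ \ul{a_2+c_1}$. Since, as established in the discussion preceding the observation, Left's objective once the games are normalized is to maximize the resulting integer, Left prefers to play in $G_1$. There is no genuine obstacle here: the argument is a short direct computation. The only point requiring a little care is the justification that after a single non-local round the position is honestly a sum of two integers rather than something needing further analysis, and this is exactly where Lemma~\ref{lem:addition_of_integers} is invoked; the contrasting local response, which would instead leave the same-round option $\ul{0}$ in one summand, is handled in the surrounding strategy results rather than in this observation.
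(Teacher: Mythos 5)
Your proposal is correct and follows essentially the same route as the paper: identify the two non-local lines of play, compute that they yield $\ul{a_1}+\ul{c_2}\equiv\ul{a_1+c_2}$ and $\ul{a_2}+\ul{c_1}\equiv\ul{a_2+c_1}$ respectively, and rearrange the hypothesis $a_1-c_1>a_2-c_2$ into $a_1+c_2>a_2+c_1$. Your explicit appeals to Lemma~\ref{lem:addition_of_integers} and Theorem~\ref{thm:int_ineq} merely make precise the steps the paper leaves implicit in the observation's own text.
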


This observation motivates the next definitions. 

 Let $G_i=\{\ul{a_i}\mid \ul{0} \mid \ul{c_i}\}$, $i=1\ldots n$, be simple hot games and let $G=G_{1}+G_2+\ldots+G_n$. The \textit{standard indexing} of $G$ has $\ul{a_i}-\ul{c_i}\geq \ul{a_{i+1}}-\ul{c_{i+1}}$, $i=1,\ldots,n-1$. If $\ul{a_i}-\ul{c_i}=\ul{a_{i+1}}-\ul{c_{i+1}}$ then the ordering is arbitrary.
The \textit{index-order} strategy for Left is to always play the simple hot game of least index.

 Right may be able to do better than the local strategy and we set up an auxiliary graph to describe the optimization problem that Right needs to solve. 

  Let $D_G$ be a  graph, with loops and weighted edges,  and $V(D_G) = \{1,2,\ldots,n\}$. 
Edge $ij$, $i<j$, exists if
$\ul{a_i}+\ul{c_j}<0$  and $w(ij) = -\ul{a_i}-\ul{c_j}$ is the weight on $ij$. The loops have weight $0$. The \textit{standard optimization problem of $G$} is to find the minimum weight matching on $D_G$. 
Let $M$ be a matching of $D_G$. Let $M'$ be the edges of $M$ together with the loops at vertices not incident to an edge of $M$.
The \textit{matching strategy} from $M$, denoted $\alpha(M)$, is: 
\begin{itemize}
\item Suppose $ij\in M$. If Left plays in $G_i$, then Right plays in $G_j$;
\item If $i$ is not incident with an edge of $M$, then when Left plays in $G_i$, Right responds in $G_i$.
\end{itemize}

Let $G$ be a disjunctive sum of normalized simple hot games. The local strategy corresponds to $M$ being empty. Suppose $ij$, $i<j$, is an edge in $D_G$. If Right plays
  locally in $G_i$ and $G_j$, this produces the integer $0$. If Left plays in $G_i$ and Right responds in $G_j$ the resulting integer is $\ul{a_i}+\ul{c_j}$. Observation \ref{obs:index}  indicates that Left would not want to play in $G_j$ before $G_i$. The weight, $w(ij)=\ul{a_i}+\ul{c_j}$, in $D_G$, is the change that results from Right not playing locally but responding in the other component of $G_i$ and $G_j$.  Consequently, Right will only want to consider negatively weighted edges.
  If $M$ is a matching on $D_G$, then the weights of the edges in $M$ give the change over the local strategy when Right plays the matching strategy from $M$.

\begin{theorem}\label{thm:strategy2} For $i=1,2,\ldots,n$, let $G_i=\{\ul{a_i}\mid \ul{0} \mid \ul{c_i}\}$ be simple hot games with the standard indexing. 
 Let $G=G_1+G_2+\ldots+ G_{n}$. The index-order strategy for Left is an optimal strategy for  playing $G$. For Right, the matching strategy from a minimum weighted matching in $D_G$ is an optimal strategy.
\end{theorem}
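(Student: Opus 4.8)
The plan is to read the theorem as a saddle-point (minimax) statement about the final banked integer and to prove two matching inequalities. By Corollary~\ref{cor:Cor_2} I may assume every summand is a normalized simple hot game $G_i=\{\ul{a_i}\mid\ul{0}\mid\ul{c_i}\}$ with $\ul{a_i}\succeq\ul{0}\succeq\ul{c_i}$, and by Corollary~\ref{cor:Cor_1} (reinforced by the remark that a move inside an integer strictly worsens the banked total for the player who makes it) neither player profits from playing in an integer while a hot game remains. Hence a run of $G$ consists of rounds, each of which either resolves a single $G_k$ to $\ul{0}$ (Right answers locally) or resolves two summands $G_p,G_q$ to $\ul{a_p}$ and $\ul{c_q}$ (Left plays $G_p$, Right answers in $G_q$); once every hot game is an integer the outcome is read off the sign of the total, so Left is maximizing and Right minimizing the final integer $F$. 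The first thing I would record is the resulting structural dictionary: a completed run is described by the set of unordered pairs $\{p,q\}$ resolved in a common round, which forms a matching $M$ on $\{1,\dots,n\}$, and $F=\sum_{\{p,q\}\in M}\left(\ul{a_{p'}}+\ul{c_{q'}}\right)$, where in each pair $p'$ is the index Left played and $q'$ the one Right answered in, and unmatched indices contribute $\ul{0}$.

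Next I would isolate the single inequality that drives everything, namely the exchange inequality behind Observation~\ref{obs:index}: if $i<j$ in the standard indexing, so $\ul{a_i}-\ul{c_i}\succeq\ul{a_j}-\ul{c_j}$, then $\ul{a_i}+\ul{c_j}\succeq\ul{a_j}+\ul{c_i}$. Through the dictionary this says that in any pair $\{i,j\}$ with $i<j$ Left prefers to be the player who moved in the smaller index. Let $v^{*}=\sum_{ij\in M^{*}}\left(\ul{a_i}+\ul{c_j}\right)$ be the value of a minimum weight matching $M^{*}$ of $D_G$; recall an edge $ij$ is present exactly when $\ul{a_i}+\ul{c_j}\prec 0$, so including it can only lower $F$, and the empty matching gives $\ul{0}$. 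I would then prove Left's half: if Left follows the index-order strategy then, each time she moves, every still-available index exceeds the one she plays, so each pair that forms has Left on its smaller index and contributes exactly $\ul{a_i}+\ul{c_j}$. Thus the run realizes some matching $M$ with value $\sum_{ij\in M}\left(\ul{a_i}+\ul{c_j}\right)\succeq v^{*}$, because $v^{*}$ is the minimum such value. Hence index-order guarantees Left $F\succeq v^{*}$ against every Right response.

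For Right's half I would let Right play the matching strategy $\alpha(M^{*})$, extended symmetrically so that whenever Left moves in either endpoint of an edge of $M^{*}$ Right answers in the other endpoint (and answers locally on unmatched indices). Because the two endpoints of a matched edge are resolved in the same round, each edge $\{i,j\}\in M^{*}$ is played exactly once, and by the exchange inequality its contribution is at most $\ul{a_i}+\ul{c_j}$ regardless of which endpoint Left chose; unmatched indices contribute $\ul{0}$. Summing gives $F\preceq v^{*}$ for every Left strategy. Combining the two halves, Left can force $F\succeq v^{*}$ and Right can force $F\preceq v^{*}$; since any value Left can guarantee is at most any value Right can guarantee, the common game value is $v^{*}$ and both prescribed strategies are optimal.

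The main obstacle is Right's half. The matching strategy as stated prescribes a response only when Left plays the smaller endpoint of an edge, so I must check that the symmetric extension is well defined, i.e.\ that the partner endpoint is always still unresolved when Left reaches one of them, and that out-of-order play by Left cannot create a pair whose contribution exceeds its edge value. This is precisely where Observation~\ref{obs:index} is invoked, and where one must also rule out Left gaining either by touching integers or by forcing Right off an edge, both excluded by Corollary~\ref{cor:Cor_1} and the pairing structure. I expect the cleanest organization of both halves to be an induction on $n$ (equivalently on the birthday of $G$), peeling off the first round and using Lemma~\ref{lem:addition_of_integers} and Theorem~\ref{lem:integer translation} to identify the residual position as a smaller disjunctive sum of normalized simple hot games together with a banked integer, to which the induction hypothesis applies.
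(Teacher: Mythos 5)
Your proof is correct and rests on the same underlying reduction as the paper's — the round-by-round accounting that turns a play of $G$ into a matching on $\{1,\dots,n\}$, plus the exchange inequality $\ul{a_i}+\ul{c_j}\succeq \ul{a_j}+\ul{c_i}$ for $i<j$ from Observation~\ref{obs:index} — but you execute it as a genuine two-sided minimax argument, which the paper does not. The paper's proof simply asserts that against index-order play Right's optimal strategy has the form $\alpha(M)$ for some matching $M$, and that if Left deviates Right can still play $\alpha(M)$ and do no worse; it then computes the value of $\alpha(M)$ and invokes minimality. You instead prove the two guarantees separately: index-order forces every non-local pair to have Left on its smaller index, so the realized value is at least $v^{*}$ against any Right reply; and the symmetrically extended $\alpha(M^{*})$ forces each matched edge to contribute at most its edge value against any Left order, so Right secures at most $v^{*}$. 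This buys you exactly the steps the paper leaves implicit — in particular the well-definedness of Right's response when Left plays the larger endpoint of a matched edge first, which is where the paper's ``Right can still play $\alpha(M)$'' claim actually needs an argument. Two small points you should make explicit when writing this up. First, the matching realized by an index-order run may contain pairs that are not edges of $D_G$ (those with $\ul{a_i}+\ul{c_j}\succeq \ul{0}$); your inequality $\sum_{ij\in M}(\ul{a_i}+\ul{c_j})\succeq v^{*}$ is still right, but only after discarding these non-negative contributions and observing that what remains is a matching in $D_G$. Second, ruling out integer moves in a sum with \emph{several} hot summands is not literally Corollary~\ref{cor:Cor_1} (which treats $H+\ul{d}$ for a single $H$); you need the routine domination argument extending it, and you must say what $\alpha(M^{*})$ prescribes for Right if Left does move in an integer. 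Neither issue is a flaw in the plan — the paper glosses both — but your more careful framing obliges you to close them.
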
 

\begin{proof} 
Suppose Left plays the index-order strategy. There is a matching $M$, such that Right playing $\alpha(M)$ is an optimal strategy. If Left plays the games in any other order, Right can still play $\alpha(M)$ and get to the same integer or better. We may assume then that Left plays the index-order strategy.\\

Since Left is playing the index-order strategy,  Right's optimal strategy will be $\alpha(M)$ for some matching $M$ on $D_G$. The resulting integer from playing
$\alpha(M)$ is 
\begin{eqnarray*}
\sum_{ij\in M}(\ul{a_i}+\ul{c_j}) &=& \sum_{ij\in M}w(ij).
\end{eqnarray*}
Since $\sum_{ij\in M}w(ij)$ is the minimum weight of any matching in $D_G$, then $\sum_{ij\in M}w(ij)$ is the least integer that Right can achieve.

\end{proof}

\subsection{Simple hot game example: \textsc{Cheating Robot Toppling Dominoes}}\label{subsec:toppling dominoes}

The rules  for \textsc{cheating robot toppling dominoes} are based on the physics of a line of dominoes. For the underlying combinatorial game, see \cite{FinkNSW2015}. Since playing with general positions is beyond our skill, here, we only consider rows composed of a string of black dominoes followed by a string of white dominoes, which we call TD${}_2$. Dominoes are spaced so that a toppling domino can only topple an adjacent domino. If a row has an empty place then the sub-rows to the left and to the right are regarded as separate rows.  \emph{Moves} are only applied to a single row. Left topples a black domino either to the left or the right. Similarly, Right topples a white domino either to the left or the right. The \emph{effect of the move} is as follows: Two dominoes, $A$ and $B$, and directions are chosen. Domino $A$ topples everything in its direction stopping at $B$, if $B$ is in that direction, else to the end of the string. Similarly for domino $B$. A domino can be `toppled' by the dominoes on both sides at the same time. At the end of the move, all toppled dominoes are removed. The \textit{winner} is the player with dominoes remaining when the opponent has none. 

The next examples illustrate the rules, with no pretence of these being good moves. Let `${}_{-}$' indicate an empty spot and $\overrightarrow{\text{K}}$ or $\overleftarrow{\text{K}}$  indicate the direction in which the player topples the piece `K'. 
Possible moves are
\begin{center}
\includegraphics[width=.25\textwidth]{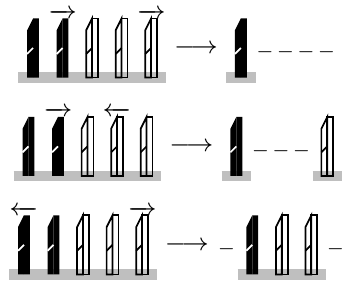}
\end{center}
For brevity, a position with $p$ black dominoes and $q$ white will be written \includegraphics[width=.07\textwidth]{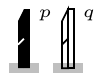} 
and $(p,q)$ in text.

To illustrate the simple hot games strategies, consider the following position $G$:
\[
\includegraphics[width=\textwidth]{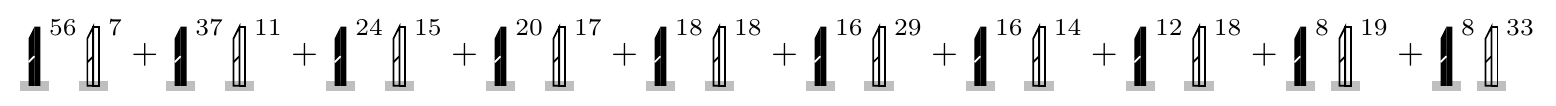}
\]

If you were Left, would you want to play? How would the robot play?\\

For a single row, it is obvious that the best moves are to topple the domino next to the opponent's dominoes and in that direction. The proof we leave to the reader. This shows that TD${}_2$ positions are simple hot games.
\begin{lemma}
The TD${}_2$ position $(p,q)$ is equivalent to $\{\ul{p-1}\mid \ul{p-q} \mid\ul{1-q}\}$ in $\SCR_{SH}$.
\end{lemma}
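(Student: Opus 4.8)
The plan is to induct on $p+q$ and to lean on the domination results of Section~\ref{sec:CR_theory}. First I would pin down the move structure of the position $(p,q)$. Using the stated fact that a player's only sensible move is to topple the domino adjacent to the opponent's block, toward that block, together with the elementary identities $(m,0)=\ul{m}$ and $(0,m)=\ul{-m}$ (each obtained by the same induction), Left's undominated move topples the rightmost black rightward and Right's topples the leftmost white leftward. Left's solo option is then $(p-1,0)=\ul{p-1}$, Right's solo option is $(0,q-1)=\ul{1-q}$, and the single surviving same-round option, when both make these moves, is $(p-1,q-1)$: the two adjacent dominoes collide and are removed, and nothing else topples. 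Theorems~\ref{thm:dom2} and~\ref{thm:Rightdom2} justify discarding every other move, so after simplification $(p,q)=\{\ul{p-1}\mid (p-1,q-1)\mid\ul{1-q}\}$.

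For the base cases $\min(p,q)=1$ the claim is immediate: when $q=1$ the same-round option is $(p-1,0)=\ul{p-1}=\ul{p-q}$, so $(p,q)=\{\ul{p-1}\mid\ul{p-1}\mid\ul 0\}$ is literally a simple hot game, and symmetrically when $p=1$. So suppose $p,q\ge 2$. By the induction hypothesis applied to the smaller position $(p-1,q-1)$, we have $(p-1,q-1)\equiv\{\ul{p-2}\mid\ul{p-q}\mid\ul{2-q}\}$, a simple hot game whose central (same-round) value is again $\ul{p-q}$.

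The crux is then a single general statement, which I would isolate as a lemma: for any simple hot game $K=\{\ul{a'}\mid\ul b\mid\ul{c'}\}$ one has $\ul b\succeq K$ in $\SCR_{SH}$. After an integer translation (Theorem~\ref{lem:integer translation}, or Corollary~\ref{cor:Cor_2}) this normalizes to $b=0$, i.e.\ $\ul 0\succeq\{\ul{a'}\mid\ul 0\mid\ul{c'}\}$ with $a'\ge 0\ge c'$, which is exactly the generalization of Lemma~\ref{lem:dicot} (the dicot case $a'=c'=0$ being $\overline{*}$). I would prove it by the mimicking technique used throughout Section~\ref{sec:CR_theory}: given $X\in\SCR_{SH}$ in which Right wins or draws $\ul 0+X=X$, Right plays $K+X$ by following his $X$-strategy on the $X$-part and, whenever Left moves in $K$, responding \emph{locally} (the same-round reply to the option $\ul 0$), which caps $K$'s contribution at $0$ and returns the position to a line $\ul 0+X'$. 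The cross-terms, where Left plays in $X$ while Right replies solo in $K$ to $\ul{c'}$, only help Right because $\ul{c'}\preceq\ul 0$; this is the same observation underlying Corollary~\ref{cor:Cor_1}. An induction on the birthday closes the argument.

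Granting the crux, $\ul{p-q}\succeq(p-1,q-1)$, since the induction hypothesis identifies $(p-1,q-1)$ with a simple hot game of central value $\ul{p-q}$ and $\succeq$ descends to the equivalence classes. Applying Theorem~\ref{thm:dom1} to $\{\ul{p-1}\mid\ul{p-q}\mid\ul{1-q}\}$, replacing its same-round option $\ul{p-q}$ by the dominated $(p-1,q-1)$, yields $\{\ul{p-1}\mid\ul{p-q}\mid\ul{1-q}\}\equiv\{\ul{p-1}\mid(p-1,q-1)\mid\ul{1-q}\}=(p,q)$, completing the induction. The hard part is the crux lemma: one must argue that Right's ability to cap a hot component locally at its central value means the extra `temperature' of the genuine hot same-round option $(p-1,q-1)$ can never be exploited by any $X\in\SCR_{SH}$, so that it is interchangeable with the flat integer $\ul{p-q}$.
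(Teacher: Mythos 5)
The paper offers no proof of this lemma at all (``The proof we leave to the reader''), so there is nothing to compare your argument against; judged on its own terms, your proposal has a genuine gap at its final step. Your reduction of $(p,q)$ to $\{\ul{p-1}\mid (p-1,q-1)\mid \ul{1-q}\}$ is the right starting point, and your crux inequality $\ul{b}\succeq\{\ul{a'}\mid\ul{b}\mid\ul{c'}\}$ is correct (it is the Lemma~\ref{lem: G-G}/Lemma~\ref{lem:dicot} mimicking argument: Right can always cap a component at its same-round value by responding locally). The problem is how you use it. Theorem~\ref{thm:dom1} replaces the entry $G^{i,1}$ by another entry $G^{i,2}$ \emph{already present in the same row}; its proof works because Right controls the column choice and would simply never select the dominated cell. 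Here the position $\{\ul{p-1}\mid\ul{p-q}\mid\ul{1-q}\}$ has a single column, $(p-1,q-1)$ is not one of its options, and the theorem gives you no license to overwrite the unique same-round option with an arbitrary dominated game.

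What one-sided domination actually buys you is only $(p,q)\preceq\{\ul{p-1}\mid\ul{p-q}\mid\ul{1-q}\}$: in the line where Left plays the component and Right's winning reply is local, you can pass from ``Right wins $\ul{p-q}+X$'' to ``Right wins $(p-1,q-1)+X$'' using $\ul{p-q}\succeq(p-1,q-1)$. The reverse direction of the equivalence needs the opposite implication, i.e.\ effectively $(p-1,q-1)\succeq\ul{p-q}$, and that is false in general: by induction $(p-1,q-1)$ is a genuinely hot simple hot game, and the paper's own example $\{\ul{6}\mid\ul{0}\mid\ul{-55}\}+\{\ul{10}\mid\ul{0}\mid\ul{-36}\}$ (a Right win, whereas $\ul{0}+\ul{0}$ is a draw) shows that a normalized simple hot game with $a>0>c$ is \emph{not} equivalent to its central integer in $\SCR_{SH}$. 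So the residual component $(p-1,q-1)$ left over after a local exchange carries real threats ($\ul{p-2}$ for Left, $\ul{2-q}$ for Right) that a dead integer $\ul{p-q}$ does not, and the heart of the lemma is precisely to show these weaker, ``sandwiched'' threats can never be profitably activated against any $X\in\SCR_{SH}$ --- for instance by an argument through the matching machinery of Theorem~\ref{thm:strategy2}, or a direct strategy transfer exploiting $\ul{p-1}\succeq\ul{p-2}$ and $\ul{2-q}\succeq\ul{1-q}$. Your write-up asserts this interchangeability in its last sentence but does not prove it, and the tools you cite cannot deliver it.
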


 Now we can use Theorem \ref{thm:strategy2}. We let $G = (p_1,q_1)+(p_2,q_2)+\ldots +(p_{10},q_{10})$
where they are listed in the index-order strategy for Left. The local-strategy yields an overall score of $34$, which is in Left's favour. The problem is to determine Right's best response.

The normalized form of $\{\ul{p-1}\mid \ul{p-q} \mid\ul{1-q}\}$ is $\ul{p-q} + \{\ul{q-1}\mid \ul{0} \mid\ul{1-p}\}$. In $G$, the sum of the integers is equal to the local strategy (score of $34$).  In the auxiliary graph $D_G$, there is an edge between $(p_i,q_i)$ and $(p_j,q_j)$, $i<j$, if $q_i-1+1-p_j<0$. That is, if $q_i<p_j$. 
These edges are given in
 Figure \ref{fig: Right's matching options} on which Right has to find the optimal weighted matching. It is easy, albeit tedious, using the original or normalized versions of $G$, to check that Right will choose to match only  $(56,7)$ with $(37,11)$,  and $(24,15)$ with $(20,17)$. This yields  an overall score of $-1$, that is, Right wins.

\begin{figure}
\begin{minipage}[t]{0.5\textwidth}
\includegraphics{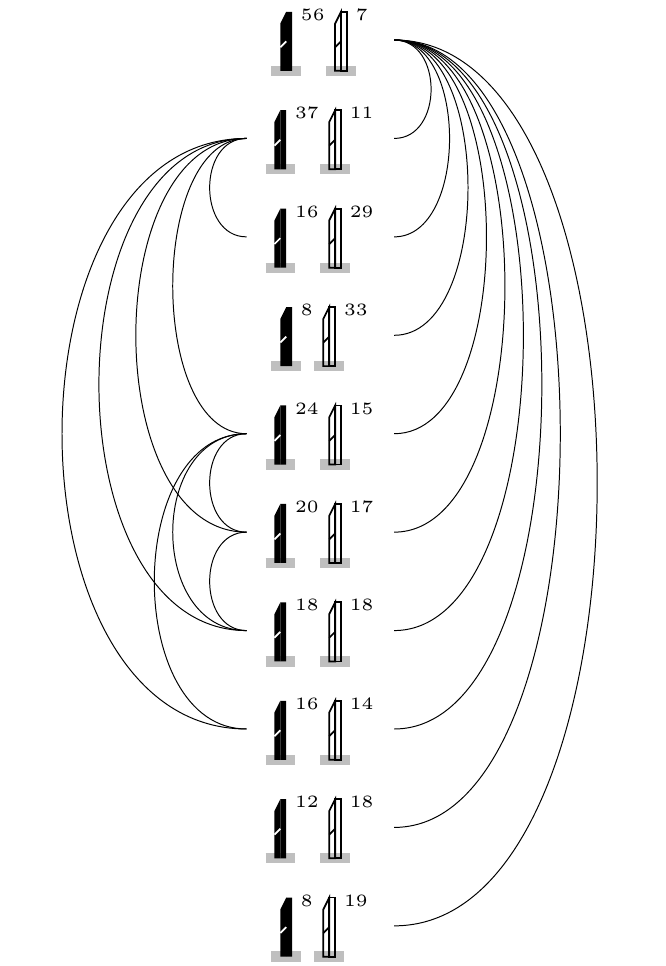}
\end{minipage}
\begin{minipage}[t]{0.5\textwidth}
\includegraphics{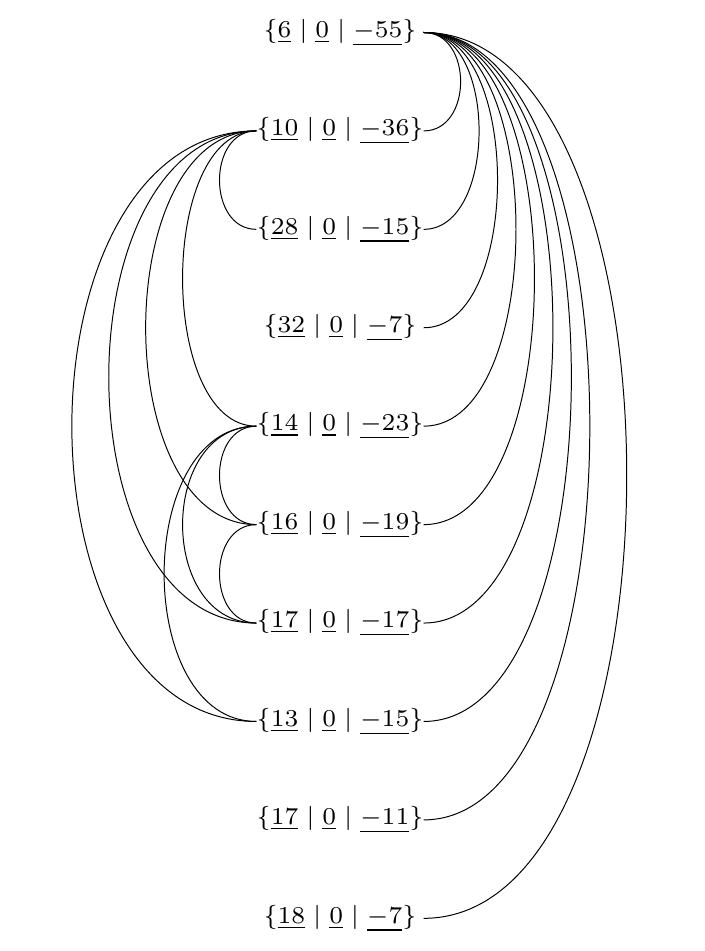}
\end{minipage}
\caption{Options for Right via matchings which are better than the local-strategy; the original summands (left) and the normalized game (right), not including the translated integers.}\label{fig: Right's matching options}

\end{figure}

\begin{question}
The abstract form of the TD${}_2$ position $(p,q)$ is $\{\ul{p-1}\mid \ul{p-q}\mid \ul{1-q}\}$, and the same-round move is directly dependent on $p$ and $q$. Can this special structure 
be used to find a constant time algorithm to determine Right's best response to Left's move in TD${}_2$?
\end{question}

%%%%%%%%%%%%%%%%%%%%%%%%%%%%%%%%
%%%%%%%%%%%%%%%%%%%%%%%%%%%%%%%%
%%%%%%%%%%%%%%%%%%%%%%%%%%%%%%%%
%%%%%%%%%%%%%%%%%%%%%%%%%%%%%%%%
%%%%%%%%%%%%%%%%%%%%%%%%%%%%%%%%
%%%%%%%%%%%%%%%%%%%%%%%%%%%%%%%%
%%%%%%%%%%%%%%%%%%%%%%%%%%%%%%%%
%%%%%%%%%%%%%%%%%%%%%%%%%%%%%%%%
%%%%%%%%%%%%%%%%%%%%%%%%%%%%%%%%
%%%%%%%%%%%%%%%%%%%%%%%%%%%%%%%%

\section{Discussion}
The class $\SCR$ is reminiscent of the class of short mis\`ere games in that (i) the equivalence class of $0$ has one element, and (ii) there are no inverses. However, we showed that there is a sub-class with more algebraic structure. 

As in the other models of combinatorial games, one important question is to find, if possible, a test for $G\succ H$ that involves only 
the followers of $G$ and $H$. If the absolute theory \cite{LNS2021} applied, this would be automatic (see \cite{Siegel2015} and \cite{LNS2016}). However, the absolute theory involves alternating play. We have yet to find a way to adapt this theory to the Cheating Robot context. 

Are there any reductions in addition to those of Theorems \ref{thm:dom2}, \ref{thm:Rightdom2}, and \ref{thm:dom1}? In normal play, there are only two reductions. Repeatedly applying them, in any order, results in a unique game, the canonical form. In fact, the canonical form is the unique game of the least birthday equal to the original. The canonical form of a game $G$, can replace $G$ in any disjunctive sum, thereby reducing the time required to analyze the sum. In the Cheating Robot context, it is not obvious that there is one game of least birthday equal to the original game. In scoring games \cite{LNS2016}, there are three reductions to obtain an equal game of least birthday, however, a substitution result is needed to give a well-defined game that can be called the canonical form.  

Another direction is to find sub-classes in which some or all games have inverses and, in these, find reductions that result in a canonical form. In additive combinatorial game theory, games that have positive or non-negative incentives have the richest algebraic structure, see Milnor \cite{Milno1953} and \cite{AlberNW2019,BerleCG2001,Siege2013} respectively. Following on from Section \ref{sec: simple hot games}, is there a rich theory in the corresponding same-round hot games, that is, where  for every $i$ and $j$,
$G^{i,\cdot} \succeq G^{i,j}\succeq G^{\cdot,j}$?

\section*{Acknowledgements}
We would like to thank Silvia Heubach for her helpful suggestions. 

%%%%%%%%%%%%%%%%%%%%%%%%%%%%%%%%
%%%%%%%%%%%%%%%%%%%%%%%%%%%%%%%%
%%%%%%%%%%%%%%%%%%%%%%%%%%%%%%%%
%%%%%%%%%%%%%%%%%%%%%%%%%%%%%%%%
%%%%%%%%%%%%%%%%%%%%%%%%%%%%%%%%
%%%%%%%%%%%%%%%%%%%%%%%%%%%%%%%%
%%%%%%%%%%%%%%%%%%%%%%%%%%%%%%%%
%%%%%%%%%%%%%%%%%%%%%%%%%%%%%%%%
%%%%%%%%%%%%%%%%%%%%%%%%%%%%%%%%
%%%%%%%%%%%%%%%%%%%%%%%%%%%%%%%%


\begin{thebibliography}{99}
\bibitem{AlberNW2019}Albert MH, Nowakowski RJ, Wolfe D (2019)
\textit{Lessons in Play}, 2nd edition, {Taylor and Francis}.


\bibitem{BahriK2010} Bahri S, Kruskal CP (2010) New Solutions for Synchronized Domineering, in \emph{Proc. of the Conference on Computers and Games 2010}, Kanazawa, pp. 211--229. 

\bibitem{BerleCG2001}
Berlekamp ER, Conway JH,  Guy RK (2001) \textit{Winning ways for your mathematical plays. {V}ol. 1}, A K Peters, Ltd.

\bibitem{CincoI} Cincotti A, Hiroyuki I (2008), The Game of Synchronized Domineering,  \textit{Computers and Games: 6th International Conference, CG 2008, Beijing, China, September 29 - October 1, 2008. Proceedings}, pp: 241--251.

\bibitem{FinkNSW2015}Fink A, Nowakowski RJ, Siegel AN, Wolfe D (2015) Toppling conjectures, in Nowakowski RJ (Ed.), {\it Games of No Chance 4}. MSRI Publications {\bf 63}, pp. 65--76, Cambridge Univ. Press.

\bibitem{Huggan} Huggan MA (2019) Studies in Alternating and Simultaneous Combinatorial Game Theory. PhD Thesis. 

\bibitem{LNS2016}  Larsson U, Neto JP, Nowakowski RJ, Santos CP (2016) Guaranteed scoring games, \textit{Electron. J. Combin.} \textbf{23} Paper 3.27, 29 pp.

%\bibitem{LNS2018a} Larsson U, Nowakowski RJ, Santos CP (2018) Game comparison through play, \textit{Theoret. Comput. Sci.,} \textbf{725} pp. 52?63.

%\bibitem{LNS2018b} Larsson U, Nowakowski RJ, Santos CP (2018) Games with guaranteed scores and waiting moves, \textit{Internat. J. Game Theory,} \textbf{47(2)}, pp: 653--671.

\bibitem{LNS2021} Larsson U, Nowakowski RJ, Santos CP (2022) Absolute Combinatorial Game Theory, to appear
in Huntemann S, Larsson U (Eds.), {\it Games of No Chance 6}. 


\bibitem{ItoSYI2016} Ito K, Sueishi T, Yamakawa Y, Ishikawa M (2016) Tracking and Recognition of Human Hand in Dynamic Motion for Janken (rock-paper-scissors) Robot. IEEE International Conference on Automation Science and Engineering (CASE).

\bibitem{Milno1953}
Milnor J (1953) Sums of positional games, \textit{Ann. of Math. Stud.} { \emph Contributions to the Theory of Games,  Kuhn HW, Tucker AW (Eds.), Princeton}, \textbf{2}, pp. {291-301}.


\bibitem{ShortHVS2010} Short E, Hart J, Vu M, Scassellati B (2010) No Fair!! An Interaction with a Cheating Robot. \emph{2010 5th ACM/IEEE International Conference on Human-Robot Interation (HRI)}.

\bibitem{Siege2013}Siegel AN (2013) \textit{ Combinatorial Game Theory}, American Math. Soc.

\bibitem{Siegel2015}
Siegel AN (2015) Mis\`ere canonical forms of partizan games, in Nowakowski RJ (Ed.), \textit{Games of No Chance 4}, MSRI Publications {\bf 63}, pp. 225--239. Cambridge Univ. Press.
\end{thebibliography}
\end{document}